\newcommand{\beq}{\begin{equation}}
	\newcommand{\eeq}{\end{equation}}
\newcommand{\ben}{\begin{eqnarray}}
	\newcommand{\een}{\end{eqnarray}}
\newcommand{\beno}{\begin{eqnarray*}}
	\newcommand{\eeno}{\end{eqnarray*}}
\newtheorem{thm}{Theorem}[section]
\newtheorem{defi}[thm]{Definition}
\newtheorem{lem}[thm]{Lemma}
\newtheorem{prop}[thm]{Proposition}
\newtheorem{coro}[thm]{Corollary}
\newtheorem{rmk}[thm]{Remark}
\def\XXint#1#2#3{{\setbox0=\hbox{$#1{#2#3}{\int}$ }
		\vcenter{\hbox{$#2#3$ }}\kern-.6\wd0}}
\title[Quantitative stratification]{Quantitative stratification for the  fractional Allen-Cahn equation and stationary nonlocal minimal surface}
\author[K. Wang]{Kelei Wang$^\dag$}
\address{$^\dag$School of Mathematics and Statistics \\ Wuhan University\\
	Wuhan 430072, China}
\email{wangkelei@whu.edu.cn}
\author[J. Wei]{Juncheng Wei$^\ast$}
\address{$^\ast$Department of Mathematics \\ Chinese University of Hong Kong\\
	Shatin, NT, Hong Kong}
\email{wei@math.cuhk.edu.hk}
\author[K. Wu]{Ke Wu$^\ast$}
\address{$^\ast$Department of Mathematics \\ Chinese University of Hong Kong\\
	Shatin, NT, Hong Kong}
\email{wke@math.cuhk.edu.cn}
\thanks{K. Wang is supported by  National Key R\&D Program of China (No. 2022YFA1005602) and the National Natural Science Foundation of China (No. 12131017 and No. 12221001).  J. Wei is partially supported  by  National Key R\&D Program of China (No. 2022YFA1005602), and Hong Kong General Research Fund "New frontiers in singular limits of nonlinear partial differential equations. K. Wu is supported by the  National Natural Science Foundation
of China ( No. 12401264). We sincerely thank Wei Wang for numerous valuable discussions.}
\keywords{Fractional Allen Cahn equation; stationary nonlocal minimal surfaces;  quantitative  stratification; volume estimate.}
\subjclass[2020]{35K58;35B44;35B45.}
\begin{document}

\begin{abstract}
We study properties of solutions to the fractional Allen-Cahn equation  when $s\in (0, 1/2)$ and dimension $n\geq 2$. By applying the quantitative stratification principle developed by Naber and Valtorta, we obtain an optimal quantitative estimate on the transition set. As an application of this estimate, we improve the potential energy estimates of Cabre, Cinti, and Serra \cite{Cabre-Cinti-Serra2021}, providing sharp versions for the fractional Allen-Cahn equation. Similarly, we obtain optimal perimeter estimates for stationary nonlocal minimal surfaces, extending previous results of Cinti, Serra, and Valdinoci \cite{Cinti-Serra-Valdinoci2019} from the stable case.

\end{abstract}

\maketitle

\tableofcontents


\section{Introduction}\label{sec introduction}
	\setcounter{equation}{0}
\subsection{Background} In this paper, we consider the fractional Allen-Cahn equation
\begin{equation}\label{fractional Allen Cahn equation}
(-\Delta)^{s}u_{\epsilon}+W'(u_{\epsilon})=0,\quad\text{in}~ B_{R}(0)
\end{equation}
subject to the exterior Dirichlet boundary condition
\begin{equation}\label{Dirichlet boundary condition}
u_{\epsilon}=g_{\epsilon},\quad\text{on}~\mathbb{R}^{n}\backslash B_{R}(0),
\end{equation}
where $n\geq 2, \epsilon\in (0, 1), B_{R}(0)=\{x\in\mathbb{R}^{n}:|x|\leq R\}$ and $g_{\epsilon}:\mathbb{R}^{n}\to\mathbb{R}$ is a given bounded smooth function. For $ s\in (0, 1/2)$,  the fractional Laplacian operator $(-\Delta)^{s}$ is defined by
\[(-\Delta)^{s}u(x)= \text{C.V.}\Big(\gamma_{n, s}\int_{\mathbb{R}^{n}}\frac{u(x)-u(y)}{|x-y|^{n+2s}}dy\Big)=
\lim\limits_{\epsilon\rightarrow 0}\gamma_{n, s}\int_{\mathbb{R}^{n}\backslash B_{\epsilon}(x)}\frac{u(x)-u(y)}{|x-y|^{n+2s}}dy,\]
where C.V. stands for the Cauchy principal value and
\[\gamma_{n,s}:=s2^{2s}\pi^{-\frac{n}{2}}\frac{\Gamma(\frac{n+2s}{2})}{\Gamma(1-s)}\]
is a normalization constant. Let
\[L_{s}(\mathbb{R}^{n})=\Big\{u\in L^{1}_{\text{loc}}(\mathbb{R}^{n})|\int_{\mathbb{R}^{n}}\frac{|u|}{1+|x|^{n+2s}}dx<\infty\Big\}.\]
It is well known that if  $u\in C^{2}(B_{R}(0))\cap L_{s}(\mathbb{R}^{n})$, then $(-\Delta)^{s}u$ is well defined.

Throughout the rest of discussions, we make use the following assumptions on the nonlinearity $W:\mathbb{R}\to [0, \infty)$:
\begin{itemize}
\item[(H1)] $W\in C^{2}(\mathbb{R}:[0, \infty))$.
\item[(H2)]$\{W=0\}=\{\pm1\}$ and $W''(\pm1)>0$.
\item[(H3)] There exist $p\in (1, \infty)$ and a constant $\mathbf{c}_{W}>0$ such that for all $t\in\mathbb{R}$,
\[\frac{1}{\mathbf{c}_{W}}(|t|^{p-1}-1)\leq |W'(t)|\leq \mathbf{c}_{W}(|t|^{p-1}+1).\]
\end{itemize}
Clearly, those set of assumptions are satisfied by the  prototypical double well potential $W(t)=(1-t^{2})^{2}/4$.

The Allen-Cahn energy functional associated to \eqref{fractional Allen Cahn equation} is
\begin{equation}\label{Euler Lagrange functional}
\mathcal{E}_{\epsilon}(u, B_{R}(0)):=\mathcal{E}(u, B_{R}(0))+\frac{1}{\epsilon^{2s}}\int_{B_{R}(0)}W(u)dx,
\end{equation}
where
\[\begin{aligned}
\mathcal{E}(u, B_{R}(0)):=&\frac{\gamma_{n, s}}{4}\iint_{B_{R}(0)\times B_{R}(0)}\frac{|u(x)-u(y)|^{2}}{|x-y|^{n+2s}}dxdy\\
&+\frac{\gamma_{n, s}}{2}\iint_{B_{R}(0)\times(\mathbb{R}^{n}\backslash B_{R}(0)}\frac{|u(x)-u(y)|^{2}}{|x-y|^{n+2s}}dxdy.
\end{aligned}\]
For a function $u$ in the Hilbert space
\[H^{s}(\mathbb{R}^{n}):=\Big\{u:\frac{\gamma_{n, s}}{2}\iint_{\mathbb{R}^{n}\times\mathbb{R}^{n}}\frac{|u(x)-u(y)|^{2}}{|x-y|^{n+2s}}dxdy+\int_{\mathbb{R}^{n}}u^{2}dx<\infty\Big\},\]
the energy functional \eqref{Euler Lagrange functional} is finite. In order to take the Dirichlet boundary condition \eqref{Dirichlet boundary condition} into account, one need to restrict to the class of functions given by the affine space $H_{g_{\epsilon}}^{s}(B_{R}(0)):=g_{\epsilon}+H^{s}_{00}(B_{R}(0))$ with
\[H^{s}_{00}(B_{R}(0)):=\{u:u\in H^{s}(\mathbb{R}^{n})~\text{and}~u=0~\text{almost everywhere in}~\mathbb{R}^{n}\backslash B_{R}(0)\}.\]
Note that $H^{s}_{00}(B_{R}(0))$ is also a Hilbert space. Using \eqref{Euler Lagrange functional}, we can define weak solutions of \eqref{fractional Allen Cahn equation} in the following way.
\begin{defi}[Weak solutions]\label{definition of weak solutions}
A function $u\in H_{g_{\epsilon}}^{s}(B_{R}(0))$ is a \textbf{\textit{weak solution}} of \eqref{fractional Allen Cahn equation} if $u$ is a  critical point of $\mathcal{E}_{\epsilon}(u, B_{R}(0))$ with respect to perturbations supported in $B_{R}(0)$.
\end{defi}
Among the class of weak solutions to \eqref{fractional Allen Cahn equation}, the energy minimizes of \eqref{Euler Lagrange functional} play a special role.  The link between asymptotic behavior of sequences of minimizes (see \cite[Definition 1.1]{Savin-Valdinoci2012}) to \eqref{Euler Lagrange functional} and the nonlocal minimal surfaces introduced by Caffarelli-Roquejoffre-Savin \cite{Caffarelli-Roquejoffre-Savin2010} is investigated by Savin and  Valdinoci \cite{Savin-Valdinoci2012} through a $\Gamma-$convergence analysis. Their main results show   that if $g_{\epsilon}\to g$ in $L^{1}_{loc}(\mathbb{R}^{n})$ for some function $g$ satisfying $|g|=1$ almost everywhere in $\mathbb{R}^{n}\backslash B_{R}(0)$, if   $u_{\epsilon}\in H_{g_{\epsilon}}^{s}(B_{R}(0))$ (for each $\epsilon\in(0, 1)$) is a energy minimizer of \eqref{Euler Lagrange functional} and
\begin{equation}\label{uniform energy bound}
\sup_{\epsilon\in (0,1)}\mathcal{E}_{\epsilon}(u_{\epsilon}, B_{R}(0))<\infty,
\end{equation}
then for any sequence $\epsilon_{j}\downarrow 0$, there exists a function $u_{\ast}\in H^{s}_{g}(B_{R}(0))$ such that (up to a subsequence) $u_{\epsilon_{j}}\to u_{\ast}$ strongly in $L^{1}_{loc}(\mathbb{R}^{n})$. Moreover, there exists a  a set $E_{\ast}\subset\mathbb{R}^{n}$ which is a minimizer of the  $2s-$perimeter such that $u_{\ast}=\chi_{E_{\ast}}-\chi_{\mathbb{R}^{n}\backslash E_{\ast}}$. Here for a  bounded smooth open set $\Omega\subset\mathbb{R}^{n}$, the $2s-$perimeter of a Borel set $E\subset\mathbb{R}^{n}$ relative to $\Omega$ is defined by
\begin{equation}\label{2s-perimeter}
\begin{aligned}
P_{2s}(E, \Omega):=&\int_{E\cap\Omega}\int_{E^{c}\cap \Omega}\frac{1}{|x-y|^{n+2s}}dxdy+\int_{E\cap \Omega}\int_{E^{c}\backslash\ \Omega}\frac{1}{|x-y|^{n+2s}}dxdy\\
&+\int_{E\backslash \Omega}\int_{E^{c}\cap\Omega}\frac{1}{|x-y|^{n+2s}}dxdy\end{aligned}
\end{equation}
with $E^{c}:=\mathbb{R}^{n}\backslash E$ being the complementary set of $E$. The minimality of $E_{\ast}$ implies the first variation of the $2s-$perimeter vanishes at $E_{\ast}$. This is equivalent to
\begin{equation}\label{First variation of 2s perimeter}
\delta P_{2s}\big(E_{\ast}, B_{R}(0)\big)[X]:=\Big [ \frac{d}{dt}P_{2s}\big(\phi_{t}(E_{\ast}), B_{R}(0)\big)\Big ]\Big|_{t=0}=0
\end{equation}
for any vector field $X\in C^{1}_{0}(B_{R}(0); \mathbb{R}^{n})$, where $\{\phi_{t}\}_{t\in\mathbb{R}}$ is the flow generated by $X$.

Later this convergence result was generalized by Millot, Sire and the first author  \cite{Millot-Sire-Wang2019}  to weak solutions (as defined in Definition \ref{definition of weak solutions}). Under the energy condition \eqref{uniform energy bound}, they proved that weak solutions $u_\epsilon$ still converges as in the above minimizing case, but now the limit is a    stationary nonlocal minimal surface, which is defined in the following way.
\begin{defi}[Stationary nonlocal minimal surfaces]
Let $\Omega\subset\mathbb{R}^{n}$ be a  bounded smooth open set. Let $E\subset\mathbb{R}^{n}$ be a Borel set satisfying $P_{2s}(E, \Omega)<\infty$ and
\[\delta P_{2s}(E, \Omega)[X]=0\]
for any vector field $X\in C^{1}_{0}(\Omega; \mathbb{R}^{n})$. Then $\partial E$ is referred as a \textbf{\textit{stationary nonlocal 2s-minimal surface}} in $\Omega$.
\end{defi}

\subsection{Main results} In this paper, we will follow the setting by Millot, Sire and the first author in \cite{Millot-Sire-Wang2019}, to investigate the regularity of stationary nonlocal minimal surfaces and the asymptotic behaviors of solutions to \eqref{fractional Allen Cahn equation}  when $\epsilon\to 0$. Our first result shows that the interface of a
phase transition behaves ``like a codimension one'' set.
\begin{thm}\label{thm volume estimate}
Let $\{g_{\epsilon}\}_{\epsilon\in (0, 1)}\subset C_{loc}^{0, 1}(\mathbb{R}^{n})$ be such that
\[\sup_{\epsilon\in(0, 1)}\|g_{\epsilon}\|_{L^{\infty}(\mathbb{R}^{n}\backslash B_{40}(0))}<\infty\]
and $g_{\epsilon}\to g$  strongly in $L_{loc}^{1}(\mathbb{R}^{n}\backslash B_{40}(0))$ for a function $g$ satisfying $|g|=1$ almost everywhere in $\mathbb{R}^{n}\backslash B_{40}(0)$.
For each $\epsilon\in (0, 1)$, let $u_{\epsilon}\in H^{s}_{g_{\epsilon}}(B_{40}(0))\cap L^{\infty}(\mathbb{R}^{n})$ be a weak solution of \eqref{fractional Allen Cahn equation} with $R=40$. Assume
\begin{equation}\label{uniform bounded assumption}
1\leq \sup_{\epsilon\in(0, 1)}\|u_{\epsilon}\|_{L^{\infty}(\mathbb{R}^{n})}=\Lambda_{0}<\infty
\end{equation}
and
\begin{equation}\label{uniform energy assumption}
\sup_{\epsilon\in(0, 1)}\mathcal{E}_{\epsilon}(u_{\epsilon}, B_{40}(0))=\Lambda_{1}<\infty.
\end{equation}
Given $0<\tau<1$, there exist  positive constants $\mathbf{k}_{\ast}, c$ depending only on $n, s, \tau, \Lambda_{0}$, $\Lambda_{1}, W$ such that for each $r\in (\mathbf{k}_{\ast}\epsilon, 1)$,
\begin{equation}\label{volume estimate}
\mathscr{L}^{n}\Big(\mathcal{J}_{r}\big(\{|u_{\epsilon}|<1-\tau\}\cap B_{1}(0)\big)\Big)\leq cr.
\end{equation}
Here $\mathcal{J}_{r}(A)$ represents the open tubular neighborhood of radius $r$ of a set $A$ and $\mathscr{L}^{n}$ is the Lebesgue measure on $\mathbb{R}^{n}$.
\end{thm}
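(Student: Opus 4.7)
The plan is to implement the Naber--Valtorta quantitative stratification machinery, adapted to the fractional Allen--Cahn setting through the Caffarelli--Silvestre extension and the monotonicity formula of \cite{Millot-Sire-Wang2019}.

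\textbf{Step 1 (Extension and monotone density).} First I would lift $u_\epsilon$ to its Caffarelli--Silvestre extension $U_\epsilon(x,y)$ on $\mathbb{R}^{n+1}_+$, which satisfies $\operatorname{div}(y^{1-2s}\nabla U_\epsilon)=0$ with the nonlinear Neumann boundary condition $-\lim_{y\to 0^+}y^{1-2s}\partial_y U_\epsilon = c_{n,s}\,\epsilon^{-2s}W'(u_\epsilon)$. The key object is the rescaled density
\[
\Theta_\epsilon(x,r):= r^{2s-n}\Bigl(\tfrac{1}{2}\!\int_{B_r^+(x)}\! y^{1-2s}|\nabla U_\epsilon|^2\,dxdy + \epsilon^{-2s}\!\int_{B_r(x)\cap\{y=0\}}\! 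W(u_\epsilon)\,dx\Bigr),
\]
which by the Millot--Sire--Wang monotonicity formula is almost nondecreasing in $r$, with an error that vanishes as $\epsilon/r\to 0$. Hence on scales $r\gg\epsilon$ the blow-ups of $U_\epsilon$ are homogeneous of degree zero and the usual stratification by symmetry dimension is available.

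\textbf{Step 2 (Quantitative symmetry and strata).} Following Cheeger--Naber, I call $x\in B_1(0)$ an $(k,\eta,r)$-symmetric point if there exists a degree-zero function $V$ on $\mathbb{R}^{n+1}_+$, invariant under translations along some $k$-plane, with $r^{-n-1}\int_{B_r^+(x)} y^{1-2s}|U_\epsilon-V|^2 < \eta$. The $k$-th quantitative stratum $\mathcal{S}^k_{\eta,r}$ is then the set of points which fail to be $(k+1,\eta,s)$-symmetric for every $s\in[r,1]$. The crucial input I would establish is a quantitative rigidity gap: there exist $\eta_0(n,s,\tau,W)>0$ and $\mathbf{k}_*$ such that if $r\geq \mathbf{k}_*\epsilon$ and $x\in B_1(0)\cap\{|u_\epsilon|<1-\tau\}$, then $x$ cannot be $(n,\eta_0,r)$-symmetric. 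This is proved by a contradiction/compactness argument: a sequence of putative counterexamples with $\eta_j\to 0$ and $r_j/\epsilon_j\to\infty$ would, after rescaling, converge in $L^1_{\mathrm{loc}}$ to a translation-invariant limit which, by the convergence result of \cite{Millot-Sire-Wang2019} and the classification of constant stationary $2s$-minimal boundaries, must be $\pm 1$, contradicting $|u_\epsilon(x)|<1-\tau$. Consequently $\{|u_\epsilon|<1-\tau\}\cap B_1(0)\subset \mathcal{S}^{n-1}_{\eta_0,r}$ whenever $r\geq \mathbf{k}_*\epsilon$.

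\textbf{Step 3 (Covering and Minkowski bound).} I would then apply the Naber--Valtorta discrete Reifenberg/covering theorem, whose hypotheses are verified from the approximate monotonicity of $\Theta_\epsilon$ together with the cone-splitting principle (at scale $r\gg\epsilon$ the presence of many independent non-symmetry directions forces a drop of the density, which can happen only boundedly many times by \eqref{uniform energy assumption}). This yields
\[
\mathscr{L}^{n}\bigl(\mathcal{J}_r(\mathcal{S}^{n-1}_{\eta_0,r})\cap B_1(0)\bigr)\leq C\,r,
\]
with $C=C(n,s,\tau,\Lambda_0,\Lambda_1,W)$. Combined with the inclusion from Step~2 this gives exactly \eqref{volume estimate}.

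\textbf{Main obstacle.} The heart of the argument is Step~2: balancing the scale threshold $\mathbf{k}_*$ against the symmetry tolerance $\eta_0$ so that the compactness argument can be run in spite of the $\epsilon$-dependent defect in the monotonicity formula, and extracting a strong enough limit (both for the extension $U_\epsilon$ and for the trace $u_\epsilon$) to invoke the rigidity for $n$-symmetric stationary configurations. A secondary technical point is carrying the Naber--Valtorta covering scheme through the weighted degenerate extension with a nonlinear Neumann boundary term; structurally this follows the classical setup once the approximate monotonicity and the cone-splitting lemma are in hand, but the bookkeeping is substantial.
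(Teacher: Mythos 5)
Your proposal follows essentially the same path as the paper: Caffarelli--Silvestre extension, the Millot--Sire--Wang monotonicity formula, a compactness-based quantitative rigidity gap placing the transition set in the codimension-one quantitative stratum, and the Naber--Valtorta stratification apparatus (the $L^2$ beta-number estimate from density drops, the cone-splitting/dichotomy, the corona-type good/bad-tree decomposition, and iteration to the full Minkowski bound). One small correction: the rescaled density $\mathbf{\Theta}^{\epsilon}_{U_\epsilon}(r,x_0)$, \emph{including} the potential term, is \emph{exactly} nondecreasing in $r$ with no $\epsilon$-dependent defect (Lemma~\ref{lem Allen-Cahn monotonicity formula}), so there is no ``almost-monotone'' error to manage in the blow-up and dichotomy arguments.
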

\begin{rmk}
We emphasize that \eqref{uniform energy assumption} is a technical assumption. In the special case that $u_{\epsilon}$ is a finite Morse index \footnote{The  definition of finite Morse index solutions can be found in \cite[Definition 1.14]{Caselli-Simon-Serra2024}.} solution of the equation
\begin{equation}\label{Allen-Cahn equation}
(-\Delta)^{s}u_{\epsilon}=\frac{1}{\epsilon^{2s}}(u_{\epsilon}-u_{\epsilon}^{3}),\quad\text{in}~\mathbb{R}^{n},
\end{equation}
  Cabr\'{e}-Cinti-Serra \cite{Cabre-Cinti-Serra2021} (for stable solutions) and Caselli-Simon-Serra \cite{Caselli-Simon-Serra2024} (for finite Morse index solutions) proved that the energy estimate \eqref{uniform energy assumption}  holds naturally. There are also a lot of results concerning energy estimates to solutions of \eqref{Allen-Cahn equation}, see for instance \cite{Cabre-Cinti2014, Cui-Li2020}.

On the other hand, there are  solutions which do not satisfy this assumption. An example is the scalings of the one dimensional periodic solutions to \eqref{Allen-Cahn equation} as constructed in Du-Gui \cite{Du-Gui2020}.
\end{rmk}
Theorem \ref{thm volume estimate} improves \cite[Theorem 7.1]{Millot-Sire-Wang2019} to the optimal estimate. For minimizers of the energy functional \eqref{Euler Lagrange functional}, the density estimates have been obtained by Savin and Valdinoci \cite[Theorem 1.4]{Savin-Valdinoci2014}. For the classical Allen-Cahn energy,
such density estimates were proved in \cite{Caffarelli-Cordoba1995} for the local minimizer of the functional.
Compared with the  results in \cite{Savin-Valdinoci2014} and \cite{Caffarelli-Cordoba1995}, Theorem \ref{thm volume estimate} is stronger in the sense that it gives a uniform estimate in a neighborhood of the transition set.

As a direct application of Theorem \ref{thm volume estimate}, we can prove a sharp quantitative convergence rate for  the potential energy.
\begin{prop}\label{pro potential estimate}
Under the same assumptions of Theorem \ref{thm volume estimate}, we have
\begin{equation}
\int_{B_{1}(0)} W(u_{\epsilon}) dx\leq \left\{\begin{array}{lll}c\epsilon^{4s},&\quad\text{if}~~0<s<\frac{1}{4},\\
c\epsilon|\log\epsilon|,&\quad\text{if}~~s=\frac{1}{4},\\
c\epsilon,&\quad\text{if}~~\frac{1}{4}<s<\frac{1}{2},
\end{array}
\right.\end{equation}
where $c$ is a positive constant depending only on $n, s, \Lambda_{0} , \Lambda_{1}, W$.
\end{prop}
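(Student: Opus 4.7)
The plan is a dyadic decomposition of $B_1(0)$ based on distance to the transition set, combining the volume estimate \eqref{volume estimate} of Theorem \ref{thm volume estimate} with the algebraic (order $2s$) decay of $u_\epsilon$ toward $\pm 1$ away from the transition set. Fix $\tau \in (0,1)$ and set $T_\epsilon := \{|u_\epsilon| < 1-\tau\} \cap B_1(0)$.

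On $T_\epsilon$, $W(u_\epsilon)$ is bounded by a constant depending only on $W$ and $\Lambda_0$. Theorem \ref{thm volume estimate} at scale $r = \mathbf{k}_\ast \epsilon$ yields $\mathscr{L}^n(T_\epsilon) \leq \mathscr{L}^n(\mathcal{J}_{\mathbf{k}_\ast\epsilon}(T_\epsilon) \cap B_1(0)) \leq C\epsilon$, so this region contributes at most $C\epsilon$ to the integral. For the complement $B_1(0) \setminus T_\epsilon$, the key pointwise input is
\[
1 - |u_\epsilon(x)| \leq C \Big(\frac{\epsilon}{d(x, T_\epsilon)}\Big)^{2s},
\]
which, combined with the Taylor expansion $W(t) \leq C(1-|t|)^2$ valid near $t = \pm 1$ (using $W''(\pm 1) > 0$), gives $W(u_\epsilon(x)) \leq C (\epsilon/d(x, T_\epsilon))^{4s}$. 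This algebraic decay of rate $2s$ mirrors the known asymptotics of the one-dimensional layer solution for $(-\Delta)^s u + W'(u) = 0$ in the regime $s \in (0,1/2)$, and establishing it in the present setting---through barrier arguments exploiting the polynomial tail of the fractional kernel together with the energy bound \eqref{uniform energy assumption}---is the principal technical obstacle; analogous pointwise estimates are part of the regularity theory developed in \cite{Millot-Sire-Wang2019}.

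With the pointwise bound in hand, for $k \geq 0$ introduce the dyadic annuli
\[
E_k := \bigl\{ x \in B_1(0) \setminus T_\epsilon : 2^k \mathbf{k}_\ast \epsilon \leq d(x, T_\epsilon) < 2^{k+1} \mathbf{k}_\ast \epsilon \bigr\},
\]
which are nonempty only for $0 \leq k \leq k_{\max}$ with $k_{\max} \sim \log_2(1/\epsilon)$. Since $E_k \subset \mathcal{J}_{2^{k+1}\mathbf{k}_\ast \epsilon}(T_\epsilon) \cap B_1(0)$, Theorem \ref{thm volume estimate} yields $\mathscr{L}^n(E_k) \leq C 2^k \epsilon$, while on $E_k$ the pointwise estimate gives $W(u_\epsilon) \leq C\, 2^{-4sk}$; hence $\int_{E_k} W(u_\epsilon)\,dx \leq C \epsilon\, 2^{(1-4s)k}$.

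Summing over $0 \leq k \leq k_{\max}$ then produces the three regimes of the proposition: when $s > 1/4$ the geometric series converges and the total is $O(\epsilon)$; when $s = 1/4$ the sum reduces to $O(k_{\max}) = O(|\log\epsilon|)$, giving $O(\epsilon|\log\epsilon|)$; and when $s < 1/4$ the sum is dominated by its last term $2^{(1-4s)k_{\max}} \sim \epsilon^{-(1-4s)}$, giving $O(\epsilon^{4s})$. Adding the $O(\epsilon)$ contribution from $T_\epsilon$, which is absorbed in every case, completes the argument.
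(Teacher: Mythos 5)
Your proposal is correct and follows essentially the same route as the paper: decompose $B_1(0)$ into annuli by distance to the transition set, apply the volume estimate of Theorem \ref{thm volume estimate} to bound each annulus's measure, use the pointwise decay $W(u_\epsilon)\lesssim(\epsilon/d)^{4s}$ away from the transition region (this is precisely \cite[Lemma 4.11]{Millot-Sire-Wang2019}, which the paper invokes directly rather than treating as an obstacle), and sum. The only cosmetic differences are that you use dyadic annuli while the paper uses a general geometric ratio $\nu$, and the paper splits off the degenerate case where the transition set is empty in $B_2(0)$ as a separate Case~1, which your far-region term handles implicitly.
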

\begin{rmk}
Let $w$ be the one-dimensional layer solution of \eqref{Allen-Cahn equation}.
For each $\epsilon\in (0, 1)$, we set $u_{\epsilon}(x)=w(\epsilon^{-1} (a\cdot x +b))$, then it follows from \cite[Theorem 2.7]{Caber-Sire2015} that $u_{\epsilon}$ satisfies the potential estimate in Proposition \ref{pro potential estimate}. Hence this potential estimate is sharp.
\end{rmk}
\begin{rmk}
Proposition \ref{pro potential estimate} is a refinement of \cite[Corollary 7.5]{Millot-Sire-Wang2019}.  In \cite[Proposition 6.2]{Cabre-Cinti-Serra2021}, Cabre-Cinti-Serra obtained  a similar potential estimate for stable solutions of \eqref{Allen-Cahn equation}. They proved that
\begin{equation}
\int_{B_{1}(0)} W(u_\epsilon) dx\leq c\epsilon^{\min\{4s, \frac{1+2s}{2}\}}.
\end{equation}
\end{rmk}
In the same spirit as Theorem \ref{thm volume estimate}, we can also prove an optimal perimeter estimate for the stationary nonlocal minimal surfaces.
\begin{thm}\label{the perimeter estimate}
Let $E_{\ast}\subset\mathbb{R}^{n}$ be a Borel set satisfying \[P_{2s}(E_{\ast}, B_{R}(0))=\Lambda_{0}<\infty\] and \eqref{First variation of 2s perimeter}. Then for any $R_{1}<R$, $\partial{E_{\ast}}$ is $(n-1)$-rectifiable (see \cite{simon1984lectures}) in $B_{R_{1}}(0)$. Moreover, there exists a positive constant $c$ depending on $n, s, R, R_{1}, \Lambda_{0}$ such that
\[\mathcal{H}^{n-1}(\partial{E_{\ast}}\cap B_{R_{1}}(0))\leq cR_{1}^{n-1},\]
where $\mathcal{H}^{n-1}$ is the $(n-1)$-dimensional Hausdorff measure.
\end{thm}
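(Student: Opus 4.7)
The plan is to apply the Naber--Valtorta quantitative stratification machinery directly to $\partial E_{\ast}$, in parallel with (and somewhat more transparent than) the Allen--Cahn setting of Theorem \ref{thm volume estimate}. The preparation consists of three nonlocal analogues of classical minimal-surface tools. First, using the extension characterisation of $P_{2s}$ due to Caffarelli, Roquejoffre and Savin, stationarity \eqref{First variation of 2s perimeter} yields a monotonicity formula: a suitably normalized Dirichlet energy $r\mapsto r^{2s-1}\mathbf{E}(E_{\ast},B_{r}(x))$ of the harmonic extension is monotone nondecreasing for every $x\in\partial E_{\ast}$. Second, monotonicity combined with standard $L^{1}_{\mathrm{loc}}$-compactness produces tangent $2s$-minimal stationary cones at every boundary point in $B_{R_{1}}(0)$. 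Third, any stationary $2s$-minimal cone in $\mathbb{R}^{n}$ invariant under translations along an $(n-1)$-plane must be a half-space bounded by a hyperplane; this is the nonlocal counterpart of the one-dimensional classification of minimal cones.

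With these ingredients in hand, I would introduce, for each $k\in\{0,\dots,n-1\}$ and $\eta,r>0$, the quantitative stratum
\[\mathcal{S}^{k}_{\eta,r}(E_{\ast}):=\bigl\{x\in\partial E_{\ast}\cap B_{R_{1}}(0):E_{\ast}\ \text{is not}\ (k+1,\eta,t,x)\text{-symmetric for any}\ t\in[r,1]\bigr\},\]
where $(k+1,\eta,t,x)$-symmetric means that there exists a $(k+1)$-symmetric stationary $2s$-minimal cone $C$ satisfying $t^{-n}\mathscr{L}^{n}\bigl((E_{\ast}\triangle(x+C))\cap B_{t}(x)\bigr)<\eta$. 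The Naber--Valtorta covering scheme then yields the Minkowski-type bound
\[\mathscr{L}^{n}\bigl(\mathcal{J}_{r}(\mathcal{S}^{n-1}_{\eta,r}(E_{\ast}))\bigr)\leq c\,r,\qquad r\in(0,R-R_{1}),\]
with $c=c(n,s,\eta,R,R_{1},\Lambda_{0})$.

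To finish, at any point $x\in(\partial E_{\ast}\cap B_{R_{1}})\setminus \mathcal{S}^{n-1}_{\eta,r}$ with $\eta$ sufficiently small, $E_{\ast}$ is $L^{1}$-close to a half-space at some scale $t\in[r,1]$, and the Caffarelli--Roquejoffre--Savin improvement-of-flatness / $\varepsilon$-regularity theorem forces $\partial E_{\ast}$ to be a smooth hypersurface in a neighbourhood of $x$; this portion is automatically $(n-1)$-rectifiable with locally controlled $\mathcal{H}^{n-1}$-measure. The remainder of $\partial E_{\ast}\cap B_{R_{1}}$ is contained in $\bigcap_{r>0}\mathcal{S}^{n-1}_{\eta,r}$, for which the Naber--Valtorta covering additionally delivers $(n-1)$-rectifiability. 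The Hausdorff bound $\mathcal{H}^{n-1}(\partial E_{\ast}\cap B_{R_{1}})\leq cR_{1}^{n-1}$ follows from the Minkowski estimate via a standard Vitali covering that converts $\mathscr{L}^{n}(\mathcal{J}_{r}(\cdot))/r$ into $\mathcal{H}^{n-1}(\cdot)$ as $r\to 0$.

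The principal obstacle is the coercivity estimate driving the Reifenberg-type iteration inside Naber--Valtorta: in the local setting this is the familiar bound $\int_{B_{r}}\mathrm{dist}(\cdot,L)^{2}\,d\mu\lesssim r^{n+1}(\Theta_{2r}-\Theta_{r})$ relating $L^{2}$-deviation from the best approximating plane to the monotonicity defect. In the nonlocal $2s$-perimeter setting, long-range interaction and tail terms must be carefully handled and absorbed into an analogous defect of $\mathbf{E}$; once this coercivity is in place, the rest is essentially a direct transcription of the Naber--Valtorta programme.
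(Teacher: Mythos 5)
Your overall template — a Naber--Valtorta quantitative stratification built on the monotonicity of the normalized Caffarelli--Silvestre extension energy, a compactness argument, and a Reifenberg-type iteration — is indeed the same skeleton the paper uses (its proof points to Lemma~\ref{lem Monotonicityformula}, the compactness theorem of Millot--Sire--Wang, and Theorem~\ref{theRec2}, and otherwise repeats the Allen--Cahn argument with $\partial E_\ast$ in place of the transition set). However, there is one substantive gap and one missing step.

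The gap is the final $\varepsilon$-regularity step. You split $\partial E_\ast$ into the part caught by $\mathcal{S}^{n-1}_{\eta,r}$ (handled by Naber--Valtorta) and a complementary ``flat'' part, to which you propose to apply the Caffarelli--Roquejoffre--Savin improvement-of-flatness theorem. But that theorem is established for \emph{minimizers} (and by extension for viscosity solutions/stable sets), not for merely stationary nonlocal minimal surfaces. Theorem~\ref{the perimeter estimate} makes no minimality or stability hypothesis, only the first-variation condition \eqref{First variation of 2s perimeter}; for such sets no $\varepsilon$-regularity theorem is available, and the paper explicitly advertises that its argument \emph{avoids} this ingredient, using ``only the monotonicity formula and the compactness for stationary nonlocal minimal surfaces.'' The fix is that the split is unnecessary: the Rectifiable-Reifenberg theorem (Theorem~\ref{theRec2}) applied to $\mathcal{H}^{n-1}\llcorner\partial E_\ast$ already yields $(n-1)$-rectifiability together with the uniform $\mathcal{H}^{n-1}$ bound, so one should run the covering argument on all of $\partial E_\ast\cap B_{R_1}$ at once, exactly as the paper does, and drop the appeal to CRS entirely. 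Your closing Vitali-covering step to convert the Minkowski bound into a Hausdorff bound is likewise superfluous once Theorem~\ref{theRec2} is used.

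The missing step is the coercivity/$\beta$-number estimate, which you correctly identify as ``the principal obstacle'' but then leave as a black box. That estimate is the actual technical content of the approach: it is precisely the content of Theorem~\ref{thm Beta number estimate} and Lemma~\ref{lem Not n-symmetry}, proved in the extension picture via the eigenvector characterization of the Jones $\beta$-numbers, the lower bound on directional derivatives coming from compactness and the $n$-symmetry rigidity (Remark~\ref{n-symmetric nonlocal minimal cone}), and the integral form of the monotonicity formula. (Incidentally, the paper invokes $n$-symmetry rigidity rather than the $(n-1)$-symmetric classification you cite; both are true, but the paper's route avoids needing any Bernstein-type statement.) Without that estimate there is no Reifenberg iteration, and asserting it can be ``absorbed into a defect of $\mathbf{E}$'' does not establish it. As written, your proposal therefore does not constitute a complete proof.
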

In particular, Theorem \ref{the perimeter estimate}  implies that stationary nonlocal minimal surfaces enjoy a quite strong regularity. This kind of phenomenon was first discovered by Cinti-Serra-Valdinoci \cite[Theorem 1.1]{Cinti-Serra-Valdinoci2019} when $E_{\ast}$ is a stable set (see \cite[Definition 1.6]{Cinti-Serra-Valdinoci2019})   for the nonlocal perimeter functional. In this case, they proved that $\chi_{E_{\ast}}$ is a function of bounded variation (see \cite[Definition 1.1]{Giusti1984})  with a universal perimeter estimate. (Then its reduced boundary is rectifiable by De Giorgi's rectifiability theorem.) Their proofs make crucial use of the stability property.  Instead, we can obtain these results using only the monotonicity formula and the compactness for stationary nonlocal minimal surfaces established in \cite[Theorem 6.7]{Millot-Sire-Wang2019}.

\subsection{The Dirichlet-to-Neumann Operator}

To consider the nonlocal problem, we will realize it as a local problem in $\mathbf{R}^{n+1}_{+}$ with a nonlinear Neumann boundary condition. More precisely, let $\mathcal{K}_{n,s}:\mathbb{R}_{+}^{n+1}\to[0, \infty)$
be defined by
\[\mathcal{K}_{n, s}(x,z):=\sigma_{n, s}\frac{z^{2s}}{|\mathbf{x}|^{n+2s}},\]
where $\mathbf{x}=(x, z)\in\mathbb{R}_{+}^{n+1}=\mathbb{R}^{n}\times (0, +\infty)$ and $\sigma_{n, s}$ is a positive constant such that for every $z>0$,
\[\int_{\mathbb{R}^{n}}\mathcal{K}_{n, s}(x, z)dx=1.\]
For a  function $u\in L_{s}(\mathbb{R}^{n})\cap L^{\infty}(\mathbb{R}^{n})$, we denote by $U$ its extension to $\mathbb{R}_{+}^{n+1}$ given by the convolution
\begin{equation}\label{fractional extension}
U(x, z)=\sigma_{n, s}\int_{\mathbb{R}^{n}}\frac{z^{2s}}{(|x-y|^{2}+z^{2})^{\frac{n+2s}{2}}}u(y)dy.
\end{equation}
Caffarelli and Silvestre \cite{Caffarelli-Silvestre2007} proved that $U$ satisfies the equation
\begin{equation}\label{extension equation}
 \left\{\begin{array}{lll}
{\rm div}(z^{a}\nabla U)=0,\qquad~~ &\text{in }\mathbb{R}^{n+1},\\
U(x, 0)=u,\qquad~~ &\text{on }\partial\mathbb{R}^{n+1},\\
-d_{s}\lim_{z\rightarrow 0}z^{a}\partial_{z}U(x, z)=(-\Delta)^{s}u,\qquad~~ &\text{on }\partial\mathbb{R}^{n+1},
\end{array}
\right.
\end{equation}
where $a=1-2s$ and $d_{s}$ is a normalization constant depending only on $s$.

We denote $\mathcal{B}_{R}(\mathbf{x})$ as the ball in $\mathbb{R}^{n+1}$ with radius $R$
and center $\mathbf{x}$, $\mathcal{B}_{R}^{+}(\mathbf{x})$ as $\mathcal{B}_{R}(\mathbf{x})\cap \mathbb{R}_{+}^{n+1}$. We also set $\partial'\mathcal{B}_{R}^{+}(\mathbf{x})=\partial \mathcal{B}_{R}^{+}(\mathbf{x})\cap \mathbb{R}_{+}^{n+1}$ and $\partial^{0}\mathcal{B}_{R}^{+}(\mathbf{x})=\partial \mathcal{B}_{R}^{+}(\mathbf{x})\cap \partial\mathbb{R}_{+}^{n+1}$. If there is no ambiguity, we will identity $\partial\mathbb{R}_{+}^{n+1}$ with $\mathbb{R}^{n}$, so $\partial^{0}\mathcal{B}_{R}^{+}(\mathbf{x})=
B_{R}(x)$. Let $\mathbf{0}$ be the origin in $\mathbb{R}^{n+1}$ and
\[L^{2}(\mathcal{B}_{R}^{+}(\mathbf{0}), z^{a}d\mathbf{x}):=\{U\in L_{loc}^{1}(\mathcal{B}_{R}(\mathbf{0})):|z|^{\frac{a}{2}}U\in L^{2}(\mathcal{B}_{R}(\mathbf{0}))\}\]
be the weighed $L^{2}-$ space normed by
\[\|U\|_{L^{2}(\mathcal{B}_{R}^{+}(\mathbf{0}), z^{a}d\mathbf{x})}^{2}:=\int_{\mathcal{B}_{R}^{+}(\mathbf{0})}z^{a}U^{2}d\mathbf{x}.\]
We also introduce the weighted Sobolev space
\[H^{1}(\mathcal{B}_{R}^{+}(\mathbf{0}), z^{a}d\mathbf{x}):=\{U\in L^{2}(\mathcal{B}_{R}^{+}(\mathbf{0}), z^{a}d\mathbf{z}):|\nabla U|\in L^{2}(\mathcal{B}_{R}^{+}(\mathbf{0}), z^{a}d\mathbf{z})\}\]
normed by
\[\|U\|_{H^{1}(\mathcal{B}_{R}^{+}(\mathbf{0}), z^{a}d\mathbf{x})}^{2}:=\int_{\mathcal{B}_{R}^{+}(\mathbf{0})}z^{a}U^{2}d\mathbf{x}+\int_{\mathcal{B}_{R}^{+}(\mathbf{0})}z^{a}|\nabla U|^{2}d\mathbf{x}.\]
For a function $U\in H^{1}(\mathcal{B}_{R}^{+}(\mathbf{0}), z^{a}d\mathbf{x})\cap L^{\infty}(B_{R}(0))$, the Dirichlet energy
\begin{equation}\label{H1 Energy quantity}
E(U, \mathcal{B}_{R}^{+}(\mathbf{0})):=\frac{d_{s}}{2}\int_{\mathcal{B}_{R}^{+}(\mathbf{0})}z^{a}|\nabla U|^{2}d\mathbf{x}
\end{equation}
is well defined. If $u_{\epsilon}\in H^{s}_{g_{\epsilon}}(B_{R}(0))\cap L^{\infty}(\mathbb{R}^{n})$ is a weak solution of \eqref{fractional Allen Cahn equation} and $U_{\epsilon}$ is the extension of $u_{\epsilon}$ on $\mathbb{R}^{n+1}_{+}$. Then
 \begin{equation}
d_{s}\int_{\mathcal{B}_{R}^{+}(\mathbf{0})}z^{a}\nabla U_{\epsilon}\cdot\nabla\phi d\mathbf{x}+\frac{1}{\epsilon^{2s}}\int_{B_{R}(0)}W'(u_{\epsilon})\phi dx=0
\end{equation}
for every $\phi\in H^{1}(\mathcal{B}_{R}^{+}(\mathbf{0}), z^{a}d\mathbf{x})\cap L^{p}(B_{R}(0))$ compactly supported in $\mathcal{B}_{R}^{+}(\mathbf{0})\cup B_{R}(0)$, where $1\leq p<\infty$. This implies $U_{\epsilon}$ is a critical point of the weighed Dirichlet energy
\begin{equation}\label{Energy quantity}
\textbf{E}_{\epsilon}(U, \mathcal{B}^{+}_{R}(\mathbf{x}_{0}))=\textbf{E}(U, \mathcal{B}^{+}_{R}(\mathbf{0}))+\frac{1}{\epsilon^{2s}}\int_{B_{R}(x_{0})}W(U(x, 0))dx.
\end{equation}
In other words, $U_{\epsilon}$ is a \textbf{\textit{weak solution}} of
\begin{equation}\label{Extension equation}
\left\{\begin{array}{lll}
\textrm{div}(z^{a}\nabla U_{\epsilon})=0, &\quad~\text{in}~\mathcal{B}_{R}^{+}(\mathbf{0}),\\
d_{s}\lim_{z\to 0}z^{a}\partial_{z}U_{\epsilon}=\frac{1}{\epsilon^{2s}}W'(u_{\epsilon}),&\quad~\text{on}~B_{R}(0).
\end{array}
\right.
\end{equation}

In the case that $E_{\ast}$ is a stationary nonlocal $2s-$minimal surface in $B_{R}(0)$, the extension of $u_{\ast}=\chi_{E_{\ast}}-\chi_{\mathbb{R}^{n}\backslash E_{\ast}}$ on $\mathbb{R}^{n+1}_{+}$ satisfies
\begin{equation}\label{eqn1}
\left\{\begin{array}{lll}
\textrm{div}(z^{a}\nabla U_{\ast})=0,&\quad\text{in}~\mathbb{R}^{n+1}_{+},\\
|U_{\ast}|\leq 1,&\quad\text{in}~\mathbb{R}^{n+1}_{+},\\
|U_{\ast}|=1,&\quad\text{on}~\partial\mathbb{R}^{n+1}_{+}
\end{array}
\right.
\end{equation}
In addition to \eqref{eqn1}, we know from \cite[Remark 2.15]{Millot-Sire-Wang2019} and \cite[Formula (6.2)]{Millot-Sire-Wang2019} that
\begin{equation}\label{eqn2}
\delta\textbf{E}\big(U_{\ast}, \mathcal{B}^{+}_{R}(\mathbf{0})\cup\partial^{0}\mathcal{B}^{+}_{R}(\mathbf{0})\big)[\mathbf{X}]:=\Big [\frac{d}{dt}\textbf{E}(U_{\ast}\circ\Phi_{-t}, \mathcal{B}^{+}_{R}(\mathbf{0}))\Big]\Big|_{t=0}=0
\end{equation}
for each  vector field  $\mathbf{X}=(X, X_{n+1})\in C^{1}(\overline{\mathcal{B}^{+}_{R}(\mathbf{0})}; \mathbb{R}^{n+1})$ compactly supported in $\mathcal{B}^{+}_{R}(\mathbf{0})\cup\partial^{0}\mathcal{B}^{+}_{R}(\mathbf{0})$ and satisfying $X_{n+1}=0$ on $\partial^{0}\mathcal{B}^{+}_{R}(\mathbf{0})$,
where $\{\Phi_{t}\}_{t\in\mathbb{R}}$ denotes the flow on $\mathbb{R}^{n+1}$ generated by $\mathbf{X}$.

Using the notations in this subsection,  Theorem \ref{thm volume estimate} will follow from the following covering result.
\begin{thm}\label{themain1}
 For each $\epsilon\in (0, 1)$, let $U_{\epsilon}\in H^{1}(\mathcal{B}_{40}^{+}(\mathbf{0}), z^{a}d\mathbf{x})\cap L^{\infty}(\mathcal{B}_{40}^{+}(\mathbf{0}))$ be a weak solution of \eqref{extension equation}. Assume
\begin{equation}\label{uniform bound assumption'}
1\leq \sup_{\epsilon\in(0, 1)}\|U_{\epsilon}\|_{L^{\infty}(\mathcal{B}_{40}^{+}(\mathbf{0}))}\leq \Lambda_{0}<\infty
\end{equation}
and
\begin{equation}\label{uniform energy assumption'}
\sup_{\epsilon\in(0, 1)}\textbf{E}_{\epsilon}(U_{\epsilon}, \mathcal{B}_{40}^{+}(\mathbf{0}))=\Lambda_{1}<\infty.
\end{equation}
Given $0<\tau<1$, there exist  positive constants $\mathbf{k}, c$ depending only on $n, s, \tau,\Lambda_{0}$, $\Lambda_{1}, W$ such that the following holds.

If $\mathbf{k}\epsilon<1$ and $r\in (\mathbf{k}\epsilon, 1)$, we can find a collection of balls $\{B_{r}(x_{i})\}_{i}$ which satisfy
\[\big\{|U_{\epsilon}(x, 0)|\leq 1-\tau\big\}\cap B_{1}(0)\subset\mathop{\cup}_{i}B_{r}(x_{i})\]
and
\[\#\{x_{i}\}_{i}\leq c(n , s, \tau, \Lambda_{0}, \Lambda_{1}, W)r^{1-n},\]
where $\#\{x_{i}\}_{i}$ is the number of the points $x_{i}$. In particular,  we have
\[\mathscr{L}^{n}\Big(\mathcal{J}_{r}\big(\{|u_{\epsilon}|\leq 1-\tau\}\cap B_{1}(0)\big)\Big)\leq c(n ,s, \tau, \Lambda_{0}, \Lambda_{1}, W)r.\]
\end{thm}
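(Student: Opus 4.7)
\textbf{Proof proposal for Theorem \ref{themain1}.} The plan is to implement the Naber--Valtorta quantitative stratification scheme, applied to the extension problem \eqref{Extension equation}. The two structural inputs are an almost-monotonicity formula for the scale-invariant density associated with $\textbf{E}_\epsilon$ and the strong compactness theorem \cite[Theorem 6.7]{Millot-Sire-Wang2019}, both of which are already available in the setting of $U_\epsilon$ weak solutions. Throughout, let $\theta_\epsilon(\mathbf{x}, r)$ denote a suitable normalization of $r^{1-n}\textbf{E}_\epsilon(U_\epsilon, \mathcal{B}^+_r(\mathbf{x}))$ which is nondecreasing in $r$ modulo a controlled remainder; the energy bound \eqref{uniform energy assumption'} gives a uniform upper bound $\theta_\epsilon(\mathbf{x}, r)\le\Lambda_2$ for $\mathbf{x}\in\partial^0\mathcal{B}_1^+$ and $r\in(0,10)$.

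\emph{Quantitative symmetry and pinching.} Declare $U_\epsilon$ to be \emph{$(k,\eta)$-symmetric at $(\mathbf{x},r)$} if the rescaling $U_\epsilon(\mathbf{x}+r\,\cdot)$ is within $\eta$ in $L^2(\mathcal{B}_1^+, z^a d\mathbf{x})$ of a function that is $0$-homogeneous in $\mathbb{R}_+^{n+1}$, invariant under translations in $k$ independent tangential directions, and takes values $\pm 1$ on $\partial\mathbb{R}_+^{n+1}$. The crucial pinching lemma states: given $\eta>0$, there exists $\delta=\delta(\eta,n,s,\Lambda_0,\Lambda_1,W)>0$ such that whenever $r>\mathbf{k}\epsilon$ and $\theta_\epsilon(\mathbf{x},10r)-\theta_\epsilon(\mathbf{x},r)<\delta$, the solution $U_\epsilon$ is $(0,\eta)$-symmetric at $(\mathbf{x},r)$. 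This is proved by a contradiction-and-compactness argument: if a sequence of counterexamples $\{U_{\epsilon_j}\}$ existed with $r_j/\epsilon_j\to\infty$, the rescalings would converge (via \cite[Theorem 6.7]{Millot-Sire-Wang2019}) to a stationary nonlocal $2s$-minimal surface whose density is constant on $[r,10r]$, forcing it to be a cone by the classical monotonicity-rigidity of stationary nonlocal minimal surfaces.

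\emph{Covering the singular stratum and $\epsilon$-regularity.} Define the quantitative stratum
\[
\mathcal{S}^{n-2}_{\eta,r}:=\bigl\{\mathbf{x}\in\partial^0\mathcal{B}_1^+:U_\epsilon\text{ is not }(n-1,\eta)\text{-symmetric at }(\mathbf{x},s)\text{ for any }s\in[r,1]\bigr\}.
\]
Combining the pinching lemma with the standard cone-splitting principle (two $(0,\eta)$-symmetries at linearly independent centers upgrade to higher-order symmetry), the Naber--Valtorta covering theorem produces a cover of $\mathcal{S}^{n-2}_{\eta,r}\cap B_1$ by $\le C(n,s,\eta,\Lambda_0,\Lambda_1,W)\,r^{2-n}$ balls of radius $r$, which is dominated by $r^{1-n}$ for $r<1$. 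At points of the transition set outside $\mathcal{S}^{n-2}_{\eta_0,r}$ there is some scale $s\in[r,1]$ at which $U_\epsilon$ is $(n-1,\eta_0)$-symmetric. An $\epsilon$-regularity step upgrades this $L^2$-closeness to a one-dimensional layer $w(\epsilon^{-1}\nu\cdot(x-x_0))$, using elliptic regularity for the degenerate extension equation and the non-degenerate sharp transition width $O(\epsilon)$ of $w$. Since $r>\mathbf{k}\epsilon$ with $\mathbf{k}$ chosen large depending on $\tau$, the set $\{|U_\epsilon(\cdot,0)|\le1-\tau\}\cap B_{s/2}(\mathbf{x})$ lies in a tubular neighborhood of thickness $\ll r$ around the hyperplane $\{\nu\cdot(x-x_0)=0\}$, hence is covered by $\le C(s/r)^{n-1}$ balls of radius $r$.

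\emph{Conclusion.} Decompose the transition set as $\mathcal{T}=\mathcal{T}_1\cup\mathcal{T}_2$ with $\mathcal{T}_1:=\mathcal{T}\cap\mathcal{S}^{n-2}_{\eta_0,r}$ and $\mathcal{T}_2:=\mathcal{T}\setminus\mathcal{S}^{n-2}_{\eta_0,r}$. Cover $\mathcal{T}_1$ by Naber--Valtorta ($\lesssim r^{2-n}$ balls) and $\mathcal{T}_2$ by a Vitali selection $\{B_{s_i/5}(\mathbf{x}_i)\}$ of symmetric-scale balls, inside each of which the $\epsilon$-regularity step yields $\lesssim(s_i/r)^{n-1}$ covering balls; summing using $\sum_i s_i^{n-1}\lesssim 1$ from the area comparison for the hyperplanes, the total is $\lesssim r^{1-n}$. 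The volume estimate in Theorem \ref{thm volume estimate} then follows from $\mathscr{L}^n(\mathcal{J}_r(\mathcal{T}))\le C\#\{x_i\}\,r^n\lesssim r$.

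\emph{Main obstacle.} The delicate point is the pinching lemma at the threshold $r\sim\mathbf{k}\epsilon$: one must guarantee that rescaled blow-ups lose the potential contribution $\epsilon^{-2s}\int W(u_\epsilon)$ and converge to a stationary nonlocal minimal surface rather than to an Allen--Cahn solution on $\mathbb{R}^n$. Choosing $\mathbf{k}$ large enough so that this holds uniformly across all admissible scales, while keeping the symmetry and $\epsilon$-regularity quantifiers consistent throughout the iteration, is the principal technical challenge.
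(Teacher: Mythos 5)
Your proposal follows the \emph{spirit} of quantitative stratification but diverges from the paper's actual argument in a way that leaves two essential steps unaccounted for.

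The paper does not define or estimate a quantitative stratum $\mathcal{S}^{n-2}_{\eta,r}$ and does not split the transition set into a ``singular'' piece and a ``regular'' piece. Instead, it covers the whole transition set $\mathcal{V}^\epsilon$ directly: Theorem \ref{thm Beta number estimate} gives a $\beta^{n-1}_{\mu,2}$-number estimate at every transition point under the assumption of approximate $0$-symmetry, Lemma \ref{key packing} converts this into an $r^{1-n}$-type packing bound via Discrete Reifenberg, Theorem \ref{thm Allen-Cahn Dichotomy} provides the dichotomy that drives the induction, and Section 8 builds the alternating good/bad corona decomposition culminating in Theorem \ref{the4.1}. The final proof iterates Theorem \ref{the4.1} so that each iteration reduces the energy density threshold by $\eta/2$, and after finitely many iterations all balls have radius exactly $r$. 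No $\epsilon$-regularity theorem for near-one-dimensional profiles is needed anywhere. Also, a small but telling inaccuracy: the density $\mathbf{\Theta}^\epsilon_{U_\epsilon}$ is \emph{exactly} monotone (Lemma \ref{lem Allen-Cahn monotonicity formula}), not merely ``nondecreasing modulo a controlled remainder.''

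Two steps in your proposal are asserted rather than proved, and both are substantial. First, the Vitali summation $\sum_i s_i^{n-1}\lesssim 1$ for the good-scale balls covering $\mathcal{T}_2$ is not a consequence of ``area comparison for the hyperplanes'': the transition set is an open set of full dimension, not an $(n-1)$-rectifiable set carrying $\mathcal{H}^{n-1}$ mass, so there is no ready-made area to compare. Controlling exactly this sum is what the corona decomposition accomplishes (it is the stop-ball packing bound in Theorems \ref{thegoodtree}, \ref{thebadtree} and \ref{thealttree}), and it is the technical heart of the matter, not a footnote. Second, the asserted $\epsilon$-regularity step---that $(n-1,\eta_0)$-symmetry in $L^2(z^a d\mathbf{x})$ upgrades to an actual one-dimensional layer $w(\epsilon^{-1}\nu\cdot(x-x_0))$ with transition width $O(\epsilon)$---does not appear in the paper and would require a separate, nontrivial regularity theorem for the degenerate extension problem. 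Without it, your cover of $\mathcal{T}_2$ by $\lesssim(s_i/r)^{n-1}$ balls is not justified. Additionally, covering $\mathcal{S}^{n-2}_{\eta,r}$ by $\lesssim r^{2-n}$ balls requires a $\beta^{n-2}_{\mu,2}$-number estimate for points in the stratum, a distinct estimate from the $\beta^{n-1}$ bound the paper actually uses, and one you neither state nor prove. In short, the two-tier decomposition is a legitimate alternative strategy in principle, but as written it trades the paper's self-contained corona construction for two unproved inputs that are at least as hard as what they are replacing.
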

\begin{rmk}
In Theorem \ref{themain1}, the assumption $\mathbf{k}\epsilon<r$ is essential.  But the restriction that $r<1$ can be removed. Indeed, if $r\geq 1$, we can always find a collection of balls satisfying Theorem \ref{themain1}, hence Theorem \ref{themain1} is trivial.
\end{rmk}
In order to establish Theorem \ref{themain1}, we will adopt the quantitative stratification technique developed by Naber-Valtorta in  \cite{Naber-Valtorta2017}.  This technique is  extremely useful and has found many  applications in recent years, see for instance \cite{Edelen-Engelstein2019, Sire2025, DeLellis2018, WangYu, Hirsch, Fu-Wang-Zhang2024, Wang-Zhang2024}.

 However, arguments applied to the study of harmonic maps in \cite{Naber-Valtorta2017} cannot be directly used here due
to several inherent difficulties in this model.
Indeed, if $g_{\epsilon}$ and $u_{\epsilon}$ satisfies the assumptions in Theorem \ref{thm volume estimate}, then $u_{\epsilon}\in C^{\alpha}(B_{R}(0))$ for some $\alpha\in (0, 1)$. Therefore, we can not define singular set and strata as in \cite{Naber-Valtorta2017}. In our proof, the transition set $\{|u_{\epsilon}|\leq 1-\tau\}$ will play the role of singular set and the parameter $\epsilon$ will be served as the threshold scale.

This paper will be organized as follows. In Section 2 and Section 3, we collect some preliminary results. In Section 4, we recall two Reifenberg type theorems which will be used as black boxes. In Section 5, we establish the relation between Reifenberg-type theorems and the monotonicity formula. In Section 6, we prove that if we have a collection of disjoint balls with small energy drop at the center, then we can use the Reifenberg-type theorems to obtain packing estimates. In Section 7, we prove a crucial dichotomy. In Section 8, we combine the results obtained in the previous sections to construct a corona-type decomposition. In Section 9 and Section 10, we give the proof of the main results.

\textbf{Notations.}
\begin{itemize}
\item If $\mu$ is a locally finite Borel measure on $\mathbb{R}^{n}$ and $S$ is a Borel set, we use $\mu \llcorner S$ to denote the restriction of $\mu$ on $S$.
\item If $\mu$ is a  Borel measure on $\mathbb{R}^{n}$, then $\textrm {spt}\mu$ is the support of $\mu$.
\end{itemize}

\section{Preliminaries}
\setcounter{equation}{0}
Given  $\mathbf{x}_{0}=(x_{0}, 0)\in\partial^{0}\mathcal{B}^{+}_{R}(\mathbf{0})$ and $r>0$ such that $\mathcal{B}_{r}^{+}(\mathbf{x}_{0})\subset \mathcal{B}^{+}_{R}(0)$. For each $U\in H^{1}(\mathcal{B}_{R}^{+}(\mathbf{0}), z^{a}d\mathbf{x})$, we set
\begin{equation}\label{Monotone quantity}
\mathbf{\Theta}_{U}^{\epsilon}(r, x_{0})=\frac{1}{r^{n-2s}}\textbf{E}_{\epsilon}(U, \mathcal{B}^{+}_{r}(\mathbf{x}_{0})),~~ \mathbf{\Theta}_{U}(r, x_{0}):=\frac{1}{r^{n-2s}}\textbf{E}(U, \mathcal{B}_{r}^{+}(\mathbf{x}_{0}))
\end{equation}

We begin with the following fundamental monotonicity formula, see \cite[Lemma 4.2]{Millot-Sire-Wang2019}. (Other results in this section can also be found in this paper.)
\begin{lem}\label{lem Allen-Cahn monotonicity formula}
If $U_{\epsilon}\in H^{1}(\mathcal{B}^{+}_{R}(\mathbf{0}), z^{a}d\mathbf{x})\cap L^{\infty}(\mathcal{B}^{+}_{R}(\mathbf{0}))$ is a weak solution of \eqref{extension equation}, then for any $\rho<r$,
\begin{equation}\label{Allen-Cahn monotonicity formula}
\begin{aligned}
\mathbf{\Theta}_{U_{\epsilon}}^{\epsilon}(r, x_{0})-\mathbf{\Theta}_{U_{\epsilon}}^{\epsilon}(\rho, x_{0})=&d_{s}\int_{\mathcal{B}_{r}^{+}(\mathbf{x}_{0})\backslash \mathcal{B}_{\rho}^{+}(\mathbf{x}_{0})}z^{a}\frac{|(\mathbf{x}-\mathbf{x}_{0})\cdot\nabla U_{\epsilon}|^{2}}{|\mathbf{x}-\mathbf{x}_{0}|^{n+2-2s}}d\mathbf{x}\\
&+\frac{2s}{\epsilon^{2s}}r^{-n+2s-1}\int_{B_{r}(x_{0}\backslash B_{\rho}(x_{0})}W(u_{\epsilon})dx.
\end{aligned}
\end{equation}
In particular, $\mathbf{\Theta}_{U_{\epsilon}}^{\epsilon}(r, x_{0})$ is  increasing in $r$.
\end{lem}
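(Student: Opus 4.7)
The plan is to establish the monotonicity by first deriving its differential version and then integrating from $\rho$ to $r$. Setting $E(r):=\textbf{E}_\epsilon(U_\epsilon,\mathcal{B}_r^+(\mathbf{x}_0))$, the coarea formula gives
\[
E'(r)=\frac{d_s}{2}\int_{\partial'\mathcal{B}_r^+(\mathbf{x}_0)}z^a|\nabla U_\epsilon|^2\,d\sigma+\frac{1}{\epsilon^{2s}}\int_{\partial B_r(x_0)}W(u_\epsilon)\,d\mathcal{H}^{n-1},
\]
so that $\tfrac{d}{dr}\mathbf{\Theta}^\epsilon_{U_\epsilon}(r,x_0)=r^{-(n-2s)}\bigl(E'(r)-(n-2s)r^{-1}E(r)\bigr)$. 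It therefore suffices to produce a Pohozaev-type identity that rewrites $E'(r)-(n-2s)r^{-1}E(r)$ as the nonnegative sum appearing on the right of \eqref{Allen-Cahn monotonicity formula}.

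The key computation is to multiply the bulk equation $\operatorname{div}(z^a\nabla U_\epsilon)=0$ by the radial multiplier $(\mathbf{x}-\mathbf{x}_0)\cdot\nabla U_\epsilon$ and integrate over $\mathcal{B}_r^+(\mathbf{x}_0)$. Two integrations by parts are then used: one to move the divergence across, producing a spherical boundary term on $\partial'\mathcal{B}_r^+$ together with a contribution at $\partial^0\mathcal{B}_r^+$ that is handled by the Neumann condition in \eqref{Extension equation}; and a second to rewrite $z^a\nabla U_\epsilon\cdot\nabla\bigl((\mathbf{x}-\mathbf{x}_0)\cdot\nabla U_\epsilon\bigr)$ as $-\tfrac{n-2s}{2}z^a|\nabla U_\epsilon|^2+\tfrac{1}{2}\operatorname{div}\bigl((\mathbf{x}-\mathbf{x}_0)z^a|\nabla U_\epsilon|^2\bigr)$, using that $(\mathbf{x}-\mathbf{x}_0)\cdot\nabla(z^a)=az^a$ with $a=1-2s$ since $\mathbf{x}_0\in\partial\mathbb{R}^{n+1}_+$. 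The $\partial^0$-contribution takes the form $-\epsilon^{-2s}\int_{B_r(x_0)}W'(u_\epsilon)(x-x_0)\cdot\nabla u_\epsilon\,dx$, which I would rewrite via $(x-x_0)\cdot\nabla W(u_\epsilon)=\operatorname{div}((x-x_0)W(u_\epsilon))-nW(u_\epsilon)$ as $-\epsilon^{-2s}\bigl(r\int_{\partial B_r}W(u_\epsilon)\,d\mathcal{H}^{n-1}-n\int_{B_r}W(u_\epsilon)\,dx\bigr)$.

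Assembling the two identities and comparing with $E'(r)$, the spherical energy terms cancel and the volume terms combine to give the clean differential monotonicity
\[
\frac{d}{dr}\mathbf{\Theta}^\epsilon_{U_\epsilon}(r,x_0)=\frac{d_s}{r^{n-2s+2}}\int_{\partial'\mathcal{B}_r^+(\mathbf{x}_0)}z^a|(\mathbf{x}-\mathbf{x}_0)\cdot\nabla U_\epsilon|^2\,d\sigma+\frac{2s}{r^{n-2s+1}\epsilon^{2s}}\int_{B_r(x_0)}W(u_\epsilon)\,dx,
\]
whose right-hand side is manifestly nonnegative. Integrating in $r$ over $[\rho,r]$ and applying coarea to the first term produces the annular bulk integral in \eqref{Allen-Cahn monotonicity formula}; the potential term integrates directly (and, if desired, may be combined by Fubini into a single annular expression).

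The main technical obstacle I anticipate is rigorously justifying the Pohozaev multiplier $(\mathbf{x}-\mathbf{x}_0)\cdot\nabla U_\epsilon$ in the weak formulation, since $U_\epsilon$ is merely in $H^1(\mathcal{B}_R^+,z^a d\mathbf{x})\cap L^\infty$ and has no a priori second derivatives. The standard remedy is either to perform an inner variation (testing the weak formulation against the generator of a cutoff radial flow and differentiating at $t=0$), or to mollify $U_\epsilon$ in the tangential variables on $\mathcal{B}_r^+\cap\{z>\delta\}$, carry out the identity there, and then let $\delta\downarrow 0$; the degenerate weight $z^a$ with $a\in(0,1)$ together with the interior $C^{1,\alpha}$ regularity available for the extension equation ensures that all boundary contributions pass to the limit, and that the coarea slicing $r\mapsto\int_{\partial'\mathcal{B}_r^+}z^a|\nabla U_\epsilon|^2$ is defined for a.e.\ $r$, which is all that is needed for the monotonicity in integrated form.
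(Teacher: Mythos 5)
Your derivation is correct and follows the standard domain--variation (Pohozaev) argument that underlies the cited reference \cite[Lemma~4.2]{Millot-Sire-Wang2019}, which is the proof the paper defers to: multiplying $\operatorname{div}(z^a\nabla U_\epsilon)=0$ by $(\mathbf{x}-\mathbf{x}_0)\cdot\nabla U_\epsilon$, handling $\partial^0$ via the Neumann condition, using $(\mathbf{x}-\mathbf{x}_0)\cdot\nabla(z^a)=az^a$ with $n-1+a=n-2s$, and combining with $E'(r)-\tfrac{n-2s}{r}E(r)$ indeed yields exactly your differential identity
\[
\frac{d}{dr}\mathbf{\Theta}^\epsilon_{U_\epsilon}(r,x_0)=\frac{d_s}{r^{n-2s+2}}\int_{\partial'\mathcal{B}_r^+(\mathbf{x}_0)}z^a|(\mathbf{x}-\mathbf{x}_0)\cdot\nabla U_\epsilon|^2\,d\sigma+\frac{2s}{r^{n-2s+1}\epsilon^{2s}}\int_{B_r(x_0)}W(u_\epsilon)\,dx,
\]
and the mollification/inner-variation remark is the correct way to justify the multiplier at the available regularity. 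One cautionary remark: integrating in $r$ gives the potential contribution $\frac{2s}{\epsilon^{2s}}\int_\rho^r t^{2s-n-1}\bigl(\int_{B_t(x_0)}W(u_\epsilon)\,dx\bigr)\,dt$, not the single annular term with the fixed prefactor $r^{-n+2s-1}$ printed in \eqref{Allen-Cahn monotonicity formula}; the latter appears to be a typographical garbling of the formula in the reference, and your version is the correct one (both are nonnegative, so the monotonicity conclusion is unaffected).
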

The next theorem deals with the compactness  for weak solutions to \eqref{extension equation}.
\begin{thm}[Compactness]\label{convergence}
    For a given sequence $\epsilon_{j}\downarrow 0$. For each $j\in\mathbb{N}$, let $U_{\epsilon_{j}}\in H^{1}(\mathcal{B}^{+}_{R}(\mathbf{0}), z^{a}d\mathbf{x})\cap L^{\infty}(\mathcal{B}^{+}_{R}(\mathbf{0}))$ be a weak solution of \eqref{extension equation}. If
    \[\sup_{j}\textbf{E}_{\epsilon_{j}}(U_{\epsilon_{j}}, \mathcal{B}^{+}_{R}(\mathbf{x}_{0}))\leq \Lambda,\]
where $\Lambda$ is a fixed positive constant. Then there exist a (not relabeled) subsequence, a function $U_{\ast}\in H^{1}(\mathcal{B}_{R}^{+}(\mathbf{0}), z^{a}d\mathbf{x})$ and an open set $E_{\ast}\subset\mathbb{R}^{n}$ of finite $2s-$ perimeter in $B_{R}(0)$ such that the following holds.
\begin{itemize}
\item[(i)] $U_{\ast}(x, 0)=\chi_{E_{\ast}}-\chi_{\mathbb{R}^{n}\backslash E_{\ast}}$ in $B_{R}(0)$ and $U_{\ast}$ is a solution of \eqref{eqn1} satisfying \eqref{eqn2},
\item[(ii)] $U_{\epsilon_{j}}\to U_{\ast}$
strongly in $H^{1}(\mathcal{B}_{R}^{+}(\mathbf{0})\cup B_{R}(0), z^{a}d\mathbf{x})$ as $j\to\infty$,
\item[(iii)] $U_{\epsilon_{j}}(x, 0)\to \chi_{E_{\ast}}-\chi_{\mathbb{R}^{n}\backslash E_{\ast}} $ in $C^{0}_{loc}(B_{R}(0)\backslash\partial E_{\ast})$,
\item[(iv)]$\epsilon_{j}^{-2s}W(U_{\epsilon_{j}}(x, 0))\to 0$ strongly in $L^{1}(B_{R}(0))$.
\end{itemize}
\end{thm}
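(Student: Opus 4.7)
My plan is to proceed in four stages: extract a weakly convergent subsequence, identify the trace of the limit as a $\pm 1$-valued function, upgrade weak to strong $H^{1}$ convergence together with vanishing of the rescaled potential, and finally pass to the limit in the first variation identity to obtain \eqref{eqn2}.

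First I would exploit the uniform bounds \eqref{uniform energy assumption'} and \eqref{uniform bound assumption'} to conclude that $\{U_{\epsilon_{j}}\}$ is bounded in the weighted Sobolev space $H^{1}(\mathcal{B}^{+}_{R}(\mathbf{0}), z^{a}d\mathbf{x})$. Since $z^{a}$ is an $A_{2}$-Muckenhoupt weight, this space is reflexive and admits a Rellich-type compact embedding into $L^{2}_{loc}(z^{a}d\mathbf{x})$; the fractional trace theory furthermore provides strong $L^{q}_{loc}(B_{R})$ convergence of the traces $u_{\epsilon_{j}}(\cdot,0)$ for every $q<\infty$. After passing to a subsequence I get $U_{\epsilon_{j}}\rightharpoonup U_{\ast}$ weakly and $u_{\epsilon_{j}}(\cdot,0)\to u_{\ast}(\cdot,0)$ strongly. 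The potential control $\int_{B_{R}(0)}W(u_{\epsilon_{j}})\,dx\leq \Lambda\epsilon_{j}^{2s}\to 0$, together with $W\geq 0$ vanishing exactly at $\pm 1$, forces $u_{\ast}(\cdot,0)\in\{\pm 1\}$ almost everywhere, so that $E_{\ast}:=\{u_{\ast}(\cdot,0)=1\}$ is well defined; a weighted trace inequality bounding the $H^{s}$-seminorm of the jump function by the Dirichlet energy of its extension yields $P_{2s}(E_{\ast}, B_{R})<\infty$.

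The technical heart is the simultaneous strong $H^{1}$ convergence and the vanishing $\epsilon_{j}^{-2s}W(u_{\epsilon_{j}})\to 0$ in $L^{1}_{loc}$. I would first invoke the monotonicity formula of Lemma~\ref{lem Allen-Cahn monotonicity formula} to obtain a uniform density bound $\mathbf{\Theta}^{\epsilon_{j}}_{U_{\epsilon_{j}}}(r,x_{0})\leq C(n,s,\Lambda)$ and to prevent energy concentration on lower-dimensional sets. Testing \eqref{Extension equation} against $U_{\epsilon_{j}}-U_{\ast}$ and using the weak convergence of Step~1 gives
\[\int_{\mathcal{B}^{+}_{R}(\mathbf{0})}z^{a}|\nabla U_{\epsilon_{j}}|^{2}\,d\mathbf{x}-\int_{\mathcal{B}^{+}_{R}(\mathbf{0})}z^{a}|\nabla U_{\ast}|^{2}\,d\mathbf{x}+\frac{1}{\epsilon_{j}^{2s}}\int_{B_{R}(0)}W'(u_{\epsilon_{j}})(u_{\epsilon_{j}}-u_{\ast})\,dx\longrightarrow 0.\]
Conversely, a fractional Modica-type inequality combined with lower semicontinuity of both the Dirichlet energy and the $2s$-perimeter shows that the total energy $\mathbf{E}_{\epsilon_{j}}(U_{\epsilon_{j}},\mathcal{B}^{+}_{R})$ cannot drop below $\mathbf{E}(U_{\ast},\mathcal{B}^{+}_{R})+c_{0}P_{2s}(E_{\ast},B_{R})$ in the limit, where $c_{0}$ is the one-dimensional transition cost. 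Matching the two bounds forces both strong convergence of $\nabla U_{\epsilon_{j}}$ in $L^{2}(z^{a}d\mathbf{x})$ and the vanishing of the rescaled potential, proving (ii) and (iv). The pointwise convergence (iii) on compact subsets of $B_{R}(0)\setminus\partial E_{\ast}$ then follows from the interior Hölder regularity for \eqref{Extension equation} and a standard clean-up argument: once $|u_{\ast}|=1$ on a compact set $K$, the uniform modulus of continuity of $u_{\epsilon_{j}}$ rules out any oscillation of size comparable to the distance from $\pm 1$ for large $j$.

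Finally, stationarity follows by writing the inner-variation identity that the weak formulation of \eqref{Extension equation} implies for $U_{\epsilon_{j}}$, namely
\[\int_{\mathcal{B}^{+}_{R}(\mathbf{0})}z^{a}\Big(|\nabla U_{\epsilon_{j}}|^{2}\,\mathrm{div}\,\mathbf{X}-2\nabla U_{\epsilon_{j}}\cdot D\mathbf{X}\nabla U_{\epsilon_{j}}\Big)d\mathbf{x}+\frac{1}{\epsilon_{j}^{2s}}\int_{B_{R}(0)}W(u_{\epsilon_{j}})\,\mathrm{div}\,X\,dx=0\]
for every admissible $\mathbf{X}$; the bulk term passes to the corresponding expression for $U_{\ast}$ by Step~2, and the potential term vanishes by (iv), which is precisely \eqref{eqn2}. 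The main obstacle is the strong convergence step: a priori the defect measure $(|\nabla U_{\epsilon_{j}}|^{2}-|\nabla U_{\ast}|^{2})z^{a}d\mathbf{x}$ and the potential measure $\epsilon_{j}^{-2s}W(u_{\epsilon_{j}})\,dx$ could each carry nontrivial singular parts supported on $\partial E_{\ast}$; closing the gap between the upper bound from the energy identity and the lower bound from the fractional Modica-Mortola inequality is what forces them to vanish jointly, and this is where the monotonicity formula plays an indispensable role.
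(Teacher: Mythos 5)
The paper does not actually prove this theorem: Section~2 opens by saying the results there ``can also be found in'' \cite{Millot-Sire-Wang2019}, and this compactness statement is cited as a black box from that reference (see \cite[Theorem 6.7]{Millot-Sire-Wang2019} and the surrounding material). So your write-up is an independent proof attempt, and I will judge it on its own merits.

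Your Step 1 (weak compactness, trace identification, finiteness of $P_{2s}(E_*,B_R)$) and your Step 4 (passing to the limit in the inner-variation identity once (ii) and (iv) are known) are sound and are essentially what one would do. The genuine gap is in the heart of the argument, and in fact you correctly flag it as ``the main obstacle'': the lower bound you propose is wrong in the regime $s\in(0,1/2)$ that this paper works in. You assert a fractional Modica--Mortola bound of the form $\liminf_j\mathbf{E}_{\epsilon_j}(U_{\epsilon_j},\mathcal{B}^+_R)\geq\mathbf{E}(U_*,\mathcal{B}^+_R)+c_0P_{2s}(E_*,B_R)$ with a positive ``one-dimensional transition cost'' $c_0$. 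That structure is correct for the classical Allen--Cahn functional and for $s>1/2$, where the $\Gamma$-limit is a surface tension times the BV perimeter and the potential carries a nontrivial share of the limiting energy; but for $s<1/2$ the $\Gamma$-limit of $\mathcal{E}_\epsilon$ is the $2s$-perimeter itself, the potential energy contributes nothing in the limit (that is exactly item~(iv)), and the Dirichlet energy of $U_*$ already \emph{is} the extension-version of $P_{2s}(E_*,\cdot)$. With $c_0>0$ your lower bound would strictly exceed the upper bound you extract from testing (which shows the energy limit is at most $\mathbf{E}(U_*)$), so the squeeze cannot close; and with $c_0=0$ the ``Modica'' step carries no information beyond weak lower semicontinuity and does not yield strong $H^1$ convergence or (iv).

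There is a second, more local problem in the same step. When you test the weak form of \eqref{Extension equation} against (a cutoff times) $U_{\epsilon_j}-U_*$, the boundary term is $\epsilon_j^{-2s}\int W'(u_{\epsilon_j})(u_{\epsilon_j}-u_*)$, and the integrand $W'(t)(t-u_*)$ is \emph{not} pointwise nonnegative (for the quartic well with $u_*=1$ one has $W'(t)(t-1)=t(t-1)^2(t+1)<0$ for $t\in(-1,0)$; for general $W$ satisfying (H1)--(H3) there is no sign). So even granting that the measure of the transition region shrinks, the competition between its size and the factor $\epsilon_j^{-2s}$ is exactly the delicate point, and the argument as written does not control it. This is where Millot--Sire--Wang deploy the stationarity-based monotonicity formula (Lemma~\ref{lem Allen-Cahn monotonicity formula}), Marstrand-type density arguments and a blow-up analysis to rule out energy concentration on $\partial E_*$ and simultaneously kill the rescaled potential --- machinery that is genuinely different from the Modica--Mortola comparison you invoke. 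Replacing the incorrect lower bound with that analysis would be needed to make the proof go through.
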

The compactness for sequences of weak solutions of \eqref{extension equation} implies the continuity of the quantity defined in \eqref{Monotone quantity}. In particular, if  $x_{j}\to x$ and $r_{j}\to r>0$ as $j\to\infty$, then (up to a subsequence)
\[\mathbf{\Theta}_{U_{\epsilon_{j}}}^{\epsilon_{j}}(r_{j}, x_{j})\to\mathbf{\Theta}_{U_{\ast}}(r, x), \quad\text{as}~j\to\infty.\]

We also need the following monotonicity formula which holds for solutions of \eqref{eqn1} satisfying \eqref{eqn2}.
\begin{lem}[Monotonicity formula]\label{lem Monotonicityformula} Let $U$ be a solution of \eqref{eqn1} satisfying \eqref{eqn2}. Then for every $\rho<r$,
\begin{equation}\label{Monotonicityformula}
\mathbf{\Theta}_{U}(r, x_{0})-\mathbf{\Theta}_{U}(\rho, x_{0})=d_{s}\int_{\mathcal{B}_{r}^{+}(\mathbf{x}_{0})\backslash \mathcal{B}_{\rho}^{+}(\mathbf{x}_{0})}z^{a}\frac{|(\mathbf{x}-\mathbf{x}_{0})\cdot\nabla U|^{2}}{|\mathbf{x}-\mathbf{x}_{0}|^{n+2-2s}}d\mathbf{x}.
\end{equation}
\end{lem}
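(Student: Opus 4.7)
The plan is to derive the monotonicity identity by testing the stationarity condition \eqref{eqn2} with a radially symmetric vector field centered at $\mathbf{x}_{0}$, and then to manipulate the resulting identity into a differential inequality for $\mathbf{\Theta}_{U}(r,x_{0})$. Since $\mathbf{x}_{0}=(x_{0},0)$ lies on the boundary $\partial\mathbb{R}^{n+1}_{+}$, the natural choice is
\[
\mathbf{X}(\mathbf{x})=\eta(|\mathbf{x}-\mathbf{x}_{0}|)(\mathbf{x}-\mathbf{x}_{0}),
\]
where $\eta\in C^{\infty}_{c}([0,\infty))$ is a smooth cutoff approximating $\chi_{[0,r]}$. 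The $(n+1)$-th component of $\mathbf{X}$ is $X_{n+1}(\mathbf{x})=\eta(|\mathbf{x}-\mathbf{x}_{0}|)\,z$, which vanishes on $\{z=0\}$ by the extra factor of $z$. Thus $\mathbf{X}$ is admissible in \eqref{eqn2}.

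The key computation is the first variation formula for the degenerate-weighted Dirichlet energy. Writing $\mathbf{y}=\mathbf{x}-\mathbf{x}_{0}$ and $\rho=|\mathbf{y}|$, a direct computation gives
\[
\operatorname{div}\mathbf{X}=\eta'(\rho)\rho+(n+1)\eta(\rho),\qquad \langle\nabla U,D\mathbf{X}\,\nabla U\rangle=\eta'(\rho)\frac{|\mathbf{y}\cdot\nabla U|^{2}}{\rho}+\eta(\rho)|\nabla U|^{2},
\]
and the Lie-derivative of the weight $z^{a}$ contributes the boundary-type term $a\,z^{a-1}X_{n+1}=a\,z^{a}\,\eta(\rho)$, which merges into the bulk and reduces the constant $n+1$ to $n-1+a=n-2s$. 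After setting $\delta\mathbf{E}[\mathbf{X}]=0$ and dividing by $d_{s}/2$, one is left with
\[
\int z^{a}\bigl[\eta'(\rho)\rho+(n-2s)\eta(\rho)\bigr]|\nabla U|^{2}d\mathbf{x}\;-\;2\int z^{a}\eta'(\rho)\frac{|\mathbf{y}\cdot\nabla U|^{2}}{\rho}d\mathbf{x}=0.
\]

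I then let $\eta\to\chi_{[0,r]}$, so that $\eta'\to-\delta_{r}$, and use the co-area formula $\int f\,d\mathbf{x}=\int_{0}^{\infty}\int_{\partial'\mathcal{B}_{t}^{+}(\mathbf{x}_{0})}f\,d\sigma\,dt$ to rewrite the limit as
\[
r\int_{\partial'\mathcal{B}_{r}^{+}(\mathbf{x}_{0})}z^{a}|\nabla U|^{2}d\sigma=(n-2s)\int_{\mathcal{B}_{r}^{+}(\mathbf{x}_{0})}z^{a}|\nabla U|^{2}d\mathbf{x}+\frac{2}{r}\int_{\partial'\mathcal{B}_{r}^{+}(\mathbf{x}_{0})}z^{a}|\mathbf{y}\cdot\nabla U|^{2}d\sigma.
\]
Combining this with $\frac{d}{dr}\mathbf{E}(U,\mathcal{B}_{r}^{+}(\mathbf{x}_{0}))=\frac{d_{s}}{2}\int_{\partial'\mathcal{B}_{r}^{+}(\mathbf{x}_{0})}z^{a}|\nabla U|^{2}d\sigma$ and differentiating $\mathbf{\Theta}_{U}(r,x_{0})=r^{-(n-2s)}\mathbf{E}(U,\mathcal{B}_{r}^{+}(\mathbf{x}_{0}))$ yields
\[
\frac{d}{dr}\mathbf{\Theta}_{U}(r,x_{0})=d_{s}\int_{\partial'\mathcal{B}_{r}^{+}(\mathbf{x}_{0})}z^{a}\frac{|\mathbf{y}\cdot\nabla U|^{2}}{|\mathbf{y}|^{n+2-2s}}d\sigma,
\]
and integrating from $\rho$ to $r$ and re-applying the co-area formula gives the desired identity \eqref{Monotonicityformula}.

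The only delicate point is the rigorous justification of the first-variation computation for the degenerate weight $z^{a}$ with $a\in(-1,0)$: one has to check that $a\,z^{a-1}X_{n+1}=a\,z^{a}\eta$ so that no true boundary integral survives (the factor of $z$ in $X_{n+1}$ is exactly what makes the stationarity identity integrable near $\{z=0\}$ and allows us to absorb the weight derivative into the bulk). Once that is handled, the argument is a standard domain-variation computation, and the passage from the smooth cutoff $\eta$ to the sharp cutoff $\chi_{[0,r]}$ follows from the absolute continuity of $r\mapsto\mathbf{E}(U,\mathcal{B}_{r}^{+}(\mathbf{x}_{0}))$.
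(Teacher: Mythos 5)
Your argument is correct and is precisely the standard domain-variation (Pohozaev/Rellich-type) proof of this monotonicity identity; the paper itself does not give an independent proof but cites \cite[Lemma 4.2 and surrounding]{Millot-Sire-Wang2019}, where the same computation appears. The key points — choosing $\mathbf{X}(\mathbf{x})=\eta(|\mathbf{x}-\mathbf{x}_0|)(\mathbf{x}-\mathbf{x}_0)$, noting that the extra factor of $z$ in $X_{n+1}$ makes $\mathbf{X}$ admissible and converts the weight-derivative term $a z^{a-1}X_{n+1}$ into $a z^a \eta(\rho)$ so that the constant becomes $n-1+a=n-2s$, then letting $\eta\to\chi_{[0,r]}$ and differentiating $\mathbf{\Theta}_U(r,x_0)=r^{-(n-2s)}\mathbf{E}(U,\mathcal{B}_r^+(\mathbf{x}_0))$ — are all carried out correctly, and integrating the resulting sphere identity in $r$ and undoing the coarea formula recovers \eqref{Monotonicityformula}. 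One small slip in your closing aside: for $s\in(0,1/2)$ one has $a=1-2s\in(0,1)$, not $(-1,0)$; the weight $z^a$ is degenerate (vanishing) rather than singular at $z=0$, though its gradient $a z^{a-1}$ is still singular, which is exactly what the factor of $z$ in $X_{n+1}$ compensates for, as you correctly observe.
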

Lemma \ref{lem Monotonicityformula} implies the density function
\begin{equation}\label{densityfunction}
\mathbf{\Theta}_{U}(x_{0})=\lim_{r\downarrow0}\mathbf{\Theta}_{U}(r, x_{0})=\lim_{r\downarrow0}\frac{1}{r^{n-2s}}\textbf{E}(U,\mathcal{B}_{r}^{+}(\mathbf{x}_{0}))
\end{equation}
is well defined. The function $\mathbf{\Theta}_{U}$  is upper semi-continuous in the following sense.
\begin{lem}[Upper semi-continuity]\label{Uppersemicontinuity}
Let $\{U_{j}\}$ be a sequence of  solutions of \eqref{eqn1} satisfying \eqref{eqn2}. Assume  $U_{j}\to U_{\infty}$ weakly in $H^{1}(\mathcal{B}^{+}_{R}(\mathbf{0}), z^{a}d\mathbf{x})$ and $U_{j}\to U_{\infty}$ strongly in $H^{1}_{loc}(\mathcal{B}^{+}_{R}(\mathbf{0})\cup\partial^{0}\mathcal{B}^{+}_{R}(\mathbf{0}), z^{a}d\mathbf{x})$. If $\{x_{j}\}\subset B_{R}(0)$ is a sequence converging to $x\in B_{R}(0)$, then
\[\lim_{j\to\infty}\mathbf{\Theta}_{U_{j}}(x_{j})\leq \mathbf{\Theta}_{U_{\infty}}(x).\]
\end{lem}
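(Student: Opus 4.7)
\smallskip

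\noindent\textbf{Proof plan for Lemma \ref{Uppersemicontinuity}.} The plan is to combine the monotonicity formula of Lemma \ref{lem Monotonicityformula} with the strong convergence hypothesis to reduce the pointwise statement at the $0$-scale to a statement at a fixed positive scale $r$, and then to let $r\downarrow 0$.

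First I would fix an arbitrary $r>0$ small enough that $\overline{\mathcal{B}^{+}_{r}(\mathbf{x})}\subset\mathcal{B}^{+}_{R}(\mathbf{0})$. Since $x_{j}\to x$, for all sufficiently large $j$ we also have $\overline{\mathcal{B}^{+}_{r}(\mathbf{x}_{j})}\subset\mathcal{B}^{+}_{R}(\mathbf{0})$, so each $U_{j}$ satisfies the monotonicity formula \eqref{Monotonicityformula} at $x_{j}$ on $(0,r]$. In particular, by the monotonicity of $\rho\mapsto \mathbf{\Theta}_{U_{j}}(\rho,x_{j})$ and the definition \eqref{densityfunction},
\begin{equation*}
\mathbf{\Theta}_{U_{j}}(x_{j})\;\leq\;\mathbf{\Theta}_{U_{j}}(r,x_{j})\;=\;\frac{1}{r^{n-2s}}\,\mathbf{E}\bigl(U_{j},\mathcal{B}^{+}_{r}(\mathbf{x}_{j})\bigr).
\end{equation*}
Taking the $\limsup$ in $j$ on both sides reduces matters to showing
\begin{equation*}
\limsup_{j\to\infty}\mathbf{E}\bigl(U_{j},\mathcal{B}^{+}_{r}(\mathbf{x}_{j})\bigr)\;\leq\;\mathbf{E}\bigl(U_{\infty},\mathcal{B}^{+}_{r}(\mathbf{x})\bigr)
\end{equation*}
for a suitable cofinal set of radii $r$.

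For the energy passage, I would write $\mathbf{E}(U_{j},\mathcal{B}^{+}_{r}(\mathbf{x}_{j}))=\frac{d_{s}}{2}\int z^{a}|\nabla U_{j}|^{2}\chi_{\mathcal{B}^{+}_{r}(\mathbf{x}_{j})}\,d\mathbf{x}$. The strong convergence of $U_{j}$ to $U_{\infty}$ in $H^{1}_{loc}(\mathcal{B}^{+}_{R}(\mathbf{0})\cup\partial^{0}\mathcal{B}^{+}_{R}(\mathbf{0}),z^{a}d\mathbf{x})$ gives $z^{a}|\nabla U_{j}|^{2}\to z^{a}|\nabla U_{\infty}|^{2}$ in $L^{1}_{loc}$, while $\chi_{\mathcal{B}^{+}_{r}(\mathbf{x}_{j})}\to\chi_{\mathcal{B}^{+}_{r}(\mathbf{x})}$ a.e. (on the interior). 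For all but countably many $r>0$ the sphere $\partial\mathcal{B}^{+}_{r}(\mathbf{x})$ carries no $z^{a}|\nabla U_{\infty}|^{2}$-mass, so a standard dominated-convergence argument (using the $L^{1}$-convergence of $z^{a}|\nabla U_{j}|^{2}$ on a common compact neighborhood) yields
\begin{equation*}
\lim_{j\to\infty}\mathbf{E}\bigl(U_{j},\mathcal{B}^{+}_{r}(\mathbf{x}_{j})\bigr)=\mathbf{E}\bigl(U_{\infty},\mathcal{B}^{+}_{r}(\mathbf{x})\bigr).
\end{equation*}
Combining with the previous inequality gives $\limsup_{j}\mathbf{\Theta}_{U_{j}}(x_{j})\leq \mathbf{\Theta}_{U_{\infty}}(r,x)$ for such $r$.

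Finally, since the left-hand side does not depend on $r$, I would let $r\downarrow 0$ along such admissible radii and apply \eqref{densityfunction} for $U_{\infty}$ (which is itself a solution of \eqref{eqn1}--\eqref{eqn2}, so Lemma \ref{lem Monotonicityformula} applies and the limit exists). This produces
\begin{equation*}
\limsup_{j\to\infty}\mathbf{\Theta}_{U_{j}}(x_{j})\;\leq\;\mathbf{\Theta}_{U_{\infty}}(x),
\end{equation*}
which is the desired upper semi-continuity. The only delicate point is the joint limit over $j\to\infty$ and shifting centers $x_{j}\to x$; this is handled uniformly by the strong $H^{1}$-convergence up to the boundary $\partial^{0}\mathcal{B}^{+}_{R}(\mathbf{0})$, which is exactly why the hypothesis includes that piece of the boundary and not just interior regularity.
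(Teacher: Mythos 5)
Your argument is correct and is the standard one: fix a small radius $r$, use the monotonicity of $\rho\mapsto\mathbf{\Theta}_{U_j}(\rho,x_j)$ from Lemma \ref{lem Monotonicityformula} to bound the pointwise density at $x_j$ by the scale-$r$ density, pass to the limit in the scale-$r$ energies using the strong $H^1_{loc}$ convergence up to $\partial^0\mathcal{B}^+_R(\mathbf{0})$, and then send $r\downarrow 0$ using \eqref{densityfunction} for $U_\infty$. (The paper itself does not reproduce a proof of Lemma \ref{Uppersemicontinuity} but cites \cite{Millot-Sire-Wang2019}, where this is exactly the argument.) One small remark: the caveat ``for all but countably many $r$'' is superfluous here, since the reference measure $z^a|\nabla U_\infty|^2\,d\mathbf{x}$ is absolutely continuous with respect to Lebesgue measure, so \emph{every} sphere $\partial\mathcal{B}_r(\mathbf{x})$ is a null set; the splitting $\int f_j\chi_j-\int f\chi=\int(f_j-f)\chi_j+\int f(\chi_j-\chi)$ together with the $L^1$-convergence of $f_j=z^a|\nabla U_j|^2$ and the dominated a.e.\ convergence of the indicators then gives the energy limit for every admissible $r$.
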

In the end of this section, we state the following clearing-out property  which can be seen as a small-energy regularity result.
\begin{lem}[Clear out]\label{Clear out}
There exist positive constants $\eta_{b}, R_{b}$ such that the following holds.

Given $\epsilon_{j}\downarrow 0$. For each $j$, let $U_{\epsilon_{j}}\in H^{1}(\mathcal{B}^{+}_{R}(\mathbf{0}), z^{a}d\mathbf{x})\cap L^{\infty}(\mathcal{B}^{+}_{R}(\mathbf{0}))$ be a weak solution of \eqref{Extension equation} such that $\|U_{\epsilon_{j}}\|_{L^{\infty}(\mathcal{B}^{+}_{R}(\mathbf{0}))}\leq b$. If $\rho\leq R_{b}$ and
\[\liminf_{j\to\infty}\mathbf{\Theta}^{\epsilon_{j}}_{U_{\epsilon_{j}}}(\rho, \mathbf{0})\leq \eta_{b}.\]
Then (up to a subsequence)  either $U_{\epsilon_{j}}\to 1$ or $U_{\epsilon_{j}}\to-1$ as $j\to\infty$ uniformly on $B_{\rho/4}(\mathbf{0})$ .
\end{lem}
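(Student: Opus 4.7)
The plan is a standard blow-up-compactness argument that reduces the lemma to a density lower bound at boundary points of $\partial E_\ast$ for limiting stationary nonlocal minimal surfaces. From the liminf hypothesis I would extract a subsequence $\{\epsilon_{j_m}\}$ with $\mathbf{\Theta}^{\epsilon_{j_m}}_{U_{\epsilon_{j_m}}}(\rho, \mathbf{0}) \leq 2\eta_b$; since $\rho$ is fixed, the ratio $\tilde \epsilon_m := \epsilon_{j_m}/\rho$ also tends to $0$. I would then rescale to unit scale via $\tilde U_m(\mathbf{x}) := U_{\epsilon_{j_m}}(\rho \mathbf{x})$ on $\mathcal{B}_1^+(\mathbf{0})$. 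A direct computation using $a = 1 - 2s$ verifies that $\tilde U_m$ is a weak solution of \eqref{Extension equation} with parameter $\tilde \epsilon_m$, and that $\mathbf{\Theta}^\epsilon_\cdot(r, \mathbf{0})$ is scale-invariant, so that $\mathbf{\Theta}^{\tilde \epsilon_m}_{\tilde U_m}(1, \mathbf{0}) = \mathbf{\Theta}^{\epsilon_{j_m}}_{U_{\epsilon_{j_m}}}(\rho, \mathbf{0}) \leq 2\eta_b$ with $\|\tilde U_m\|_{L^\infty} \leq b$.

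Applying Theorem \ref{convergence} to $\{\tilde U_m\}$ on $\mathcal{B}_1^+(\mathbf{0})$, a further subsequence converges strongly in $H^1(\mathcal{B}_1^+(\mathbf{0}) \cup B_1(0), z^a d\mathbf{x})$ to some $U_\ast$ solving \eqref{eqn1}--\eqref{eqn2}, with $U_\ast(x, 0) = \chi_{E_\ast} - \chi_{\mathbb{R}^n \setminus E_\ast}$ and with the $C^0_{\textrm{loc}}(B_1(0) \setminus \partial E_\ast)$ convergence of item (iii). Strong $H^1$ convergence together with item (iv) of Theorem \ref{convergence} gives $\mathbf{\Theta}_{U_\ast}(1, \mathbf{0}) \leq 2\eta_b$. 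The decisive ingredient now is a uniform density lower bound for stationary nonlocal minimal surfaces: there exists $c_0 = c_0(n, s) > 0$ such that $\mathbf{\Theta}_{U_\ast}(x) \geq c_0$ at every $x \in \partial E_\ast$. By monotonicity (Lemma \ref{lem Monotonicityformula}), for any $x \in \partial E_\ast \cap B_{1/2}(0)$ one has $\mathcal{B}_{1/2}^+(x) \subset \mathcal{B}_1^+(\mathbf{0})$ and so
\[
c_0 \leq \mathbf{\Theta}_{U_\ast}(x) \leq \mathbf{\Theta}_{U_\ast}(1/2, x) \leq 2^{n - 2s}\, \mathbf{\Theta}_{U_\ast}(1, \mathbf{0}) \leq 2^{n - 2s + 1}\, \eta_b.
\]

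Choosing $R_b := R/2$ (so that $\mathcal{B}_{R_b}^+(\mathbf{0}) \subset \mathcal{B}_R^+(\mathbf{0})$) and $\eta_b := c_0 \cdot 2^{-(n - 2s + 2)}$, the chain above is contradictory, forcing $\partial E_\ast \cap B_{1/2}(0) = \emptyset$. Hence $E_\ast \cap B_{1/2}(0)$ is either empty or equal to $B_{1/2}(0)$, and item (iii) of Theorem \ref{convergence} upgrades to uniform convergence $\tilde U_m(\cdot, 0) \to \pm 1$ on $\overline{B_{1/4}(0)}$. Unscaling yields $U_{\epsilon_{j_m}}(\cdot, 0) \to \pm 1$ uniformly on $B_{\rho/4}(0)$, which is the conclusion. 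The main substantive obstacle is the density lower bound $c_0$ for $\partial E_\ast$, which relies on the monotonicity formula together with a blow-up/tangent argument ruling out a constant tangent at a point of $\partial E_\ast$; this is the analogue of the classical density lower bound for minimal surfaces and is available in the Caffarelli--Silvestre extension framework developed by Millot--Sire--Wang. The rest of the argument is standard: scale invariance of $\mathbf{\Theta}^\epsilon$, compactness for the $\epsilon$-problem with small energy, and the clearing-out dichotomy for limit configurations with vanishing interface.
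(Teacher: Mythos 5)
Your proof is correct and follows the standard small-energy-regularity argument that the paper implicitly relies on (the lemma is stated without proof, with a pointer to Millot--Sire--Wang \cite{Millot-Sire-Wang2019}, and this is precisely the compactness-plus-density-lower-bound reduction used there). The chain of ideas --- extract a subsequence with small density, rescale to unit scale using the scale invariance of $\mathbf{\Theta}^{\epsilon}_U$, invoke Theorem \ref{convergence} to pass to a limiting stationary nonlocal minimal surface $U_\ast$ with $\mathbf{\Theta}_{U_\ast}(1,\mathbf{0})\leq 2\eta_b$, use the universal density lower bound $\mathbf{\Theta}_{U_\ast}(x)\geq c_0$ at points of $\partial E_\ast$ together with the monotonicity formula to force $\partial E_\ast\cap B_{1/2}(0)=\emptyset$ once $\eta_b$ is small, and then use item (iii) of Theorem \ref{convergence} to upgrade to uniform convergence of the trace on $\overline{B_{1/4}(0)}$ and unscale --- is sound. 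You correctly identify the one substantive input, the universal density lower bound for $\partial E_\ast$, which is indeed available in the Millot--Sire--Wang framework.

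Two minor remarks, neither of which affects correctness of the argument: the choice $R_b:=R/2$ makes the constant depend on the ambient radius $R$, whereas the notation $\eta_b, R_b$ in the lemma suggests dependence only on $b$ (and $n,s,W$); this is cosmetic since $R$ is fixed in the setting and the only role of $R_b$ is to guarantee room to rescale and apply compactness. Also, if the strong $H^1$ convergence from Theorem \ref{convergence} is interpreted as interior, the passage $\mathbf{\Theta}_{U_\ast}(1,\mathbf{0})\leq 2\eta_b$ should be replaced by $\mathbf{\Theta}_{U_\ast}(1-\delta,\mathbf{0})\leq(1-\delta)^{2s-n}\cdot 2\eta_b$ for small $\delta>0$, and $B_{1/2}$ by a slightly smaller ball; the conclusion on $B_{\rho/4}(0)$ is unaffected.
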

\section{Symmetry of functions}
\setcounter{equation}{0}
\begin{defi}
For any $x_{0}\in\mathbb{R}^{n}$, a function $U$ defined on $\mathbb{R}_{+}^{n+1}$  is homogeneous about the point $\mathbf{x}_{0}=(x_{0}, 0)$ if for any $\lambda>0$,
\[U(\mathbf{x}_0+\lambda\mathbf{x})=U(\mathbf{x}_0+\mathbf{x}).\]
For each $k\in\{0, 1, \cdots n\}$, we say a  function $U$ is $k$-symmetric if $U$ is homogeneous about $\mathbf{x}_{0}$ and there is a $k$-dimensional plane $L^{k}\subset\mathbb{R}^{n}$ such that
\[U(\mathbf{x}+(v, 0))=U(\mathbf{x}),\quad\forall v\in L^{k}.\]
In particular, if $U$ is $0$-symmetric,  then $U$ is homogeneous about $\mathbf{x}_{0}$.
\end{defi}
\begin{defi}
Given $r>0, x_{0}\in\mathbb{R}^{n}$ and $k\in\{0, 1, \cdots n\}$, we say a function $U\in L^{2}(\mathcal{B}_{r}^{+}(\mathbf{x}_{0}), z^{a}d\mathbf{x})$ is $(k, \epsilon)-$symmetric about  $\mathbf{x}_{0}=(x_{0}, 0)$  in $\mathcal{B}_{r}^{+}(\mathbf{x}_{0})$ if
\[r^{2s-2-n}\int_{\mathcal{B}_{r}^{+}(\mathbf{x}_{0})}z^{a}|U-\bar{U}|^{2}d\mathbf{x}<\epsilon\]
for some $k-$symmetric $\bar{U}\in L^{2}(\mathcal{B}_{r}^{+}(\mathbf{x}_{0}), z^{a}d\mathbf{x})$.
\end{defi}
\begin{defi}[Nonlocal stationary cone]
A function $\phi$ is a nonlocal stationary cone if $\phi$ is homogeneous with respect to $\mathbf{0}$
and $\phi\in H^{1}(\mathcal{B}_{1}^{+}(\mathbf{0}), z^{a}d\mathbf{x})\cap L^{\infty}(\mathcal{B}_{1}^{+}(\mathbf{0}))$ is a solution of \eqref{eqn1} satisfying \eqref{eqn2}.
\end{defi}
\begin{rmk}\label{n-symmetric nonlocal minimal cone}
If $\phi$ is a $n-$symmetric nonlocal stationary cone, then either $\phi=1$ or $\phi=-1$.
\end{rmk}
The next lemma is the key step to apply Federer’s dimension reduction principle. The proof can be found in \cite[Lemma 6.18]{Millot-Sire-Wang2019}.
\begin{lem}\label{Dimension reduction principle}
Let $\phi$ be a nonlocal stationary cone. Then
\[\mathbf{\Theta}_{\phi}(y)\leq \mathbf{\Theta}_{\phi}(0),\quad\text{for any}~y\in\mathbb{R}^{n}.\]
In addition, the set
\[S(\phi):=\{y\in\mathbb{R}^{n}:\mathbf{\Theta}_{\phi}(y)=\mathbf{\Theta}_{\phi}(0)\}\]
is a linear subspace of  $\mathbb{R}^{n}$, and $\phi(\mathbf{x}+(y, 0))=\phi(\mathbf{x})$ for every $y\in S(\phi)$ and $\mathbf{x}\in\mathbb{R}_{+}^{n+1}$.
\end{lem}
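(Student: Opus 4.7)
\medskip
\noindent\textbf{Proof plan.}

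\emph{The density inequality.} The plan is to combine the scale invariance coming from the homogeneity of $\phi$ with the monotonicity formula of Lemma~\ref{lem Monotonicityformula}. A direct change of variables $\mathbf{x}\mapsto\lambda\mathbf{x}$ in the Dirichlet energy, using $\nabla\phi(\lambda\mathbf{x})=\lambda^{-1}\nabla\phi(\mathbf{x})$ from the cone property, together with $a=1-2s$, gives the identity
\begin{equation*}
\mathbf{\Theta}_{\phi}(\lambda r,\lambda y)=\mathbf{\Theta}_{\phi}(r,y)\qquad\text{for all }\lambda>0.
\end{equation*}
In particular, $\mathbf{\Theta}_{\phi}(\rho,0)=\mathbf{\Theta}_{\phi}(0)$ for every $\rho>0$. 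Now I use the crude inclusion $\mathcal{B}_{r}^{+}(\mathbf{y})\subset\mathcal{B}_{r+|y|}^{+}(\mathbf{0})$ to write
\begin{equation*}
\mathbf{\Theta}_{\phi}(r,y)\le \Bigl(1+\tfrac{|y|}{r}\Bigr)^{n-2s}\mathbf{\Theta}_{\phi}(r+|y|,0)=\Bigl(1+\tfrac{|y|}{r}\Bigr)^{n-2s}\mathbf{\Theta}_{\phi}(0),
\end{equation*}
and let $r\to\infty$; by monotonicity the lower bound $\mathbf{\Theta}_{\phi}(y)\le\mathbf{\Theta}_{\phi}(r,y)$ then yields $\mathbf{\Theta}_{\phi}(y)\le\mathbf{\Theta}_{\phi}(0)$.

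\emph{Equality case forces homogeneity about $\mathbf{y}$.} The same chain of inequalities, used in the reverse inclusion $\mathcal{B}_{r}^{+}(\mathbf{0})\subset\mathcal{B}_{r+|y|}^{+}(\mathbf{y})$, gives $\mathbf{\Theta}_{\phi}(r,y)\to\mathbf{\Theta}_{\phi}(0)$ as $r\to\infty$. Since $r\mapsto\mathbf{\Theta}_{\phi}(r,y)$ is nondecreasing with limit $\mathbf{\Theta}_{\phi}(y)$ at $r\to 0^{+}$ and limit $\mathbf{\Theta}_{\phi}(0)$ at $r\to\infty$, the assumption $y\in S(\phi)$ forces $\mathbf{\Theta}_{\phi}(r,y)$ to be constant in $r$. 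Plugging this into the monotonicity formula \eqref{Monotonicityformula} makes its right-hand side vanish, so $(\mathbf{x}-\mathbf{y})\cdot\nabla\phi=0$ a.e.\ in $\mathbb{R}^{n+1}_{+}$, i.e., $\phi$ is homogeneous about $\mathbf{y}$ as well.

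\emph{Two-center homogeneity yields translation invariance.} Now $\phi$ is homogeneous about both $\mathbf{0}$ and $\mathbf{y}$. The algebraic computation
\begin{equation*}
\phi(\mathbf{x}+\mathbf{y})=\phi\bigl(\lambda(\mathbf{x}+\mathbf{y})\bigr)=\phi\bigl(\mathbf{y}+\mu(\lambda(\mathbf{x}+\mathbf{y})-\mathbf{y})\bigr)
\end{equation*}
with $\lambda=1+\mu^{-1}$ collapses to $\phi(\mathbf{x}+\mathbf{y})=\phi\bigl(\mathbf{x}+\tfrac{2\mathbf{y}}{\mu+1}\bigr)$ for every $\mu>0$. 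Because $\phi\in L^{1}_{loc}$, I can invoke continuity of translation in $L^{1}_{loc}$ and send $\mu\to\infty$ to get $\phi(\mathbf{x}+\mathbf{y})=\phi(\mathbf{x})$ a.e., which is the desired translation invariance along $(y,0)$.

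\emph{Linearity of $S(\phi)$.} Closure under positive dilations is immediate from $\mathbf{\Theta}_{\phi}(\lambda y)=\mathbf{\Theta}_{\phi}(y)$, a consequence of the scaling identity above taken in the limit $r\to 0^{+}$. Closure under addition follows because translation invariance by $\mathbf{y}_{1}$ and by $\mathbf{y}_{2}$ composes to translation invariance by $\mathbf{y}_{1}+\mathbf{y}_{2}$, which in turn gives $\mathbf{\Theta}_{\phi}(y_{1}+y_{2})=\mathbf{\Theta}_{\phi}(0)$ by reading the density off any base point. Finally, translation invariance by $\mathbf{y}$ gives $\mathbf{\Theta}_{\phi}(-y)=\mathbf{\Theta}_{\phi}(0)$, so $-y\in S(\phi)$ and $S(\phi)$ is a linear subspace.

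The only subtle point, and the step I expect to need the most care, is the passage from ``$\phi$ is homogeneous about two points'' to ``$\phi$ is translation invariant in that direction'' in the almost-everywhere sense, since $\phi$ need not be continuous. Fortunately the $L^{1}_{loc}$ translation-continuity argument handles this cleanly.
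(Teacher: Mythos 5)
The paper does not give its own proof of Lemma~\ref{Dimension reduction principle}; it cites \cite[Lemma 6.18]{Millot-Sire-Wang2019}. Your argument is the standard Federer--Almgren dimension-reduction scheme for stationary cones and is correct in all essential respects: the scale-invariance identity $\mathbf{\Theta}_\phi(\lambda r,\lambda y)=\mathbf{\Theta}_\phi(r,y)$ (the exponent count $-(n-2s)+a-2+(n+1)=0$ checks out with $a=1-2s$), the inclusion $\mathcal{B}_r^+(\mathbf{y})\subset\mathcal{B}_{r+|y|}^+(\mathbf{0})$ together with monotonicity yielding the density inequality, the reverse inclusion yielding $\lim_{r\to\infty}\mathbf{\Theta}_\phi(r,y)=\mathbf{\Theta}_\phi(0)$, the forced constancy of $r\mapsto\mathbf{\Theta}_\phi(r,y)$ in the equality case, the vanishing of the right-hand side of \eqref{Monotonicityformula} giving homogeneity about $\mathbf{y}$, and the closing linear-algebra arguments for $S(\phi)$. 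This is almost certainly the same route as the cited reference.

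One small observation on the ``two-center homogeneity yields translation invariance'' step, which you yourself flag as the delicate point. Your choice $\lambda=1+\mu^{-1}$ leaves a residual dilation by $\mu+1$, which you then cancel with a third dilation about $\mathbf{0}$ and remove by sending $\mu\to\infty$, appealing to $L^1_{loc}$-continuity of translation. This is correct but heavier than needed. If you impose $\lambda\mu=1$ instead, the composition of the two dilations is already a pure translation with no residual scaling:
\[
\phi(\mathbf{w})=\phi(\lambda\mathbf{w})=\phi\bigl(\mathbf{y}+\mu(\lambda\mathbf{w}-\mathbf{y})\bigr)=\phi\bigl(\mathbf{w}+(1-\mu)\mathbf{y}\bigr),
\]
and choosing $\mu=2$ gives $\phi(\mathbf{w})=\phi(\mathbf{w}-\mathbf{y})$ outright as an a.e.\ identity, with no limiting argument and no appeal to translation continuity. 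Equivalently, once homogeneity about $\mathbf{y}$ is established you can just subtract the two first-variation identities $\mathbf{x}\cdot\nabla\phi=0$ and $(\mathbf{x}-\mathbf{y})\cdot\nabla\phi=0$ to get $\mathbf{y}\cdot\nabla\phi=0$ directly. Either route avoids the extra machinery entirely.
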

Finally, we provide a useful lemma, which is an easy consequence of the uniqueness of the strong convergence limit.
\begin{lem}\label{lem symmetry of limitation}
Given a sequence $\epsilon_{j}\downarrow 0$, suppose for each $j\in\mathbb{N}$,   $U_{\epsilon_{j}}$ is a weak solution of \eqref{extension equation}.
Assume
\[\sup_{j}\textbf{E}_{\epsilon_{j}}(U_{\epsilon_{j}}, \mathcal{B}_{R}^{+}(\mathbf{0}))<\infty\] and there exists a sequence
$\delta_{j}\downarrow 0$ such that for each $j$,
\[U_{\epsilon_{j}}~\text{is}~(0, \delta_{j})~\text{symmeteric in}~ \mathcal{B}^{+}_{R}(\mathbf{0}).\]
If $U_{\epsilon_{j}}\to U_{\ast}$ strongly in $H^{1}(\mathcal{B}_{R}^{+}(\mathbf{0})\cup B_{R}(0), z^{a}d\mathbf{x})$ as $j\to\infty$, then $U_{\ast}$ is homogeneous in $\mathcal{B}^{+}_{R}(\mathbf{0})$.
\end{lem}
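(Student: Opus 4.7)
My plan is to extract, from the $(0,\delta_j)$-symmetry hypothesis, a sequence of genuinely $0$-symmetric (i.e., homogeneous about $\mathbf{0}$) competitors $\bar{U}_j$ that share a common weighted $L^2$-limit with $U_{\epsilon_j}$, and then pass the homogeneity through that limit. Unpacking the definition of $(0,\delta_j)$-symmetry about $\mathbf{0}$, for each $j$ there is a $0$-symmetric $\bar{U}_j\in L^2(\mathcal{B}_R^+(\mathbf{0}),z^a d\mathbf{x})$ with
\[
\int_{\mathcal{B}_R^+(\mathbf{0})} z^a |U_{\epsilon_j}-\bar{U}_j|^2\,d\mathbf{x} < R^{n+2-2s}\delta_j \longrightarrow 0.
\]
Since strong convergence in $H^1(\mathcal{B}_R^+(\mathbf{0})\cup B_R(0),z^a d\mathbf{x})$ implies weighted $L^2$-convergence $U_{\epsilon_j}\to U_{\ast}$ in $\mathcal{B}_R^+(\mathbf{0})$, the triangle inequality forces $\bar{U}_j\to U_{\ast}$ in the same weighted $L^2$-norm.

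The substance of the argument is then to show that the weighted $L^2$-limit of $0$-symmetric functions is $0$-symmetric. Fix $\lambda\in(0,1]$ and a test function $\psi\in C_c^\infty(\mathcal{B}_R^+(\mathbf{0}))$. Homogeneity of $\bar{U}_j$ gives the trivial identity
\[
\int_{\mathcal{B}_R^+(\mathbf{0})} z^a\,\bar{U}_j(\lambda\mathbf{x})\,\psi(\mathbf{x})\,d\mathbf{x}
= \int_{\mathcal{B}_R^+(\mathbf{0})} z^a\,\bar{U}_j(\mathbf{x})\,\psi(\mathbf{x})\,d\mathbf{x}.
\]
I would then apply the change of variables $\mathbf{y}=\lambda\mathbf{x}$ on the left, using the scaling relation $z^a\,d\mathbf{x} = \lambda^{-(n+2-2s)}\,y_{n+1}^a\,d\mathbf{y}$, to rewrite the left-hand side as $\lambda^{-(n+2-2s)}\int y_{n+1}^a \bar{U}_j(\mathbf{y})\,\psi(\mathbf{y}/\lambda)\,d\mathbf{y}$. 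For $\lambda\in(0,1]$ the rescaled test function $\psi(\cdot/\lambda)$ is still supported in $\mathcal{B}_R^+(\mathbf{0})$ and is bounded, hence both sides are continuous linear functionals of $\bar{U}_j$ in the weighted $L^2$ topology. Letting $j\to\infty$ transfers the identity to $U_{\ast}$; reversing the change of variables and using the arbitrariness of $\psi$ gives $U_{\ast}(\lambda\mathbf{x})=U_{\ast}(\mathbf{x})$ a.e.\ in $\mathcal{B}_R^+(\mathbf{0})$ for every $\lambda\in(0,1]$. The case $\lambda>1$ follows by running the same argument with $\lambda^{-1}$ on $\mathcal{B}_{R/\lambda}^+(\mathbf{0})$, which is enough to yield homogeneity at every pair $(\mathbf{x},\lambda\mathbf{x})$ lying in $\mathcal{B}_R^+(\mathbf{0})$.

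I do not foresee any genuine obstacle: the lemma is essentially the general principle that scale invariance passes to strong $L^2$-limits, applied to the scale-invariant weighted measure $z^a d\mathbf{x}$. The only care required is tracking the factor $\lambda^{-(n+2-2s)}$ produced by the weight under scaling and ensuring all test functions remain compactly supported inside $\mathcal{B}_R^+(\mathbf{0})$ so that weighted $L^2$-convergence may be invoked. In particular, the hypothesis of strong $H^1$-convergence is considerably stronger than needed; mere weighted $L^2$-convergence of $U_{\epsilon_j}$ to $U_{\ast}$ in $\mathcal{B}_R^+(\mathbf{0})$ would already suffice for the conclusion.
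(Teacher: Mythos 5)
Your proof is correct and follows the same structure as the paper's: extract homogeneous competitors $\bar{U}_j$ with $\|U_{\epsilon_j}-\bar{U}_j\|_{L^2(z^a)}\to 0$, use the triangle inequality to get $\bar{U}_j\to U_\ast$ in weighted $L^2$, and conclude that $U_\ast$ inherits homogeneity. The only difference is that you supply an explicit justification (testing against $\psi\in C_c^\infty$ and a change of variables) for the final step, which the paper merely asserts — equivalently, one could observe that the $0$-homogeneous functions form a closed subspace of $L^2(\mathcal{B}_R^+(\mathbf{0}),z^a d\mathbf{x})$ since they are isometrically identified, after a polar-coordinate split, with $L^2$ of the half-sphere.
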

\begin{proof}
For each $j$, let $\bar{U}_{j}$ be a function which is homogeneous with respect to $\mathbf{0}$ such that
\[R^{2s-2-n}\int_{\mathcal{B}_{R}^{+}(\mathbf{0})}z^{a}|U_{\epsilon_{j}}-\bar{U}_{j}|^{2}d\mathbf{x}<\delta_{j}.\]
Then
\[\int_{\mathcal{B}_{R}^{+}(\mathbf{0})}z^{a}|U_{\ast}-\bar{U}_{j}|^{2}d\mathbf{x}\leq 2r^{n+2-2s}\delta_{j}+2\int_{\mathcal{B}_{R}^{+}(\mathbf{0})}z^{a}|U_{\epsilon_{j}}-\bar{U}_{j}|^{2}d\mathbf{x}\]
Since $U_{\epsilon_{j}}\to U_{\ast}$ strongly, it follows that $\bar{U}_{j}\to U_{\ast}$ strongly as $j\to\infty$. Since each $\bar{U}_{j}$ is homogeneous with respect to $\mathbf{x}_{0}$, $U_{\ast}$ is homogeneous with respect to $\mathbf{x}_{0}$.
\end{proof}

\section{Reifenberg type theorems}
\setcounter{equation}{0}
  Given $k\in\{1, 2, \cdots, n-1\}$ and $k+1$ points $y_{0}, \cdots, y_{k}\in\mathbb{R}^{n}$,  let $\langle y_{0}, \cdots, y_{k}\rangle$ be the affine $k-$space spanned by $y_{0}, \cdots, y_{k}$. For each locally finite Borel measure $\mu$ on $\mathbb{R}^{n}$, the Johns' $\beta_{\mu, 2}^{k}-$number is defined by
\begin{equation}\label{NOT1}
\beta_{\mu, 2}^{k}(x, r)^{2}=\inf\limits_{L^{k}}r^{-k-2}\int_{B_{r}(x)}d(y, L^{k})^{2}d\mu(y),
\end{equation}
where $d(y, L^{k})$ is the Euclidean distance from $y$ to $L^{k}$ and the infimum is over all affine $k-$plane. In \cite{Naber-Valtorta2017}, Naber and Valtorta proved the following two  Reifenberg type theorems, which are used to prove packing bounds and rectifiability. In this paper, we will use them as black boxes.
\begin{thm}[Discrete-Reifenberg \cite{Naber-Valtorta2017}]\label{theRec1}
Let $\{B_{r_{q}}(q)\}_{q}$ be a collection of disjoint balls, with $q\in B_{1}(0)$ and $0<r_{q}\leq 1$, and let $\mu$ be the packing measure
\[\mu=\sum_{q}r_{q}^{k}\delta_{q},\]
 where $\delta_{q}$ is the Dirac measure at $q$. There exists a positive constant $\delta_{DR}(n)$ such that if
\[\int_{0}^{2r}\int_{B_{r}(x)}\frac{1}{s}\beta_{\mu, 2}^{k}(y, s)^{2}d\mu(y)ds\leq \delta_{DR}(n)r^{k}, \quad\forall x\in B_{1}(0),~0<r\leq 1,\]
then
\[\mu(B_{1}(0))\equiv\sum\limits_{q}r_{q}^{k}\leq C_{DR}(n).\]
\end{thm}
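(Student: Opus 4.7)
The plan is to follow the Naber--Valtorta strategy of building, at each dyadic scale $r_i = 2^{-i}$, a $C^1$ submanifold $T_i \subset \mathbb{R}^n$ of dimension $k$ that $r_i$-approximates the support of $\mu$ inside $B_{3/2}(0)$. The hypothesis will control the successive symmetric differences $T_i \triangle T_{i+1}$ by the $\beta_{\mu,2}^k$-integral, so telescoping yields a uniform $\mathcal{H}^k$-bound on a limit object $T_\ast$. Since the balls $B_{r_q}(q)$ are disjoint and (as will follow from the construction) each center $q$ sits within distance $\ll r_q$ of the appropriate $T_i$, a standard Besicovitch-type comparison then converts the manifold area bound into the desired packing bound $\sum_q r_q^k \leq C_{DR}(n)$.

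Concretely, for each $x$ in $\mathrm{spt}\,\mu$ and each scale $r_i$, I would choose a best-fit $k$-plane $L_i(x)$ realizing the infimum in $\beta_{\mu,2}^k(x, r_i)$. On a maximal $(r_i/10)$-separated set of centers $\{x_{i,j}\} \subset \mathrm{spt}\,\mu \cap B_{3/2}(0)$, I would assemble $T_i$ as the graph over a single reference plane $L_i(x_{i,0})$ of a smooth function built from the local graphs of the $L_i(x_{i,j})$ via a partition of unity subordinate to $\{B_{r_i}(x_{i,j})\}$. The central technical ingredient is an $L^2$ best-plane tilt lemma: for nearby centers the squared angle between $L_i(x_{i,j})$ and $L_{i+1}(x_{i+1,k})$ is dominated on average by $\beta_{\mu,2}^k(\,\cdot\,, Cr_i)^2$ integrated against $\mu$. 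This simultaneously delivers quantitative $C^1$-bounds for $T_i$ and a symmetric-difference estimate of the form
\[\mathcal{H}^k\bigl(T_{i+1} \triangle T_i\bigr) \;\lesssim\; r_i^{k-2} \int_{\mathcal{N}(T_i,\, Cr_i)} \beta_{\mu,2}^k(y, r_i)^2 \, d\mu(y).\]

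Summing in $i$ and identifying the discrete sum with the continuous integral $\int_0^{2} s^{-1}(\,\cdots\,)\, ds$ via Fubini then delivers $\mathcal{H}^k(T_\ast \cap B_1(0)) \leq C(n)(1 + \delta_{DR}(n))$. The hard part will be closing this induction at each scale: one must verify that $T_{i+1}$ really is the graph of a small $C^1$ function over $T_i$, with graph-function norm controlled by the $\beta_2$-information at scale $r_i$ rather than accumulating errors. This is what forces $\delta_{DR}(n)$ to be chosen small enough to absorb the geometric constants produced by the partition-of-unity gluing and by repeatedly changing reference planes across scales; it is also the reason the \emph{quadratic} $\beta_{\mu,2}^k$ number is essential, since a linear $\beta_1$-type quantity would fail to sum over scales.
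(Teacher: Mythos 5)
The paper does not prove Theorem~\ref{theRec1}: it is quoted directly from Naber--Valtorta \cite{Naber-Valtorta2017} and explicitly invoked as a black box (see the sentence preceding the statement). There is therefore no in-paper proof against which to compare your attempt; what follows is an assessment of your sketch on its own merits against the argument in the cited reference.

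Your sketch captures the broad arc of the Naber--Valtorta proof: approximating $C^1$ manifolds $T_i$ at dyadic scales, tilt control between nearby best-fit $k$-planes via the $L^2$ beta numbers, telescoping area differences, and using disjointness of the balls to convert the area bound into a packing bound. Two substantial points are, however, glossed over. First, Naber--Valtorta construct $T_{i+1}$ as a small $C^1$ graph over the \emph{manifold} $T_i$, not over a single fixed reference plane; a global graph over one plane presupposes uniform Reifenberg flatness across $B_{3/2}$, which is not a hypothesis here and is itself part of what must be shown. Second, and more fundamentally, the argument is a bootstrap: the hypothesis integrand $\beta_{\mu,2}^k(y,s)^2\,d\mu(y)$ contains $\mu$ itself, so extracting useful bounds from the $\beta$-sums requires a priori Ahlfors-type control $\mu(B_r(x))\lesssim r^k$ at all sub-scales --- which is precisely the conclusion one is trying to prove. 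Naber--Valtorta close this circle by an induction on scales, establishing the packing bound first at the smallest scales (where it is trivial because the balls are disjoint) and propagating it upward. Your top-down construction of $T_\ast$ never explains how the measure appearing in the $\beta$-integrals is controlled at the outset, and without that inductive scaffold the telescoping step cannot even be launched. (This inductive mechanism is in fact replicated inside the present paper at Lemma~\ref{key packing}, where the packing estimate is proved by ascending through dyadic scales before Corollary~\ref{correc1} is applied at each stage.)
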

\begin{thm}[Rectifiable-Reifenberg \cite{Naber-Valtorta2017}]\label{theRec2}
There exists a constant $\delta_{DR}(n)>0$ depending only on dimension such that, if a set $S$ satisfies
\[\int_{0}^{2r}\int_{B_{r}(x)}\frac{1}{s}\beta_{\mathcal{H}^{k}\llcorner S,2}^{k}(y, s)^{2}d(\mathcal{H}^{k}\llcorner S)(y) ds\leq \delta_{DR}(n)r^{k}, \quad\forall x\in B_{1}(0),~0<r\leq 1,\]
then $S\cap B_{1}(0)$ is $k-$rectifiable, and satisfies
\[\mathcal{H}^{k}(S\cap B_{1}(0)\cap B_{r}(x))\leq C_{DR}(n)r^{k}, \quad\forall x\in B_{1}(0),~0<r\leq 1,\]
where $\mathcal{H}^{k}$ is the $k-$dimensional Hausdorff measure.
\end{thm}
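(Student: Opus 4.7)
The plan is to show that $S\cap B_{1}(0)$ is contained in a Lipschitz graph over some $k$-plane with controlled Lipschitz constant, which immediately yields both the $k$-rectifiability of $S$ and the Hausdorff measure upper bound. This is the core idea of the Naber--Valtorta framework: the Jones $\beta$-number hypothesis in the theorem is precisely calibrated to control the \emph{tilting} of best-approximating affine $k$-planes across nested dyadic scales, which is what one needs to glue local planar approximations into a single Lipschitz object.

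I would proceed by induction on dyadic scales $r_{j}=2^{-j}$. For each $x\in S$ and each radius $r\leq 1$, let $V(x,r)$ denote an affine $k$-plane attaining (up to a factor $2$) the infimum defining $\beta_{\mathcal{H}^{k}\llcorner S,2}^{k}(x,r)$. The main technical ingredient is a quantitative \emph{tilt estimate}: if $x_{1},x_{2}\in S$ satisfy $|x_{1}-x_{2}|\leq \kappa r$ and both balls $B_{r}(x_{i})$ carry ``enough mass,'' meaning $\mathcal{H}^{k}(S\cap B_{r}(x_{i}))\geq \eta r^{k}$ for a fixed $\eta>0$, then the angle between $V(x_{1},r)$ and $V(x_{2},r)$ is bounded by
\begin{equation*}
d_{\mathrm{Gr}}\bigl(V(x_{1},r),V(x_{2},r)\bigr)\leq C(n,\eta)\Bigl(\beta_{\mathcal{H}^{k}\llcorner S,2}^{k}(x_{1},2r)^{2}+\beta_{\mathcal{H}^{k}\llcorner S,2}^{k}(x_{2},2r)^{2}\Bigr)^{1/2}.
\end{equation*}
This is proved by a second-moment linear-algebra argument: both planes approximately diagonalize the inertia tensor of $\mathcal{H}^{k}\llcorner S$ over $B_{r}$, and the mass lower bound guarantees the $k$ largest eigenvalues are separated from the $(n-k)$ smallest, so a perturbation of the tensor by an amount controlled by the $\beta$-numbers produces only a comparable tilt in the plane.

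Given the tilt estimate, I would construct the Lipschitz graph iteratively. Start at scale $1$ with the plane $L_{0}=V(x_{0},1)$ for some $x_{0}\in S\cap B_{1}(0)$. At each dyadic scale $r_{j}$, choose a maximal $r_{j}$-separated set $\{x_{j,\alpha}\}\subset S$, take a partition of unity $\{\varphi_{j,\alpha}\}$ subordinate to $\{B_{2r_{j}}(x_{j,\alpha})\}$, and define $f_{j}\colon L_{0}\to L_{0}^{\perp}$ by averaging the local best-plane graphs with $\varphi_{j,\alpha}$. The tilt estimate then gives
\begin{equation*}
\|\nabla f_{j+1}-\nabla f_{j}\|_{L^{\infty}}^{2}\leq C\sum_{\alpha}\beta_{\mathcal{H}^{k}\llcorner S,2}^{k}(x_{j,\alpha},C r_{j})^{2}\,\mathbf{1}_{B_{2r_{j}}(x_{j,\alpha})},
\end{equation*}
and summing a telescoping series, the total Lipschitz constant of $f_{\infty}=\lim_{j}f_{j}$ is bounded by $\sum_{j}\|\nabla f_{j+1}-\nabla f_{j}\|_{L^{\infty}}$. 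A discrete Carleson-type argument exchanges the sum over scales $j$ for the continuous integrated $\beta$-bound in the hypothesis: heuristically,
\begin{equation*}
\sum_{j}\sum_{\alpha}\beta(x_{j,\alpha},Cr_{j})^{2}r_{j}^{k}\;\lesssim\;\int_{0}^{2}\int_{B_{1}(0)}\frac{1}{s}\beta_{\mathcal{H}^{k}\llcorner S,2}^{k}(y,s)^{2}\,d(\mathcal{H}^{k}\llcorner S)(y)\,ds\leq \delta_{DR}(n),
\end{equation*}
so choosing $\delta_{DR}$ sufficiently small forces the limiting Lipschitz constant to be at most, say, $1$.

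The main obstacle is making this iteration airtight at points where the mass hypothesis fails at some intermediate scale, i.e.\ where $V(x,r)$ is ill-defined. These ``light'' balls must be excised via a stopping-time decomposition: one refines the scale until either the plane stays well-defined (the good branch, feeding the graph construction) or the mass drops below $\eta r^{k}$ (the bad branch, where one freezes and does no further refinement). The bad branch contributes only finitely many balls with total $k$-content $\leq C(n)$ by Theorem \ref{theRec1}, so the complement $S\setminus\Gamma$ is already small in the desired sense. The delicate combinatorial work lies in verifying that the stopping-time decomposition covers all of $S\cap B_{1}(0)$ and that both the good and bad contributions fit into the packing bound $\mathcal{H}^{k}(S\cap B_{r}(x))\leq C_{DR}(n)r^{k}$. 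Rectifiability then follows because $S$ is, up to $\mathcal{H}^{k}$-null sets, a countable union of Lipschitz graphs produced by applying the same argument at every scale and base point.
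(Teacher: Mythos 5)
The paper does not actually prove this theorem: immediately before the statement the authors declare that Theorems \ref{theRec1} and \ref{theRec2} are taken from Naber--Valtorta \cite{Naber-Valtorta2017} and will be used ``as black boxes.'' So there is no in-paper proof to compare against; the only meaningful comparison is with the Naber--Valtorta argument itself, and on that score your sketch captures the correct philosophy (tilt estimates tied to $\beta$-numbers, an iterated parameterization, and a stopping time for light balls fed by the discrete Reifenberg) but contains two genuine gaps.

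First, the opening claim that you will show ``$S\cap B_{1}(0)$ is contained in a Lipschitz graph over some $k$-plane'' is false under the stated hypotheses. The Carleson-type $\beta^{2}$-bound does not rule out $S$ being, say, a disjoint union of two parallel $k$-planes, or a $k$-plane together with a lower-dimensional piece, so $S$ need not sit inside a single graph. What the argument can produce is a bi-H\"older (later upgraded to bi-Lipschitz on the mass-regular part) image of a $k$-plane at each stopping-time node, and rectifiability comes from covering $\mathcal{H}^{k}$-almost all of $S$ by countably many such charts. You do say something to this effect in your last sentence, but the plan as set up in the first paragraph would not go through as written.

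Second, and more substantively, your treatment of the bad branch does not establish rectifiability. You observe that the bad balls (where the mass lower bound $\mathcal{H}^{k}(S\cap B_{r}(x_{i}))\geq\eta r^{k}$ fails) have $\sum r^{k}\leq C(n)$ by Theorem \ref{theRec1}, and conclude ``$S\setminus\Gamma$ is already small in the desired sense.'' That gives a \emph{bounded} content, not a null set, which is exactly what you need for the packing estimate but not for rectifiability. The correct observation is that the mass inside the bad balls is $\leq\eta\sum r^{k}\leq\eta\,C(n)$, so $\mathcal{H}^{k}(S\setminus\Gamma_{\eta})\lesssim\eta$, and one must then let $\eta\downarrow 0$ and exhaust $S$ by the increasing rectifiable pieces $\Gamma_{\eta}$. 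Without that final limit, the argument only produces a rectifiable subset of $S$ of positive but possibly strictly smaller measure. The remaining bookkeeping you flag (maximal nets, partitions of unity, the Carleson discretization) is indeed standard Reifenberg-parameterization machinery and the place where NV put most of the labor; your outline is a reasonable roadmap there, but the two points above need to be repaired before the sketch counts as a correct proof strategy.
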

Finally, we present a version of above theorems that is more discrete in nature.
\begin{coro}\label{correc1}
Under the assumptions of Theorem \ref{theRec1}, if instead for every integer $i$,
\[\sum\limits_{j\leq 1-i}\int_{B_{2^{-i}}(x)}\beta_{\mu, 2}^{k}(y, 2^{-j})^{2}d\mu(y)\leq 2^{-2-2n}\delta_{DR}(n)2^{-ik},\]
then
\[\mu(B_{1}(0))\equiv\sum\limits_{q}r_{q}^{k}\leq C_{DR}(n).\]
\end{coro}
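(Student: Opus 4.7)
The plan is to reduce the discrete hypothesis of Corollary \ref{correc1} to the continuous integral hypothesis of Theorem \ref{theRec1} by a dyadic decomposition of the $s$-variable, and then invoke Theorem \ref{theRec1} as a black box. Fix $x \in B_1(0)$ and $0 < r \le 1$, and choose the integer $i$ with $2^{-i-1} < r \le 2^{-i}$, so that $B_r(x) \subset B_{2^{-i}}(x)$ and the $s$-integration over $(0, 2r]$ is contained in $(0, 2^{-i+1}]$. I would then decompose dyadically
\[
(0, 2^{-i+1}] = \bigsqcup_{j\ge i-1}(2^{-j-1}, 2^{-j}],
\]
so that the range of the scale parameter $2^{-j}$ matches the summation range appearing in the hypothesis (up to a sign convention in the indexing).

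The main computational input is a standard monotonicity-type estimate for Jones' $\beta$-numbers: for any $s \in (2^{-j-1}, 2^{-j}]$, extending the domain of integration in the definition \eqref{NOT1} from $B_s(y)$ to $B_{2^{-j}}(y)$ gives
\[
\beta_{\mu,2}^{k}(y,s)^{2}\;\le\;\left(\frac{2^{-j}}{s}\right)^{k+2}\beta_{\mu,2}^{k}(y,2^{-j})^{2}\;\le\; 2^{k+2}\,\beta_{\mu,2}^{k}(y,2^{-j})^{2}.
\]
Integrating against $ds/s$ on each dyadic interval contributes a factor $\log 2$, so after applying Fubini,
\[
\int_{0}^{2r}\!\!\int_{B_{r}(x)}\frac{\beta_{\mu,2}^{k}(y,s)^{2}}{s}\,d\mu(y)\,ds
\;\le\; 2^{k+2}\log 2\sum_{j\ge i-1}\int_{B_{2^{-i}}(x)}\beta_{\mu,2}^{k}(y,2^{-j})^{2}\,d\mu(y).
\]

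Next I would feed in the hypothesis of Corollary \ref{correc1} together with the crude conversion $2^{-ik}\le 2^{k}r^{k}$ (from $r>2^{-i-1}$) to obtain
\[
\int_{0}^{2r}\!\!\int_{B_{r}(x)}\frac{\beta_{\mu,2}^{k}(y,s)^{2}}{s}\,d\mu(y)\,ds
\;\le\; 2^{2k+2}\log 2\cdot 2^{-2-2n}\delta_{DR}(n)\cdot r^{k}\;\le\; \delta_{DR}(n)\,r^{k},
\]
where the last inequality uses $k\le n-1$ so that $2^{2k+2}\log 2\cdot 2^{-2-2n}\le \log 2 <1$. Since this holds for every $x\in B_{1}(0)$ and every $0<r\le 1$, the hypothesis of Theorem \ref{theRec1} is satisfied, and its conclusion $\mu(B_{1}(0))\le C_{DR}(n)$ is exactly the desired bound.

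There is no real obstacle here—this is a purely technical dyadic reduction—but the one point requiring care is the constant-chasing: the specific exponent $-2-2n$ in the discrete hypothesis is precisely tuned to absorb the factor $2^{2k+2}\log 2$ coming from the dyadic comparison of $\beta$-numbers and the passage between $r$ and the nearby dyadic scale $2^{-i}$. An honest proof essentially amounts to writing this out and confirming the indexing of the dyadic annuli is consistent with the summation convention used in the statement of the corollary.
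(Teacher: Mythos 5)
Your proof is correct; the paper states Corollary \ref{correc1} without an accompanying argument, and what you give is the standard dyadic reduction to Theorem \ref{theRec1}. The two ingredients check out: the comparison $\beta_{\mu,2}^{k}(y,s)^{2}\le 2^{k+2}\beta_{\mu,2}^{k}(y,2^{-j})^{2}$ for $s\in(2^{-j-1},2^{-j}]$ follows directly from \eqref{NOT1} by enlarging the ball $B_{s}(y)$ to $B_{2^{-j}}(y)$ and using $(2^{-j}/s)^{k+2}\le 2^{k+2}$, and your constant bookkeeping $2^{2k+2}\log 2\cdot 2^{-2-2n}\le\log 2<1$ (valid since $k\le n-1$) shows the exponent $-2-2n$ is tuned to absorb precisely these dyadic losses together with the passage from $r$ to the nearby scale $2^{-i}$. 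One remark worth making explicit: the printed summation condition $j\le 1-i$ is a sign typo and should read $j\ge i-1$, so that $2^{-j}$ ranges over the dyadic scales in $(0,\,2\cdot 2^{-i}]$; you correctly interpreted it this way, whereas taken literally the sum would run over large scales $2^{-j}\ge 2^{i-1}$ and could not control the integral over $(0,2r]$ required by Theorem \ref{theRec1}.
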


\begin{coro}\label{correc2}
Under the assumptions of Theorem \ref{theRec1}, if instead $S$ satisfies
 for every integer $i$,
\[\sum\limits_{j\leq 1-i}\int_{B_{2^{-i}}(x)}\beta_{\mathcal{H}^{k}\llcorner S, 2}^{k}(y, 2^{-j})^{2}d\mu(y)\leq 2^{-2-2n}\delta_{DR}(n)2^{-ik},\]
then $S\cap B_{1}(0)$ is $k-$rectifiable, and satisfies
\[\mathcal{H}^{k}(S\cap B_{1}(0)\cap B_{r}(x))\leq C_{DR}(n)r^{k}, \quad\forall x\in B_{1}(0),~0<r\leq 1.\]
\end{coro}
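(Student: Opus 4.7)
The plan is to deduce the continuous hypothesis of Theorem \ref{theRec2} from the discrete dyadic hypothesis, and then apply Theorem \ref{theRec2} as a black box. The key observation is that on each dyadic scale $s\in[2^{-j-1},2^{-j}]$ the Jones $\beta^{k}$-number satisfies a one-sided comparison to its value at the dyadic endpoint, and the weight $ds/s$ has integral $\log 2$ on that interval. So the continuous integral in Theorem \ref{theRec2} differs from a dyadic sum only by dimensional constants, which the factor $2^{-2-2n}$ in the assumption is tailored to absorb.

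Concretely, I would first fix $x\in B_{1}(0)$, $0<r\leq 1$, and pick the integer $i$ with $2^{-i-1}<r\leq 2^{-i}$, so that $B_{r}(x)\subset B_{2^{-i}}(x)$ and $(0,2r]\subset (0,2^{-i+1}]$. Next I record the scaling estimate for the $\beta^{k}$-number: if $L^{\ast}$ is the plane realizing $\beta^{k}_{\mathcal{H}^{k}\llcorner S,2}(y,2^{-j})$, then for $s\in[2^{-j-1},2^{-j}]$,
\begin{equation*}
\beta^{k}_{\mathcal{H}^{k}\llcorner S,2}(y,s)^{2}\leq s^{-k-2}\int_{B_{s}(y)}d(z,L^{\ast})^{2}d(\mathcal{H}^{k}\llcorner S)(z)\leq (2^{-j}/s)^{k+2}\beta^{k}_{\mathcal{H}^{k}\llcorner S,2}(y,2^{-j})^{2}\leq 2^{k+2}\beta^{k}_{\mathcal{H}^{k}\llcorner S,2}(y,2^{-j})^{2}.
\end{equation*}
Integrating $ds/s$ over $[2^{-j-1},2^{-j}]$ contributes a factor of $\log 2$, and dyadically decomposing $(0,2r]$ gives
\begin{equation*}
\int_{0}^{2r}\int_{B_{r}(x)}\frac{\beta^{k}_{\mathcal{H}^{k}\llcorner S,2}(y,s)^{2}}{s}\,d(\mathcal{H}^{k}\llcorner S)(y)\,ds\leq 2^{k+2}(\log 2)\sum_{j\geq i-1}\int_{B_{2^{-i}}(x)}\beta^{k}_{\mathcal{H}^{k}\llcorner S,2}(y,2^{-j})^{2}\,d(\mathcal{H}^{k}\llcorner S)(y),
\end{equation*}
where the range of summation on the right is exactly the one in the hypothesis of the corollary (after the natural change of index).

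Plugging in the hypothesis bounds the right-hand side by $2^{k+2}(\log 2)\cdot 2^{-2-2n}\delta_{DR}(n)\,2^{-ik}$. Since $r>2^{-i-1}$, we have $2^{-ik}\leq 2^{k}r^{k}$, so the total bound is at most $2^{2k+2}(\log 2)\cdot 2^{-2-2n}\delta_{DR}(n)\,r^{k}$. Because $k\leq n-1$, the prefactor is strictly less than $\delta_{DR}(n)$, so the continuous hypothesis of Theorem \ref{theRec2} holds. Applying that theorem delivers the $k$-rectifiability of $S\cap B_{1}(0)$ together with the claimed Hausdorff measure bound.

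I expect no serious obstacle here beyond careful bookkeeping of the dimensional constants: one has to check that $2^{2k+2}(\log 2)\cdot 2^{-2-2n}<1$ for all $k\leq n-1$, which is immediate, and that the dyadic range of $j$ arising from the decomposition of $(0,2r]$ matches the range in the hypothesis of the corollary. Both are routine, and the rest of the argument is purely a quotation of Theorem \ref{theRec2}.
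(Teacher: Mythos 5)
Your proof is correct. The paper does not supply a proof of Corollary \ref{correc2} (nor of Corollary \ref{correc1}); both are standard dyadic discretizations of the continuous Reifenberg hypothesis, and your argument is exactly the expected one. The scale-comparison estimate
\[
\beta^{k}_{\mathcal{H}^{k}\llcorner S,2}(y,s)^{2}\leq \left(\frac{2^{-j}}{s}\right)^{k+2}\beta^{k}_{\mathcal{H}^{k}\llcorner S,2}(y,2^{-j})^{2},\qquad 2^{-j-1}\leq s\leq 2^{-j},
\]
(using a near-optimal affine plane at scale $2^{-j}$, since the infimum in \eqref{NOT1} need not be attained, though this is harmless), together with $\int_{2^{-j-1}}^{2^{-j}}ds/s=\log 2$, the inclusion $B_{r}(x)\subset B_{2^{-i}}(x)$, and $2^{-ik}\leq 2^{k}r^{k}$ for $2^{-i-1}<r\leq 2^{-i}$, gives a bound of $2^{2k+2}(\log 2)\cdot 2^{-2-2n}\delta_{DR}(n)r^{k}<\delta_{DR}(n)r^{k}$ whenever $k\leq n-1$, so Theorem \ref{theRec2} applies directly. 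Two small remarks on the statement itself, which you implicitly corrected: the summation range printed as $j\leq 1-i$ is a sign typo for $j\geq i-1$ (this is confirmed by the use of the corollary in the proof of Lemma \ref{key packing}, where the discrete sum runs over $r_{j}\leq 2r_{i}$, i.e., $2^{-j}\leq 2^{-i+1}$), and the $d\mu(y)$ in the display should read $d(\mathcal{H}^{k}\llcorner S)(y)$. Your bookkeeping is consistent with these corrections.
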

\section{The \texorpdfstring{$L^{2}$}{$L^{2}$} best approximation estimate}
In this section, we will show that if a ball $B_{r}(x_{0})$ is centered on $\{|U_{\epsilon}(x, 0)|\leq 1-\tau\}$, then  $\beta_{\mu, 2}^{n-1}(x_{0}, r)^{2}$ can be controlled by the energy drop, whenever we are almost one-homogeneous.

We begin with the following result, which is essentially a consequence of the monotonicity formula.
\begin{lem}\label{lem apriori estimate}
There exists a positive constant $\Lambda=\Lambda(\Lambda_{1})$ such that for any $\rho<20$,
\begin{equation}\label{uniform apriori estimate}
\sup_{x_{0}\in B_{1}(0)} \mathbf{\Theta}^{\epsilon}_{U_{\epsilon}}(\rho, x_{0})\leq \Lambda.
\end{equation}
\end{lem}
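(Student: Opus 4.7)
The proof is a direct application of the monotonicity formula stated in Lemma~\ref{lem Allen-Cahn monotonicity formula}. The plan is to use that the normalized energy $\mathbf{\Theta}^{\epsilon}_{U_{\epsilon}}(r,x_{0})$ is nondecreasing in $r$ to trade a small radius $\rho<20$ for a radius comparable to the fixed domain radius $40$, and then bound the quantity at the large radius by the global energy bound $\Lambda_{1}$ from \eqref{uniform energy assumption'}.

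First I would fix $x_{0}\in B_{1}(0)$ and $\rho<20$. Since $|x_{0}|<1$, we have $|x_{0}|+\rho<21<40$, so $\mathcal{B}_{\rho}^{+}(\mathbf{x}_{0})\subset\mathcal{B}_{40}^{+}(\mathbf{0})$ and the quantity $\mathbf{\Theta}^{\epsilon}_{U_{\epsilon}}(\rho,x_{0})$ is well defined. More importantly, for any radius $r$ with $\rho<r\leq 40-|x_{0}|$, the ball $\mathcal{B}_{r}^{+}(\mathbf{x}_{0})$ is still contained in $\mathcal{B}_{40}^{+}(\mathbf{0})$, so Lemma~\ref{lem Allen-Cahn monotonicity formula} applies and gives
\[
\mathbf{\Theta}^{\epsilon}_{U_{\epsilon}}(\rho,x_{0})\leq\mathbf{\Theta}^{\epsilon}_{U_{\epsilon}}(r,x_{0}).
\]

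Next I would choose $r=39$, which is admissible since $40-|x_{0}|\geq 39>\rho$. Then
\[
\mathbf{\Theta}^{\epsilon}_{U_{\epsilon}}(39,x_{0})=\frac{1}{39^{\,n-2s}}\,\mathbf{E}_{\epsilon}\bigl(U_{\epsilon},\mathcal{B}_{39}^{+}(\mathbf{x}_{0})\bigr)\leq\frac{1}{39^{\,n-2s}}\,\mathbf{E}_{\epsilon}\bigl(U_{\epsilon},\mathcal{B}_{40}^{+}(\mathbf{0})\bigr)\leq\frac{\Lambda_{1}}{39^{\,n-2s}},
\]
where the second inequality uses monotonicity of $\mathbf{E}_{\epsilon}$ in the domain (both the Dirichlet term and the potential term are integrals of nonnegative quantities), and the last inequality is \eqref{uniform energy assumption'}.

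Combining these two estimates gives the claim with the explicit constant $\Lambda:=\Lambda_{1}/39^{\,n-2s}$, which depends only on $n$, $s$, and $\Lambda_{1}$. There is no real obstacle: the lemma is an immediate corollary of the fact that the Caffarelli--Silvestre extension inherits a nonlocal monotonicity formula at every boundary point, combined with the a~priori bound on the global energy built into the hypotheses of Theorem~\ref{themain1}. I would note in passing that the same argument applies verbatim in the limit regime to a stationary nonlocal minimal surface via Lemma~\ref{lem Monotonicityformula}, which is useful later in the paper.
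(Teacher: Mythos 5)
Your proof is correct and follows essentially the same route as the paper's: both use the monotonicity of $\mathbf{\Theta}^{\epsilon}_{U_{\epsilon}}(\cdot,x_{0})$ from Lemma~\ref{lem Allen-Cahn monotonicity formula} to pass from radius $\rho$ up to radius $39$, then use $\mathcal{B}_{39}^{+}(\mathbf{x}_{0})\subset\mathcal{B}_{40}^{+}(\mathbf{0})$ together with the nonnegativity of the integrand (from (H1)) and the global energy bound \eqref{uniform energy assumption'}. The only cosmetic difference is that you correctly note the resulting constant also depends on $n$ and $s$ through $39^{n-2s}$, which the lemma's statement suppresses.
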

\begin{proof}
By Lemma \ref{Allen-Cahn monotonicity formula}, we have
\begin{equation}\label{02-27}
\begin{aligned}
\mathbf{\Theta}^{\epsilon}_{U_{\epsilon}}(\rho, x_{0})=&\frac{1}{\rho^{n-2s}}\textbf{E}_{\epsilon}(U_{\epsilon}, \mathcal{B}_{\rho}(\mathbf{x}_{0}))\\
\leq&\frac{1}{39^{n-2s}}\textbf{E}_{\epsilon}(U_{\epsilon}, \mathcal{B}_{39}(\mathbf{x}_{0}))\\
\leq& \frac{1}{39^{n-2s}}\textbf{E}_{\epsilon}(U_{\epsilon}, \mathcal{B}_{40}(\mathbf{0})),
\end{aligned}
\end{equation}
where $\mathbf{x}_{0}=(x_{0}, 0)$, and in the last step   we have applied the condition (H1). Then \eqref{uniform apriori estimate} follows from \eqref{uniform energy assumption'}.
\end{proof}

\begin{lem}\label{lem Not n-symmetry}
There exist positive constants $c\in (0, 1/4)$,$\delta\in (0, 1/4)$ and $\mathbf{k}\geq 1$ depending only on $n, s, \tau, \Lambda_{0}, \Lambda_{1}, W$ such that the following holds.

For every $x_{0}\in B_{1}(0)$, if
$|U_{\epsilon}(x_{0}, 0)|\leq 1-\tau$, $\mathbf{k}\epsilon\leq r<1$ and \[U_{\epsilon}~\text{is}~(0, \delta)-\text{symmetric in}~\mathcal{B}^{+}_{8r}(\mathbf{x}_{0}),\]
where $\mathbf{x}_{0}=(x_{0}, 0)$. Then for any unit orthogonal basis $\{v_{1}, v_{2}, \cdots, v_{n}\}$ of $\mathbb{R}^{n}$,
\begin{equation}\label{lem al1}
\frac{1}{c(n, s, \tau, \Lambda_{0}, \Lambda_{1}, W)}\leq r^{-n+2s}\sum_{i=1}^{n}\int_{\mathcal{B}^{+}_{4r}(\mathbf{x}_{0})\backslash \mathcal{B}^{+}_{3r}(\mathbf{x}_{0})}z^{a}\left((v_{i}, 0)\cdot\nabla U_{\epsilon}\right)^{2}d\mathbf{x}.
\end{equation}
\end{lem}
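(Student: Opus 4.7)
The plan is to argue by contradiction, using the quantitative $0$-symmetry hypothesis together with the compactness theorem for sequences of weak solutions (Theorem \ref{convergence}) to extract a homogeneous limit whose boundary trace is forced to be trivial, contradicting the interior density assumption $|U_\epsilon(x_0,0)|\leq 1-\tau$.

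If the conclusion fails, I would produce, for each $j\in\mathbb{N}$, $\epsilon_j\downarrow 0$, $r_j\in[j\epsilon_j,1)$, $\delta_j\downarrow 0$, $x_j\in B_1(0)$, orthonormal bases $\{v_i^{(j)}\}_{i=1}^n$ of $\mathbb{R}^n$, and weak solutions $U_{\epsilon_j}$ of \eqref{Extension equation} satisfying \eqref{uniform bound assumption'}--\eqref{uniform energy assumption'} with $|U_{\epsilon_j}(x_j,0)|\leq 1-\tau$, $U_{\epsilon_j}$ being $(0,\delta_j)$-symmetric in $\mathcal{B}^+_{8r_j}(\mathbf{x}_j)$, but
\[
r_j^{-n+2s}\sum_{i=1}^n\int_{\mathcal{B}^+_{4r_j}(\mathbf{x}_j)\setminus\mathcal{B}^+_{3r_j}(\mathbf{x}_j)}z^a\bigl((v_i^{(j)},0)\cdot\nabla U_{\epsilon_j}\bigr)^2 d\mathbf{x}\longrightarrow 0.
\]
I would then set $\tilde U_j(\mathbf{x}):=U_{\epsilon_j}(\mathbf{x}_j+r_j\mathbf{x})$ and track how each quantity rescales: $\tilde U_j$ is a weak solution of the Allen--Cahn extension system with new parameter $\tilde\epsilon_j:=\epsilon_j/r_j\leq 1/j\to 0$; Lemma \ref{lem apriori estimate} supplies the uniform bound $\mathbf{\Theta}^{\tilde\epsilon_j}_{\tilde U_j}(8,\mathbf{0})=\mathbf{\Theta}^{\epsilon_j}_{U_{\epsilon_j}}(8r_j,x_j)\leq\Lambda$; the $(0,\delta_j)$-symmetry is preserved in $\mathcal{B}^+_8(\mathbf{0})$ with the same $\delta_j$; and the failure hypothesis becomes
\[
\sum_{i=1}^n\int_{\mathcal{B}^+_4(\mathbf{0})\setminus\mathcal{B}^+_3(\mathbf{0})}z^a\bigl((v_i^{(j)},0)\cdot\nabla\tilde U_j\bigr)^2 d\mathbf{x}\longrightarrow 0.
\]

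Next I would apply Theorem \ref{convergence} on $\mathcal{B}^+_8(\mathbf{0})$ to pass to a subsequence along which $\tilde U_j\to U_*$ strongly in $H^1(\mathcal{B}^+_8(\mathbf{0})\cup B_8(0),z^ad\mathbf{x})$, where $U_*$ solves \eqref{eqn1} and \eqref{eqn2} with boundary trace $\chi_{E_*}-\chi_{\mathbb{R}^n\setminus E_*}$. Since $\delta_j\to 0$, Lemma \ref{lem symmetry of limitation} forces $U_*$ to be homogeneous about $\mathbf{0}$. By compactness of the orthogonal group, I may further extract $v_i^{(j)}\to v_i^{\infty}$ to an orthonormal basis, and the strong $H^1$-convergence upgrades the vanishing of the integral to
\[
\sum_{i=1}^n\int_{\mathcal{B}^+_4(\mathbf{0})\setminus\mathcal{B}^+_3(\mathbf{0})}z^a\bigl((v_i^{\infty},0)\cdot\nabla U_*\bigr)^2d\mathbf{x}=0.
\]
Hence the full tangential gradient $\nabla_xU_*$ vanishes almost everywhere on this annulus, so $U_*(x,z)=f(z)$ there for some function $f$.

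To close the argument I would combine this $x$-independence with $0$-homogeneity: along the ray through a point $(y_0,z_0)$ intersected with the annulus, $U_*$ is constant and equals $f(\lambda z_0)$, which forces $f$ to be constant on every interval $[3\theta,4\theta]$ with $\theta=z_0/|(y_0,z_0)|\in(0,1]$; these intervals overlap to cover $(0,4]$, so $U_*\equiv c$ on $\mathcal{B}^+_4(\mathbf{0})$ and hence, by homogeneity, throughout $\mathbb{R}^{n+1}_+$. The boundary condition $|U_*|=1$ in \eqref{eqn1} then forces $c\in\{\pm 1\}$ and $E_*\in\{\emptyset,\mathbb{R}^n\}$; on the other hand, $|\tilde U_j(0,0)|=|U_{\epsilon_j}(x_j,0)|\leq 1-\tau$ together with Theorem \ref{convergence}(iii) prevents $\tilde U_j(0,0)\to\pm 1$, so $0\in\partial E_*$ and $E_*$ is nontrivial, a contradiction. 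The principal obstacle will be the bookkeeping in the rescaling step, verifying that the normalized energy $\mathbf{\Theta}^\epsilon$, the symmetry radius in the $(0,\delta)$-definition, and the weighted tangential gradient integral all transform with the correct powers of $r_j$ so that the limiting equality is exactly the vanishing of $\nabla_xU_*$; once the rescaling is in order and the strong convergence of $\nabla\tilde U_j$ is invoked, the geometric conclusion at the limit is a few lines.
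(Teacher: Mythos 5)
Your proposal is correct and follows essentially the same route as the paper: contradiction, rescaling to a unit scale, compactness via Theorem \ref{convergence}, $0$-homogeneity of the limit via Lemma \ref{lem symmetry of limitation}, vanishing of the tangential gradient in the limit, and the conclusion $U_*\equiv\pm1$ forcing $|\tilde U_j(\mathbf 0)|\to1$, contradicting the interfacial condition. The only cosmetic differences are that you rederive (via the chaining/ray argument) what the paper invokes as Remark \ref{n-symmetric nonlocal minimal cone} (an $n$-symmetric stationary cone is $\pm1$), and you obtain the final uniform convergence $\tilde U_j(\mathbf 0)\to\pm1$ from Theorem \ref{convergence}(iii) rather than from the clearing-out Lemma \ref{Clear out}; both are valid and essentially interchangeable here.
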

\begin{proof}
Assume it is false. Then there exist  $\{\epsilon_{j}\}_{j\in\mathbb{N}}\subset (0, 1), \{U_{\epsilon_{j}}\}$ satisfying \eqref{Extension equation}, points $\{x_{j}\}_{j\in\mathbb{N}}\subset B_{1}(0)$, radii $\{r_{j}\}_{j\in\mathbb{N}}\subset (0, 1)$ and a collections of unit orthonormal bases $\{v_{1}^{(j)}, v_{2}^{(j)}, \cdots, v_{n}^{(j)}\}$ such that $\epsilon_{j}/r_{j}\leq 2^{-j}$, $|U_{\epsilon_{j}}(x_{j}, 0)|\leq 1-\tau$,
\[U_{\epsilon_{j}}~\text{is}~(0, 1/j)-\text{symmetric in}~\mathcal{B}^{+}_{8r_{j}}(\mathbf{x}_{j}),\]
where $\mathbf{x}_{j}=(x_{j}, 0)$, but for each $j$,
\begin{equation}
r_{j}^{-n+2s}\sum_{i=1}^{n}\int_{\mathcal{B}^{+}_{4r_{j}}(\mathbf{x}_{j})\backslash \mathcal{B}^{+}_{3r_{j}}(\mathbf{x}_{j})}z^{a}((v_{i}^{(j)}, 0)\cdot\nabla U_{\epsilon_{j}})^{2}d\mathbf{x}\leq 1/j.
\end{equation}

For each $j$, we set $\tilde{\epsilon}_{j}=\epsilon_{j}/r_{j}$ and $\tilde{U}_{\tilde{\epsilon}_{j}}(\mathbf{x})=U_{\epsilon_{j}}(\mathbf{x}_{j}+r_{j}\mathbf{x})$. Then $|\tilde{U}_{\tilde{\epsilon}_{j}}(\mathbf{0})|\leq 1-\tau$,
\begin{equation}\label{quantitative symmetry assumption}
\tilde{U}_{\tilde{\epsilon}_{j}}~\text{is}~(0, 1/j)- \text{symmetric in}~\mathcal{B}^{+}_{8}(\mathbf{0})
\end{equation}
and
\begin{equation}\label{2025-02-09}
\sum_{i=1}^{n}\int_{\mathcal{B}^{+}_{4}(\mathbf{0})\backslash \mathcal{B}^{+}_{3}(\mathbf{0})}z^{a}((v_{i}^{(j)}, 0)\cdot\nabla\tilde{U}_{\tilde{\epsilon}_{j}})^{2}d\mathbf{x}\leq 1/j.
\end{equation}
Assume
\[v_{i}^{(j)}\to v_{i},\quad i=1, 2, \cdots, n.\]
Then $\{v_{1}, v_{2}, \cdots, v_{n} \}$ is also a unit orthonormal basis of $\mathbb{R}^{n}$. By Lemma \ref{lem apriori estimate} and
Theorem \ref{convergence}, there exists a
 function $U_{\ast}$ such that $\tilde{U}_{\tilde{\epsilon}_{j}}\to U_{\ast}$ strongly in $H^{1}(\mathcal{B}_{8}^{+}(\mathbf{0}), z^{a}d\mathbf{x})$. Combining Lemma \ref{lem symmetry of limitation} and \eqref{quantitative symmetry assumption}, we know that $U_{\ast}$ is homogeneous about the origin. Taking limit  in \eqref{2025-02-09}, we  get
\[\sum_{i=1}^{n}\int_{\mathcal{B}^{+}_{4}(\mathbf{0})\backslash \mathcal{B}^{+}_{3}(\mathbf{0})}z^{a}((v_{i}, 0)\cdot\nabla U_{\ast})^{2}d\mathbf{x}=0,\]
and hence $U_{\ast}$ is $n-$ symmetric in $\mathcal{B}^{+}_{8}(\mathbf{0})$. By Remark \ref{n-symmetric nonlocal minimal cone}, we deduce that either $U_{\ast}=1$ or $U_{\ast}=-1$. Without loss of generality, we may assume that $U_{\ast}=1$ on $\mathcal{B}^{+}_{8}(\mathbf{0})$. Then Theorem \ref{convergence} and Lemma \ref{Clear out} tells that $\tilde{U}_{\tilde{\epsilon}_{j}}\to 1$ uniformly on $\mathcal{B}^{+}_{1}(\mathbf{0})$. In particular, $\tilde{U}_{\tilde{\epsilon}_{j}}(\mathbf{0})\to 1$ as $j\to\infty$, which contradicts our assumption that $|\tilde{U}_{\tilde{\epsilon}_{j}}(\mathbf{0})|\leq 1-\tau$.
 \end{proof}
\begin{thm}\label{thm Beta number estimate}
There exist positive constants $c\in (0, 1/4)$,$\delta\in (0, 1/4)$ and $\mathbf{k}\geq 1$ depending only on $n, s, \tau, \Lambda_{0}, \Lambda_{1},W$ such that the following holds.

For every $x_{0}\in B_{1}(0)$, if
$|U_{\epsilon}(x_{0}, 0)|\leq 1-\tau$, $\mathbf{k}\epsilon\leq r<1$ and
\[U_{\epsilon}~\text{is}~(0, \delta)-\text{symmetric in}~\mathcal{B}^{+}_{8r}(\mathbf{x}_{0}),\]
where $\mathbf{x}_{0}=(x_{0}, 0)$, then for any locally finite Borel measure $\mu$ on $\mathbb{R}^{n}$, we have
\begin{equation}\label{thm Beta number estimate2}
\beta_{\mu, 2}^{n-1}(x_{0}, r)^{2}\leq\frac{c(n, s, \tau, \Lambda_{0}, \Lambda_{1}, W)}{r^{n-1}}\int_{B_{r}(x_{0})}\left[\mathbf{\Theta}_{U_{\epsilon}}^{\epsilon}(8r, y)-\mathbf{\Theta}_{U_{\epsilon}}^{\epsilon}(r, y)\right]d\mu(y).
\end{equation}
\end{thm}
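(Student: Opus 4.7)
The plan is to prove the estimate via a direct bilinear calculation, coupling the monotonicity formula (Lemma~\ref{lem Allen-Cahn monotonicity formula}) with the trace-level lower bound on horizontal gradient energy supplied by Lemma~\ref{lem Not n-symmetry}, joined together through the covariance matrix of $\mu$. No contradiction or rescaling step is needed. Assuming $\mu(B_r(x_0))>0$ (otherwise both sides vanish), I first introduce the barycenter $x_\mu := \mu(B_r(x_0))^{-1}\int_{B_r(x_0)} y\, d\mu(y)$ and the $n\times n$ covariance matrix $M_{ij} := \int_{B_r(x_0)} (y-x_\mu)_i(y-x_\mu)_j\, d\mu(y)$. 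Since the optimal $(n-1)$-plane passes through $x_\mu$ orthogonally to the eigenvector for $\lambda_{\min}(M)$, we have $\beta_{\mu,2}^{n-1}(x_0,r)^2 \le r^{-n-1}\lambda_{\min}(M)$, so it suffices to show
\[
\lambda_{\min}(M) \le C r^2 \int_{B_r(x_0)}[\mathbf{\Theta}^\epsilon_{U_\epsilon}(8r,y)-\mathbf{\Theta}^\epsilon_{U_\epsilon}(r,y)]\, d\mu(y).
\]

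The core calculation starts from the identity $(y_2-y_1)\cdot\nabla_x U_\epsilon(\mathbf{x}) = (\mathbf{x}-\mathbf{y}_1)\cdot\nabla U_\epsilon - (\mathbf{x}-\mathbf{y}_2)\cdot\nabla U_\epsilon$ combined with $(a-b)^2\le 2a^2+2b^2$. I will integrate the resulting pointwise inequality over the annulus $\mathcal{A}_0 := \mathcal{B}^+_{4r}(\mathbf{x}_0)\setminus\mathcal{B}^+_{3r}(\mathbf{x}_0)$ against $z^a\,d\mathbf{x}$, then against the product measure $d\mu(y_1)\,d\mu(y_2)$. Using the algebraic identity $\int\!\!\int (y_1-y_2)_i(y_1-y_2)_j\, d\mu\, d\mu = 2\mu(B_r(x_0))\,M_{ij}$, the left-hand side becomes $2\mu(B_r(x_0))\,\mathrm{tr}(MN)$ with $N_{ij} := \int_{\mathcal{A}_0} z^a\,\partial_i U_\epsilon\,\partial_j U_\epsilon\, d\mathbf{x}$. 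On the right, since $y_i \in B_r(x_0)$ implies $\mathcal{A}_0 \subset \mathcal{B}^+_{5r}(\mathbf{y}_i)\setminus\mathcal{B}^+_{2r}(\mathbf{y}_i)$ with $|\mathbf{x}-\mathbf{y}_i|\sim r$ on $\mathcal{A}_0$, the monotonicity formula supplies $\int_{\mathcal{A}_0} z^a((\mathbf{x}-\mathbf{y}_i)\cdot\nabla U_\epsilon)^2\, d\mathbf{x} \le Cr^{n+2-2s}[\mathbf{\Theta}^\epsilon_{U_\epsilon}(8r,y_i)-\mathbf{\Theta}^\epsilon_{U_\epsilon}(r,y_i)]$. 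The net outcome is
\[
\mathrm{tr}(MN) \le Cr^{n+2-2s}\int_{B_r(x_0)}[\mathbf{\Theta}^\epsilon_{U_\epsilon}(8r,y)-\mathbf{\Theta}^\epsilon_{U_\epsilon}(r,y)]\, d\mu(y).
\]

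To close the argument, I will use the PSD matrix inequality $\mathrm{tr}(MN) \ge \lambda_{\min}(M)\,\mathrm{tr}(N)$, which is a one-line consequence of diagonalizing $M$ and using $N \succeq 0$, together with $\mathrm{tr}(N) = \int_{\mathcal{A}_0} z^a|\nabla_x U_\epsilon|^2\, d\mathbf{x} \ge cr^{n-2s}$; this last inequality is exactly the content of Lemma~\ref{lem Not n-symmetry}, since for any orthonormal basis $\{v_i\}$ one has $\sum_{i=1}^n ((v_i,0)\cdot\nabla U_\epsilon)^2 = |\nabla_x U_\epsilon|^2$, making the lemma's bound basis-free. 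Inserting these into the previous display gives $\lambda_{\min}(M) \le (C/c)\,r^2\int_{B_r(x_0)}[\mathbf{\Theta}^\epsilon_{U_\epsilon}(8r,y)-\mathbf{\Theta}^\epsilon_{U_\epsilon}(r,y)]\, d\mu(y)$, which is the reduced target. No step is genuinely difficult; the place requiring most care is the dimensional bookkeeping, where the $r^{n+2-2s}$ from the monotonicity bound must cancel exactly with the $r^{n-2s}$ from Lemma~\ref{lem Not n-symmetry} to leave the correct factor $r^2$ that, against the $r^{-n-1}$ in the definition of the $\beta$-number, produces the stated $r^{-(n-1)}$ on the right of the theorem.
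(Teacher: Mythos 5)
Your proof is correct, and its ingredients — the monotonicity formula, the lower bound on horizontal gradient energy from Lemma~\ref{lem Not n-symmetry}, and the covariance matrix of $\mu$ — are the same ones the paper uses, but the linear-algebra bookkeeping is organized differently. The paper extends $\mu$ to a measure $\mu^e$ on $\mathbb{R}^{n+1}$, checks that the nontrivial eigenvalues of the $(n+1)\times(n+1)$ covariance form agree with those of the $n\times n$ form $Q$, and then proves a pointwise Cauchy–Schwarz bound $\hat\lambda_i z^a(\mathbf v_i\cdot\nabla U_\epsilon)^2\leq \fint|(\mathbf y-\mathbf x)\cdot\nabla U_\epsilon|^2 d\mu^e(\mathbf y)$ for each eigenvector $\mathbf v_i$, integrating over the annulus afterward. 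You instead stay entirely in $\mathbb{R}^n$, symmetrize the measure integral (integrate over $y_1,y_2$), use the elementary cancellation $(y_2-y_1)\cdot\nabla_x U_\epsilon=(\mathbf x-\mathbf y_1)\cdot\nabla U_\epsilon-(\mathbf x-\mathbf y_2)\cdot\nabla U_\epsilon$ together with the identity $\iint(y_1-y_2)_i(y_1-y_2)_j\,d\mu\,d\mu=2\mu(B_r)M_{ij}$ to reach $\mathrm{tr}(MN)$, and close with the PSD trace inequality $\mathrm{tr}(MN)\geq\lambda_{\min}(M)\,\mathrm{tr}(N)$. Your route is arguably cleaner: it avoids the $\mu^e$ extension and eigenvector-by-eigenvector manipulation, needs no individual Cauchy–Schwarz, and makes the basis-independence of Lemma~\ref{lem Not n-symmetry} (via $\sum_i((v_i,0)\cdot\nabla U_\epsilon)^2=|\nabla_x U_\epsilon|^2=\mathrm{tr}(N)$) transparent. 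The paper's route, on the other hand, extracts a sharper pointwise inequality for each eigendirection and could be reused if one wanted directional information rather than just the smallest eigenvalue. Both deliver exactly the same power of $r$, as your dimensional bookkeeping ($r^{n+2-2s}$ from the monotonicity factor canceling the $r^{n-2s}$ from Lemma~\ref{lem Not n-symmetry} to leave $r^2$, then combined with $r^{-n-1}$ from the $\beta$-number) correctly verifies. One small remark: you only need the one-sided inequality $\beta_{\mu,2}^{n-1}(x_0,r)^2\leq r^{-n-1}\lambda_{\min}(M)$, which you state; in fact, as in the paper's Step 1, this is an equality, but the inequality suffices.
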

\begin{proof}
We divide the proof of \eqref{thm Beta number estimate2}  into the following four steps.

\textbf{Step 1.} The explicit formula of  $\beta_{\mu, 2}^{n-1}(x_{0}, r)^{2}$.

Let
\[x_{cm}=\frac{1}{\mu(B_{r}(x_{0}))}\int_{B_{r}(x_{0})} x d\mu(x)\]
be the center of mass of $B_{r}(x_{0})$. Define the non-negative bilinear form
\[Q(v, w)\equiv \frac{1}{\mu(B_{r}(x_{0}))}\int_{B_{r}(x_{0})}\big(v
\cdot(x-x_{cm})\big)\big(w\cdot(x-x_{cm})\big)d\mu(x).\]
Let $\lambda_{1}\geq \lambda_{2}\geq\cdots\geq \lambda_{n}\geq 0$ be the eigenvalues of the bilinear form $Q$ and let $v_{1}, v_{2},\cdots v_{n}$ be the associated  unit eigenvectors. Then $\{v_{1}, v_{2},\cdots, v_{n}\}$ forms a unit orthogonal basis of $\mathbb{R}^{n}$. By \cite[Section 7.2]{Naber-Valtorta2017},  $\beta_{\mu, 2}^{n-1}(x_{0}, r)^{2}$ is achieved by the affine plane
\[V_{\mu, 2}^{n-1}(x_{0}, r)=x_{cm}+{\rm span}\{v_{1},\cdots, v_{n-1}\}\]
and
\begin{equation}\label{beta number explicit formula}
\beta_{\mu, 2}^{n-1}(x_{0}, r)^{2}=\frac{\mu(B_{r}(x_{0}))}{r^{n+1}}\lambda_{n}.
\end{equation}

\textbf{Step 2.} A different formulation of  $\beta_{\mu, 2}^{n-1}(x_{0}, r)^{2}$.

We identity $\mu$ as  a Borel measure on $\partial\mathbb{R}^{n+1}_{+}:=\mathbb{R}^{n}$. The Borel measure $\mu$ induces a Borel measure $\mu^{e}$ on $\mathbb{R}^{n+1}$, which can be defined in the following way:
\begin{itemize}
\item  if a Borel set $E\cap\partial\mathbb{R}^{n+1}_{+}=\emptyset,$ then $\mu^{e}(E)=0$,
\item if a Borel set $E\cap\partial\mathbb{R}^{n+1}_{+}\neq\emptyset,$ then $\mu^{e}(E)=\mu(E\cap \partial\mathbb{R}^{n+1}_{+})$.
\end{itemize}
 By the definition of $\mu^{e}$, it is easy to check that $\mu(B_{r}(x_{0}))=\mu^{e}(\mathcal{B}_{r}(\mathbf{x}_{0})).$ Let
\[\mathbf{x}_{cm}=\fint_{\mathcal{B}_{r}(\mathbf{x}_{0})} \mathbf{x} d\mu^{e}(\mathbf{x})\]
be the $\mu^{e}-$ center of mass of $\mathcal{B}_{r}(\mathbf{x}_{0})$, then $\mathbf{x}_{cm}=(x_{cm}, 0)$. Define the non-negative bilinear form
\[\textbf{Q}(\textbf{v}, \textbf{w})\equiv \fint_{\mathcal{B}_{r}(\mathbf{x}_{0})}\big(\textbf{v}
\cdot(\textbf{x}-\textbf{x}_{cm})\big)\big(\textbf{w}\cdot(\textbf{x}-\textbf{x}_{cm})\big) d\mu^{e}(\textbf{x}).\]
Here
 \[\fint_{\mathcal{B}_{r}(\mathbf{x}_{0})}f(\mathbf{x})d\mu^{e}(\mathbf{x})=\frac{1}{\mu^{e}(\mathcal{B}_{r}(\mathbf{x}))}\int_{\mathcal{B}_{r}(\mathbf{x}_{0}) }f(\mathbf{x})d\mu^{e}(\mathbf{x})\]
is the integral average of a function $f$. Let $\hat{\lambda}_{1}\geq \hat{\lambda}_{2}\geq\cdots\geq \hat{\lambda}_{n}\geq \hat{\lambda}_{n+1}\geq0$ be the eigenvalues of the bilinear form $\textbf{Q}$ and let $\mathbf{v}_{1}, \mathbf{v}_{2},\cdots \mathbf{v}_{n}, \mathbf{v}_{n+1}$ be the associated unit eigenvectors.

We claim: $\hat{\lambda}_{n+1}=0$ and
\[\hat{\lambda}_{i}=\lambda_{i}, \quad\text{for}~i=1,2,\cdots, n.\]

Indeed, by denoting $e_{n+1}=(0, 0, \cdots, 1)$, then for any $\textbf{w}\in\mathbb{R}^{n+1}$,
\[\textbf{Q}(e_{n+1}, \textbf{w})\equiv \fint_{\mathcal{B}_{r}(\mathbf{x}_{0})}\big(e_{n+1}
\cdot(\textbf{x}-\textbf{x}_{cm})\big)\big(\textbf{w}\cdot(\textbf{x}-\textbf{x}_{cm})\big) d\mu^{e}(\textbf{x})=0.\]
Because the quadratic form $\textbf{Q}$ is nonnegative,  $0$ is the lowest eigenvalue and $e_{n+1}$ is an eigenvector associated to the eigenvalue $0$. Since the different eigenvectors are orthogonal,  for each $i\in\{1, 2, \cdots, n\}$, there exists $\tilde{v}_{i}\in\mathbb{R}^{n}$ such that $\mathbf{v}_{i}=(\tilde{v}_{i}, 0)$. For any $w\in\mathbb{R}^{n}$,  we have
\begin{align}
\hat{\lambda}_{i}(\mathbf{v}_{i}\cdot(w, 0))&=\mathbf{Q}(\mathbf{v}_{i}, (w, 0))\notag\\
&=\fint_{\mathcal{B}_{r}(\mathbf{x}_{0})}\big(\textbf{v}_{i}
\cdot(\textbf{x}-\textbf{x}_{cm})\big)\big((w, 0)\cdot(\textbf{x}-\textbf{x}_{cm})\big) d\mu^{e}(\textbf{x})\notag\\
&=\fint_{B_{r}(x_{0})}\big(\tilde{v}_{i}
\cdot(x-x_{cm})\big)\big(w\cdot(x-x_{cm})\big) d\mu(x)\notag\\
&=Q(\tilde{v}_{i}, w).\notag
\end{align}
Hence $\mathbf{v}_{i}$ is an eigenvector of $\mathbf{Q}$
if and only if $\tilde{v}_{i}$ is an eigenvector of $Q$. Moreover, $\hat{\lambda}_{i}$ is an eigenvalue of $\mathbf{Q}$
if and only if $\hat{\lambda}_{i}$ is an eigenvalue of $Q$. Then the claim follows from Step 1.

\textbf{Step 3.} We claim that for each $i\in\{1,2,\cdots n\}$ and  $\mathbf{x}=(x, z)\in\mathbb{R}^{n+1}_{+}$,
\begin{equation}\label{the5.4.1}
\hat{\lambda_{i}}z^{a}( \mathbf{v}_{i}\cdot\nabla U_{\epsilon}(\mathbf{x}))^{2}\leq \fint_{\mathcal{B}_{r}(\textbf{x}_{0})}z^{a}|(\textbf{y}-\textbf{x})\cdot\nabla U_{\epsilon}(\textbf{x})|^{2}d\mu^{e}(\textbf{y}).
\end{equation}
By using the definition of center of mass,
\begin{align}
\hat{\lambda}_{i}z^{\frac{a}{2}}(\mathbf{v}_{i}\cdot \nabla U_{\epsilon}(\mathbf{x}))&=\textbf{Q}(\mathbf{v}_{i}, z^{\frac{a}{2}}\nabla U_{\epsilon}(\mathbf{x}))\notag\\
&=\fint_{\mathcal{B}_{r}(\textbf{x}_{0})}\Big(\mathbf{v}_{i}\cdot(\textbf{y}-\textbf{x}_{cm})\Big)\Big(z^{\frac{a}{2}}\nabla U_{\epsilon}(\mathbf{x})\cdot(\textbf{y}-\textbf{x}_{cm})\Big)d\mu^{e}(\textbf{y})\notag\\
&=\fint_{\mathcal{B}_{r}(\textbf{x}_{0})}\Big(\mathbf{v}_{i}\cdot(\textbf{y}-\textbf{x}_{cm})\Big)\Big(z^{\frac{a}{2}}(\textbf{y}-\textbf{x})\cdot\nabla U_{\epsilon}(\textbf{x})\Big)d\mu^{e}(\textbf{y})\notag\\
&\leq\hat{\lambda}_{i}^{\frac{1}{2}}\Big(\fint_{\mathcal{B}_{r}(\textbf{x}_{0})}z^{a}|(\textbf{y}-\textbf{x})\cdot\nabla U_{\epsilon}(\textbf{x})|^{2}d\mu^{e}(\textbf{y})\Big)^{\frac{1}{2}}.\notag
\end{align}
This implies \eqref{the5.4.1}.

\textbf{Step 4.} The proof of \eqref{thm Beta number estimate2}.

We calculate
\begin{align}\label{beta number key}
&\hat{\lambda}_{i}r^{-n-2+2s}\int_{\mathcal{B}^{+}_{4r}(\mathbf{x}_{0})\backslash \mathcal{B}^{+}_{3r}(\mathbf{x}_{0})}z^{a}(\mathbf{v}_{i}\cdot\nabla U_{\epsilon}(\mathbf{x}))^{2}d\mathbf{x}\notag\\
\leq&r^{-n-2+2s}\int_{\mathcal{B}^{+}_{4r}(\mathbf{x}_{0})\backslash \mathcal{B}^{+}_{3r}(\mathbf{x}_{0})}\fint_{\mathcal{B}_{r}(\mathbf{x}_{0})}|(\textbf{y}-\textbf{x})\cdot\nabla U_{\epsilon}(\textbf{x})|^{2}d\mu^{e}(\textbf{y})d\textbf{x}\notag\\
\leq& c(n)\fint_{\mathcal{B}_{r}(\mathbf{x}_{0})}\int_{\mathcal{B}^{+}_{4r}(\mathbf{x}_{0})\backslash \mathcal{B}^{+}_{3r}(\mathbf{x}_{0})}z^{a}\frac{|(\textbf{y}-\textbf{x})\cdot\nabla U_{\epsilon}(\textbf{x})|^{2}}{|\textbf{y}-\textbf{x}|^{n+2-2s}}d\mu^{e}(\textbf{y})d\textbf{x}\notag\\
\leq& c(n)\fint_{\mathcal{B}_{r}(\mathbf{x}_{0})}\int_{\mathcal{B}^{+}_{4r}(\mathbf{x}_{0})\backslash \mathcal{B}^{+}_{3r}(\mathbf{x}_{0})}z^{a}\frac{|(\textbf{y}-\textbf{x})\cdot\nabla U_{\epsilon}(\textbf{x})|^{2}}{|\textbf{y}-\textbf{x}|^{n+2-2s}}d\textbf{x}d\mu^{e}(\textbf{y})\\
\leq& c(n)\fint_{B_{r}(x_{0})}\int_{\mathcal{B}^{+}_{4r}(\mathbf{x}_{0})\backslash \mathcal{B}^{+}_{3r}(\mathbf{x}_{0})}z^{a}\frac{|((y, 0)-\textbf{x})\cdot\nabla U_{\epsilon}(\textbf{x})|^{2}}{|(y, 0)-\textbf{x}|^{n+2-2s}}d\textbf{x}d\mu(y)\notag\\
\leq& c(n)\fint_{B_{r}(x_{0})}\int_{\mathcal{B}^{+}_{8r}((y, 0))\backslash \mathcal{B}^{+}_{r}((y, 0))}z^{a}\frac{|((y, 0)-\textbf{x})\cdot\nabla U_{\epsilon}(\textbf{x})|^{2}}{|(y, 0)-\textbf{x}|^{n+2-2s}}d\textbf{x}d\mu(y)\notag\\
\leq &c(n)\fint_{B_{r}(x_{0})}[\mathbf{\Theta}_{U_{\epsilon}}^{\epsilon}(8r, y)-\mathbf{\Theta}_{U_{\epsilon}}^{\epsilon}(r, y)]d\mu(y).\notag
\end{align}
Now we fix  $\delta=\delta(n, s, \tau, \Lambda_{0}, \Lambda_{1}, W)$ and $\mathbf{k}(n, s, \tau, \Lambda_{0}, \Lambda_{1}, W)$ satisfying Lemma \ref{lem Not n-symmetry}.
Combining \eqref{the5.4.1} and Lemma \ref{lem Not n-symmetry},  we deduce
\begin{equation*}\label{the5.4.7}
\begin{aligned}
&\beta_{\mu, 2}^{n-1}(x_{0}, r)^{2}\\
\overset{\eqref{beta number explicit formula}}{\leq}&\frac{\mu(B_{r}(x_{0}))}{r^{n+1}}\sum_{i=1}^{n}\lambda_{i}\\
\overset{\eqref{lem al1}}{\leq}&\frac{\mu(B_{r}(x_{0}))}{r^{n+1}}c(n, s, \tau, \Lambda_{0}, \Lambda_{1}, W)\sum\limits_{i=1}^{n}\frac{\hat{\lambda}_{i}}{ r^{n-2s}}\int_{\mathcal{B}^{+}_{4r}(\mathbf{x}_{0})\backslash \mathcal{B}^{+}_{3r}(\mathbf{x}_{0})}z^{a}(\mathbf{v}_{i}\cdot\nabla U_{\epsilon}(\mathbf{x}))^{2}d\mathbf{x}\\
\overset{\eqref{beta number key}}{\leq}&\frac{c(n, s, \tau, \Lambda_{0}, \Lambda_{1}, W)}{r^{n-1}}\int_{B_{r}(x_{0})}\left[\mathbf{\Theta}_{U_{\epsilon}}^{\epsilon}(8r, y)-\mathbf{\Theta}_{U_{\epsilon}}^{\epsilon}(r, y)\right]d\mu(y).
\end{aligned}
\end{equation*}
This completes the proof of Theorem \ref{thm Beta number estimate}.
\end{proof}

\section{Centered density drop gives packing}
\setcounter{equation}{0}
In this section, our main objective is to demonstrate that if we have a collection of disjoint balls with small energy drop at the center, then the Discrete Reifenberg Theorem gives good packing estimates.

First, we prove that small energy drops imply almost-homogeneous.
\begin{lem}\label{Quantutative homogeneous}
Take $\delta>0$. There exist constants $\gamma\in (0, 1/4)$ and $\mathbf{k}\geq 1$ depending only on $n, s, \tau, \Lambda_{0}, \Lambda_{1}, \delta,W$ satisfy the following condition.

For each $x\in B_{1}(0)$  and $r<20$, if $\mathbf{k}\epsilon\leq r$ and
\[\mathbf{\Theta}_{U_{\epsilon}}^{\epsilon}(r, x)-\mathbf{\Theta}_{U_{\epsilon}}^{\epsilon}(\gamma r, x)\leq \gamma,\]
then $U_{\epsilon}$ is $(0, \delta)-$symmetric in $\mathcal{B}^{+}_{r}(\mathbf{x}).$
\end{lem}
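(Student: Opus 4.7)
\medskip

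\noindent\textbf{Proof proposal for Lemma 6.1.} The plan is to argue by contradiction and compactness, in the same spirit as the proof of Lemma 5.2 given above. Suppose the statement fails for some fixed $\delta>0$. Then for every $j\in\mathbb{N}$, taking $\gamma_{j}=1/j$ and $\mathbf{k}_{j}=j$, we can find $\epsilon_{j}\in(0,1)$, a weak solution $U_{\epsilon_{j}}$ of \eqref{Extension equation} satisfying \eqref{uniform bound assumption'}--\eqref{uniform energy assumption'}, a point $x_{j}\in B_{1}(0)$, and a radius $r_{j}<20$ with $j\epsilon_{j}\le r_{j}$, such that
\[
\mathbf{\Theta}_{U_{\epsilon_{j}}}^{\epsilon_{j}}(r_{j},x_{j})-\mathbf{\Theta}_{U_{\epsilon_{j}}}^{\epsilon_{j}}(r_{j}/j,x_{j})\le 1/j,
\]
while $U_{\epsilon_{j}}$ is not $(0,\delta)$-symmetric in $\mathcal{B}^{+}_{r_{j}}(\mathbf{x}_{j})$.

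\smallskip

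Next I would rescale. Put $\tilde\epsilon_{j}:=\epsilon_{j}/r_{j}\le 1/j\to 0$ and $\widetilde{U}_{\tilde\epsilon_{j}}(\mathbf{x}):=U_{\epsilon_{j}}(\mathbf{x}_{j}+r_{j}\mathbf{x})$. By the scaling invariance of the weighted Dirichlet energy and the normalization of the potential term, $\widetilde{U}_{\tilde\epsilon_{j}}$ is again a weak solution of \eqref{Extension equation} (with parameter $\tilde\epsilon_{j}$) on $\mathcal{B}^{+}_{20}(\mathbf{0})$, and
\[
\mathbf{\Theta}_{\widetilde{U}_{\tilde\epsilon_{j}}}^{\tilde\epsilon_{j}}(\rho,0)=\mathbf{\Theta}_{U_{\epsilon_{j}}}^{\epsilon_{j}}(\rho r_{j},x_{j})\quad\text{for every }\rho>0.
\]
Lemma \ref{lem apriori estimate} gives a uniform energy bound on $\mathcal{B}^{+}_{1}(\mathbf{0})$, so Theorem \ref{convergence} produces a subsequence (not relabeled) and a limit $U_{\ast}$ that solves \eqref{eqn1}--\eqref{eqn2}, with $\widetilde{U}_{\tilde\epsilon_{j}}\to U_{\ast}$ strongly in $H^{1}(\mathcal{B}^{+}_{1}(\mathbf{0})\cup B_{1}(0),z^{a}d\mathbf{x})$.

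\smallskip

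The heart of the argument is to upgrade the energy drop assumption to homogeneity of $U_{\ast}$. For any fixed $\rho\in(0,1)$ and any $j$ large enough so that $1/j<\rho$, the monotonicity formula in Lemma \ref{lem Allen-Cahn monotonicity formula} together with the drop assumption yields
\[
\mathbf{\Theta}_{\widetilde{U}_{\tilde\epsilon_{j}}}^{\tilde\epsilon_{j}}(1,0)-\mathbf{\Theta}_{\widetilde{U}_{\tilde\epsilon_{j}}}^{\tilde\epsilon_{j}}(\rho,0)\le\mathbf{\Theta}_{\widetilde{U}_{\tilde\epsilon_{j}}}^{\tilde\epsilon_{j}}(1,0)-\mathbf{\Theta}_{\widetilde{U}_{\tilde\epsilon_{j}}}^{\tilde\epsilon_{j}}(1/j,0)\le 1/j.
\]
By the continuity of the density along strongly convergent sequences (the remark following Theorem \ref{convergence}, together with statement (iv) which kills the potential term in the limit), passing $j\to\infty$ gives $\mathbf{\Theta}_{U_{\ast}}(1,0)=\mathbf{\Theta}_{U_{\ast}}(\rho,0)$ for every $\rho\in(0,1)$. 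Applying Lemma \ref{lem Monotonicityformula} in the equality form, the nonnegative integrand
\[
z^{a}\frac{|\mathbf{x}\cdot\nabla U_{\ast}|^{2}}{|\mathbf{x}|^{n+2-2s}}
\]
must vanish a.e.\ on $\mathcal{B}^{+}_{1}(\mathbf{0})$, so $U_{\ast}$ is homogeneous about $\mathbf{0}$, i.e.\ $0$-symmetric.

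\smallskip

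To close the contradiction, use the strong convergence $\widetilde{U}_{\tilde\epsilon_{j}}\to U_{\ast}$ in $L^{2}(\mathcal{B}^{+}_{1}(\mathbf{0}),z^{a}d\mathbf{x})$: for $j$ sufficiently large,
\[
\int_{\mathcal{B}^{+}_{1}(\mathbf{0})}z^{a}|\widetilde{U}_{\tilde\epsilon_{j}}-U_{\ast}|^{2}\,d\mathbf{x}<\delta,
\]
which shows that $\widetilde{U}_{\tilde\epsilon_{j}}$ is $(0,\delta)$-symmetric in $\mathcal{B}^{+}_{1}(\mathbf{0})$; equivalently $U_{\epsilon_{j}}$ is $(0,\delta)$-symmetric in $\mathcal{B}^{+}_{r_{j}}(\mathbf{x}_{j})$, contradicting our choice. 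The main technical point to be careful about is the limit passage in the centered density: we need that the drop $\mathbf{\Theta}^{\epsilon}_{U_\epsilon}(r,x)$ at the two scales $1$ and $\rho$ converges to $\mathbf{\Theta}_{U_{\ast}}(1,0)$ and $\mathbf{\Theta}_{U_{\ast}}(\rho,0)$ respectively, which uses that the potential contribution vanishes in the limit (Theorem \ref{convergence}(iv)) together with strong $H^{1}$ convergence on annuli away from the tip; everything else is bookkeeping.
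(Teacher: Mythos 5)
Your proposal is correct and follows essentially the same strategy as the paper: argue by contradiction, rescale to unit scale sending $\tilde\epsilon_j=\epsilon_j/r_j\to 0$, invoke the a priori bound (Lemma \ref{lem apriori estimate}) and compactness (Theorem \ref{convergence}) to extract a strong limit $U_\ast$, show $U_\ast$ is homogeneous by passing the drop assumption to the limit, and close via strong $L^2(z^a)$ convergence plus the scale-invariance of the $(0,\delta)$-symmetry quantity. The one place you deviate is in how you promote the vanishing drop to constancy of $\mathbf{\Theta}_{U_\ast}(\cdot,0)$: the paper invokes upper semi-continuity of the pointwise density (Lemma \ref{Uppersemicontinuity}) to bound $\mathbf{\Theta}_{U_\ast}(1,0)-\mathbf{\Theta}_{U_\ast}(0)$, whereas you compare against a fixed positive scale $\rho>1/j$ via monotonicity of $\mathbf{\Theta}^{\tilde\epsilon_j}$ and pass to the limit at the two fixed scales $1$ and $\rho$; this avoids any appeal to upper semi-continuity at the blow-up point and is, if anything, slightly cleaner. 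Both routes then invoke Lemma \ref{lem Monotonicityformula} to conclude homogeneity. No gaps.
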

\begin{proof}
Assume it is false for some  constant $\delta$, then there exist sequences $\{\epsilon_{j}\}_{j\in\mathbb{N}}\subset (0, 1), \{U_{\epsilon_{j}}\}$ satisfying \eqref{Extension equation}, points $\{x_{j}\}_{j\in\mathbb{N}}\subset B_{1}(0)$, radii $\{r_{j}\}_{j\in\mathbb{N}}\subset (0, 20)$ and $\gamma_{j}\to 0$ such that $\epsilon_{j}/r_{j}\leq 2^{-j}$,
\[\mathbf{\Theta}_{U_{\epsilon_{j}}}^{\epsilon_{j}}(r_{j}, x_{j})-\mathbf{\Theta}_{U_{\epsilon_{j}}}^{\epsilon_{j}}(\gamma_{j} r_{j}, x_{j})\leq \gamma_{j},\]
but
\[U_{\epsilon_{j}}~\text{is not}~(0, \delta)-\text{symmetric in}~\mathcal{B}^{+}_{r_{j}}(\mathbf{x}_{j}),\]
where $\mathbf{x}_{j}=(x_{j}, 0)$. For each $j$, we set $\tilde{\epsilon}_{j}=\epsilon_{j}/r_{j}$ and $\tilde{U}_{\tilde{\epsilon}_{j}}(\mathbf{x})=U_{\epsilon_{j}}(\mathbf{x}_{j}+r_{j}\mathbf{x})$. Now our assumptions lead to
\begin{equation}\label{contradiction assumption}
\mathbf{\Theta}_{\tilde{U}_{\tilde{\epsilon}_{j}}}^{\tilde{\epsilon}_{j}}(1, 0)-\mathbf{\Theta}_{\tilde{U}_{\tilde{\epsilon}_{j}}}^{\tilde{\epsilon}_{j}}(\gamma_{j}, 0)\leq \gamma_{j},
\end{equation}
but
\[\tilde{U}_{\tilde{\epsilon}_{j}}~\text{is not}~(0, \delta)-\text{symmetric in}~\mathcal{B}^{+}_{1}(\mathbf{0}).\]
By Lemma \ref{lem apriori estimate} and Theorem \ref{convergence}, we can assume $\tilde{U}_{\tilde{\epsilon}_{j}}\to U_{\ast}$ strongly, where $U_{\ast}$ is a solution of \eqref{eqn1} satisfying \eqref{eqn2}. Combing Theorem  \ref{convergence}, Lemma \ref{Uppersemicontinuity} and \eqref{contradiction assumption}, we have
\[\mathbf{\Theta}_{U_{\ast}}(1,0)-\mathbf{\Theta}_{U_{\ast}}(0)\leq \limsup\limits_{j\to\infty}\Big[\mathbf{\Theta}_{\tilde{U}_{\tilde{\epsilon}_{j}}}^{\tilde{\epsilon}_{j}}(1, 0)-\mathbf{\Theta}_{\tilde{U}_{\tilde{\epsilon}_{j}}}^{\tilde{\epsilon}_{j}}(\gamma_{j}, 0)\Big]=0.\]
By Lemma \ref{lem Monotonicityformula}, $\mathbf{\Theta}_{U_{\ast}}(r, 0)$ is nondecreasing with respect to $r$. It follows that $\mathbf{\Theta}_{U_{\ast}}(\cdot, 0)$ is constant in $(0, 1)$ and $U_{\ast}$ is homogeneous with respect to $\mathbf{0}$ in $\mathcal{B}_{1}^{+}(\mathbf{0})$.
Using the strong convergence  once again, we obtain
\[\int_{\mathcal{B}^{+}_{1}(\mathbf{0})}z^{a}(\tilde{U}_{\tilde{\epsilon}_{j}}-U_{\ast})^{2}d\mathbf{x}\to 0\]
as $j\to\infty$. This implies $\tilde{U}_{\tilde{\epsilon}_{j}}$ is $(0, \delta)$-symmetric in $\mathcal{B}^{+}_{1}(\mathbf{0})$  for $j$ large enough, which contradicts our assumption.
\end{proof}
Next, we prove two properties on the packing measures.
\begin{lem}\label{lempac}
Let $\{B_{r_{p}}(p)\}_{p}$ be a collection of disjoint balls, with $p\in B_{1}(0)$
 and
 \[0\leq r_{p}\leq 1,\quad\forall p.\]
 For each  $i\in\mathbb{N}$ and $k\in\{1, 2, \cdots, n-1\}$, define the packing measure
\begin{equation}\label{defmu}
\mu_{i}=\sum\limits_{r_{p}\leq 2^{-i}}r_{p}^{k}\delta_{p}.
\end{equation}
Suppose $x\in\rm{spt}\mu_{i}$ and $j\geq i$, then
\begin{equation}\label{lem6.1.6}
\beta_{\mu_{i}, 2}^{k}(x, 2^{-j})=\left\{\begin{array}{lll}
\beta_{\mu_{j}, 2}^{k}(x, 2^{-j}),\quad&\text{if}~x\in\rm{spt}\mu_{j},\\
0,&\text{otherwise}.
\end{array}
\right.
\end{equation}
\end{lem}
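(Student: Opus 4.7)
The plan is to exploit two features of the configuration: the disjointness hypothesis on $\{B_{r_p}(p)\}_p$, and the fact that $\mu_j$ is obtained from $\mu_i$ by discarding exactly the atoms with $2^{-j} < r_p \le 2^{-i}$. The key geometric observation is that if $p$ and $p'$ are two distinct centers, the disjointness of $B_{r_p}(p)$ and $B_{r_{p'}}(p')$ forces $|p - p'| \ge \max(r_p, r_{p'})$. Applied at scale $2^{-j}$, this means that once we sit at a center $x = p_0 \in \mathrm{spt}\,\mu_i$ and look inside $B_{2^{-j}}(x)$, every other center $p$ we see must satisfy $r_p \le |p-p_0| < 2^{-j}$ and also $r_{p_0} \le |p-p_0| < 2^{-j}$. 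The conclusion of the lemma is essentially an immediate consequence of this observation.

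I would begin by writing $x = p_0$ and splitting into two cases according to whether $x \in \mathrm{spt}\,\mu_j$, i.e., whether $r_{p_0} \le 2^{-j}$ or $r_{p_0} > 2^{-j}$.

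In the first case ($r_{p_0} \le 2^{-j}$), I would verify that any $p \in \mathrm{spt}\,\mu_i \cap B_{2^{-j}}(x)$ automatically satisfies $r_p \le 2^{-j}$. Indeed, if $p = p_0$ this is our standing assumption; if $p \neq p_0$, the disjointness observation gives $r_p \le |p - p_0| < 2^{-j}$. Hence such a $p$ also lies in $\mathrm{spt}\,\mu_j$ with the identical weight $r_p^k$, so $\mu_i \llcorner B_{2^{-j}}(x) = \mu_j \llcorner B_{2^{-j}}(x)$. Since $\beta_{\mu, 2}^k(x, 2^{-j})$ depends only on the restriction of $\mu$ to $B_{2^{-j}}(x)$ (see \eqref{NOT1}), the two beta numbers agree, which is exactly the first line of \eqref{lem6.1.6}.

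In the second case ($r_{p_0} > 2^{-j}$), the same observation applied with $p' = p_0$ shows that no center $p \neq p_0$ can lie in $B_{2^{-j}}(p_0)$, since that would force $|p - p_0| \ge r_{p_0} > 2^{-j}$, contradicting $p \in B_{2^{-j}}(x)$. Therefore $\mu_i \llcorner B_{2^{-j}}(x) = r_{p_0}^k \delta_{p_0}$, and picking any affine $k$-plane $L^k$ through the single atom $p_0$ (which exists because $k \ge 1$) makes the integral in \eqref{NOT1} vanish. Thus $\beta_{\mu_i, 2}^k(x, 2^{-j}) = 0$, yielding the second line of \eqref{lem6.1.6}. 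There is no substantive obstacle in this argument; it is a direct bookkeeping consequence of disjointness, and the only subtlety is to keep track of the atom at $x$ itself when $x$ is or is not included in $\mu_j$.
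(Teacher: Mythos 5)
Your argument is correct and follows essentially the same route as the paper's proof: in both cases the disjointness of the balls is used to show either that $\mu_i\llcorner B_{2^{-j}}(x)=\mu_j\llcorner B_{2^{-j}}(x)$ (when $x\in\operatorname{spt}\mu_j$) or that $\mu_i\llcorner B_{2^{-j}}(x)$ is a single Dirac mass at $x$ (when $x\notin\operatorname{spt}\mu_j$), and then one concludes from the definition of the $\beta$-number. The only cosmetic difference is that you phrase the disjointness consequence as $|p-p'|\geq\max(r_p,r_{p'})$, whereas the paper uses the slightly sharper $|p-p'|\geq r_p+r_{p'}$ (equivalently $r_y\leq r_j-r_x$); both suffice for the required inclusions.
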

\begin{proof}
For convenience, we denote $r_{i}=2^{-i}$. Since $x\in{\rm spt}\mu_{i}$, $r_{x}\leq r_{i}$. First, we assume that $x\not\in{\rm spt}\mu_{j}$, so $r_{x}>r_{j}$. Since the balls $B_{r_{p}}(p)$ are disjoint,
\[{\rm spt}\mu_{i}\cap B_{r_{j}}(x)=x.\]
By choosing a $k-$affine plane passing through $x$, we know that $\beta_{\mu_{i}, 2}^{k}(x, r_{j})=0$.

Next, we assume $x\in{\rm spt}\mu_{j}$. By the definition, we have $r_{x}\leq r_{j}\leq r_{i}.$ If $y\in B_{r_{j}}(x)\backslash B_{r_{x}}(x)$ and $y\in {\rm spt}\mu_{i}$, then $r_{y}\leq r_{i}$. Since the balls $B_{r_{p}}(p)$ are disjoint, we have
 \[r_{y}\leq r_{j}-r_{x}<r_{j}.\]
 This implies $y\in{\rm spt}\mu_{j}$. In particular, we have proved that
\begin{equation}\label{lempac1}
({\rm spt}\mu_{i}\cap B_{r_{j}}(x))\subset {\rm spt}\mu_{j}.
\end{equation}
On the other hand, it is clear that
\begin{equation}\label{lempac2}
({\rm spt}\mu_{j}\cap B_{r_{j}}(x))\subset{\rm spt}\mu_{i}.
\end{equation}
By \eqref{lempac1} and \eqref{lempac2}, \eqref{defmu} and \eqref{NOT1}, we conclude that
\[\mu_{i}\llcorner B_{x}(r_{j})=\mu_{j} \llcorner B_{x}(r_{j}).\]
By the definition, it follows easily that
\[\beta_{\mu_{i}, 2}^{k}(x, r_{j})=\beta_{\mu_{j}, 2}^{k}(x, r_{j}). \qedhere\]
\end{proof}
\begin{lem}\label{lempacc}
Let $\{B_{2r_{p}}(p)\}_{p}$ be a collection of disjoint balls, with $p\in B_{1}(0)$
 and
 \[0\leq r_{p}\leq 1,\quad\forall p.\]
 For each integer $i\in\mathbb{N}$ and $k\in\{1, 2, \cdots, n-1\}$, let $\mu_{i}$ be defined by \eqref{defmu}. Then for any $x\in B_{1}(0)$,
\begin{equation}\label{lempacc1}
\sum\limits_{r_{j}\leq 2r_{i}}\int_{B_{r_{i}}(x)}\beta_{\mu_{i}, 2}^{k}(y, r_{j})^{2}d\mu_{i}(y)
=\sum\limits_{r_{j}\leq 2r_{i}}\int_{B_{r_{i}}(x)}\beta_{\mu_{j}, 2}^{k}(y, r_{j})^{2}d\mu_{j}(y).
\end{equation}
\end{lem}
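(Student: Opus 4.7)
The plan is to reduce the identity to a scale-by-scale application of Lemma \ref{lempac}. Since each $\mu_i$ is a purely atomic measure, both sides of the claimed equality are finite weighted sums, so it suffices to verify that for every fixed $r_j$ appearing in the outer summation,
\begin{equation*}
\int_{B_{r_i}(x)} \beta_{\mu_i, 2}^k(y, r_j)^2 \, d\mu_i(y) = \int_{B_{r_i}(x)} \beta_{\mu_j, 2}^k(y, r_j)^2 \, d\mu_j(y).
\end{equation*}
Summing this pointwise-in-$j$ identity over the admissible range of $r_j$ then yields the lemma.

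To prove the scale-fixed identity, I would partition the atoms of $\mu_i$ lying in $B_{r_i}(x)$ into two disjoint pieces according to their radius: first, those $p$ with $r_p \leq r_j$, which coincide with the atoms of $\mu_j$ in $B_{r_i}(x)$ and which carry the same weight $r_p^k$ under both measures; second, those with $r_j < r_p \leq r_i$, which lie in $\text{spt}\,\mu_i \setminus \text{spt}\,\mu_j$. Applying Lemma \ref{lempac} to each atom of the first type gives $\beta_{\mu_i, 2}^k(p, r_j) = \beta_{\mu_j, 2}^k(p, r_j)$, while for each atom of the second type the same lemma gives $\beta_{\mu_i, 2}^k(p, r_j) = 0$, so those atoms contribute nothing. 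The $\mu_i$-integral therefore collapses onto the first piece, which is exactly the $\mu_j$-integral, because matching weights $r_p^k$ appear on both sides.

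I do not anticipate any real obstacle: everything is bookkeeping once Lemma \ref{lempac} is in hand. The only mild point to check is that the hypotheses of Lemma \ref{lempac} are satisfied here; this is automatic because the present lemma assumes disjointness of the dilated balls $B_{2 r_p}(p)$, which is strictly stronger than the disjointness of $B_{r_p}(p)$ needed in Lemma \ref{lempac}. Hence the pointwise comparison of $\beta$-numbers at every scale $r_j$ in the outer sum transfers without change, and summation gives the claimed identity.
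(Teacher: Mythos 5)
Your proof has a genuine gap at the borderline scale $r_j = 2r_i$, i.e.\ $j = i-1$, which is included in the outer sum. Lemma~\ref{lempac} is stated only for $j \geq i$, so it simply does not apply when $j = i-1$; you cannot invoke it there to conclude $\beta_{\mu_i,2}^k(p,r_j)=\beta_{\mu_j,2}^k(p,r_j)$. Your partition also breaks down at this scale: for $j=i-1$ one has $r_j = 2r_i > r_i$, so the $\mu_i$-atoms in $B_{r_i}(x)$ with $r_p \le r_j$ are precisely all of $\operatorname{spt}\mu_i \cap B_{r_i}(x)$ (those with $r_p \le r_i$), whereas the atoms of $\mu_{i-1}$ in $B_{r_i}(x)$ also include points with $r_i < r_p \le 2r_i$. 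These sets do \emph{not} coincide, contrary to what you assert, so the claimed collapse of the $\mu_i$-integral onto the $\mu_j$-integral does not follow from the bookkeeping you describe. Your closing remark---that the disjointness of $\{B_{2r_p}(p)\}$ is merely ``stronger than needed'' and automatically verifies the hypotheses of Lemma~\ref{lempac}---misdiagnoses the situation: the stronger disjointness is not a redundant luxury but the essential new ingredient required at scale $j=i-1$, which Lemma~\ref{lempac} leaves uncovered.

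The paper handles this by splitting the outer sum into the range $r_j \le r_i$ (where Lemma~\ref{lempac} applies directly) and the single top term $r_i < r_j \le 2r_i$. For the top term it performs a fresh case analysis on the atoms: if $r_p \le r_j < 2r_p$, disjointness of $\{B_{2r_p}(p)\}$ forces $\operatorname{spt}\mu_i \cap B_{r_j}(p) = \operatorname{spt}\mu_j \cap B_{r_j}(p) = \{p\}$, so both $\beta$-numbers vanish; if $2r_p \le r_j \le 2r_i$, then for any $p' \in B_{r_j}(p)\cap \operatorname{spt}\mu_j$ the same disjointness gives $2r_p + 2r_{p'} < r_j \le 2r_i$, hence $r_{p'} < r_i$ and $p' \in \operatorname{spt}\mu_i$, so $\mu_i\llcorner B_{r_j}(p) = \mu_j\llcorner B_{r_j}(p)$ and the $\beta$-numbers agree. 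Your term-by-term identity is in fact true, but it requires exactly this extra argument for $j=i-1$; without it the proof is incomplete.
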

\begin{proof}
By the definition of $\mu_{i}$ and Lemma \ref{lempac}, we have
\[
\begin{aligned}
&\sum\limits_{r_{j}\leq 2r_{i}}\int_{B_{r_{i}}(x)}\beta_{\mu_{i}, 2}^{k}(y, r_{j})^{2}d\mu_{i}(y)\\
=&\sum\limits_{r_{j}\leq 2r_{i}}\sum\limits_{p\in B_{r_{i}}(x), r_{p}\leq r_{i}}\beta_{\mu_{i}, 2}^{k}(p, r_{j})^{2}r_{p}^{k}\\
=&\sum\limits_{r_{j}\leq r_{i}}\sum\limits_{p\in B_{r_{i}}(x), r_{p}\leq r_{i}}\beta_{\mu_{i}, 2}^{k}(p, r_{j})^{2}r_{p}^{k}+\sum\limits_{r_{i}<r_{j}\leq 2r_{i}}\sum\limits_{p\in B_{r_{i}}(x), r_{p}\leq r_{i}}\beta_{\mu_{i}, 2}^{k}(p, r_{j})^{2}r_{p}^{k}\\
=&\sum\limits_{r_{j}\leq r_{i}}\sum\limits_{p\in B_{r_{i}}(x), r_{p}\leq r_{j}}\beta_{\mu_{j}, 2}^{k}(p, r_{j})^{2}r_{p}^{k}+\sum\limits_{r_{i}<r_{j}\leq 2r_{i}}\sum\limits_{p\in B_{r_{i}}(x), r_{p}\leq r_{i}}\beta_{\mu_{i}, 2}^{k}(p, r_{j})^{2}r_{p}^{k}.
\end{aligned}\]
Notice that
\[\begin{aligned}
&\sum\limits_{r_{i}<r_{j}\leq 2r_{i}}\sum\limits_{p\in B_{r_{i}}(x), r_{p}\leq r_{j}}\beta_{\mu_{j}, 2}^{k}(p, r_{j})^{2}r_{p}^{k}\\
=&\sum\limits_{r_{i}<r_{j}\leq 2r_{i}}\sum\limits_{p\in B_{r_{i}}(x), 2r_{p}\leq r_{j}}\beta_{\mu_{j}, 2}^{k}(p, r_{j})^{2}r_{p}^{k}+\sum\limits_{r_{i}<r_{j}\leq 2r_{i}}\sum\limits_{p\in B_{r_{i}}(x), r_{p}\leq r_{j}<2r_{p}}\beta_{\mu_{j}, 2}^{k}(p, r_{j})^{2}r_{p}^{k}.
\end{aligned}\]

\textbf{Case 1}: $p\in B_{r_{i}}(x)$ and $r_{p}\leq r_{j}<2r_{p}$.

Since the balls $\{B_{2r_{p}}(p)\}$ are disjoint,
\[\rm {spt} \mu_{i}\cap B_{r_{j}}(p)=\rm {spt} \mu_{j}\cap B_{r_{j}}(p)=\{p\}.\]
By choosing a $k-$affine plane passing through $p$, we get \[\beta_{\mu_{i}, 2}^{k}(p, r_{j})=\beta_{\mu_{j}, 2}^{k}(p, r_{j})=0.\]

\textbf{Case 2}: $p\in B_{r_{i}}(x)$ and $2r_{p}\leq r_{j}\leq2r_{i}$.

If there exists a point $p'\in B_{r_{j}}(p)\cap \rm {spt}\mu_{j}$, then
\[2r_{p}+2r_{p'}<r_{j}\leq 2r_{i}.\]
It follows that $r_{p}+r_{p'}<r_{i}$ and $p'\in spt \mu_{i}$. Therefore, we have shown that
\[\mu_{i}\llcorner B_{r_{j}}(p)=\mu_{j}\llcorner B_{r_{j}}(p).\]
Hence
\[\begin{aligned}
\sum\limits_{r_{i}<r_{j}\leq 2r_{i}}\sum\limits_{p\in B_{r_{i}}(x), r_{p}\leq r_{j}}\beta_{\mu_{j}, 2}^{k}(p, r_{j})^{2}r_{p}^{k}
=&\sum\limits_{r_{i}<r_{j}\leq 2r_{i}}\sum\limits_{p\in B_{r_{i}}(x), 2r_{p}\leq r_{j}}\beta_{\mu_{j}, 2}^{k}(p, r_{j})^{2}r_{p}^{k}\\
=&\sum\limits_{r_{i}<r_{j}\leq 2r_{i}}\sum\limits_{p\in B_{r_{i}}(x), r_{p}\leq r_{i}}\beta_{\mu_{i}, 2}^{k}(p, r_{j})^{2}r_{p}^{k}.
\end{aligned}\]
Combining Case 1 and Case 2, we can get
\begin{equation}\label{claim}
\sum\limits_{r_{i}<r_{j}\leq 2r_{i}}\sum\limits_{p\in B_{r_{i}}(x), r_{p}\leq r_{i}}\beta_{\mu_{i}, 2}^{k}(p, r_{j})^{2}r_{p}^{k}=\sum\limits_{r_{i}<r_{j}\leq 2r_{i}}\sum\limits_{p\in B_{r_{i}}(x), r_{p}\leq r_{j}}\beta_{\mu_{j}, 2}^{k}(p, r_{j})^{2}r_{p}^{k}.
\end{equation}
Then \eqref{lempacc1} follows from \eqref{claim}.
\end{proof}
\begin{lem}\label{key packing}
There exist  $\eta\in (0, 1)$ and $\mathbf{k}\geq 1$ depending only on $n, s, \tau,\Lambda_{0}, \Lambda_{1}, W$ such that the following holds.

Assume $r\in (0, 1), \mathbf{k}\epsilon<r$ and
 \[\sup\limits_{x\in B_{1}(0)}\mathbf{\Theta}_{U_{\epsilon}}^{\epsilon}(2, x)\leq M.\]
 If $\{B_{2r_{p}}(p)\}_{p}$ is a collection of disjoint balls satisfying
 \begin{equation}\label{key packing1}
 \mathbf{\Theta}_{U_{\epsilon}}^{\epsilon}(\eta r_{p},p)\geq M-\eta,\quad p\in \big\{|U_{\epsilon}(x, 0)|\leq 1-\tau\big\}\cap B_{1}(0),\quad r\leq r_{p}\leq 1,
 \end{equation}
 then
 \begin{equation}\label{key packing2}
 \sum\limits_{p}r_{p}^{n-1}\leq c(n).
 \end{equation}
\end{lem}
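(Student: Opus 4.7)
The plan is to apply the Discrete Reifenberg Theorem (Theorem \ref{theRec1}, and more conveniently its discrete variant Corollary \ref{correc1}) to the packing measure
\begin{equation*}
\mu := \sum_p r_p^{n-1} \delta_p.
\end{equation*}
If the $\beta$-integral hypothesis of Reifenberg holds for $\mu$, then $\sum_p r_p^{n-1} = \mu(B_1(0)) \leq C_{DR}(n)$, which is exactly \eqref{key packing2}. Everything reduces to verifying this hypothesis, with the $L^2$-best-approximation estimate of Theorem \ref{thm Beta number estimate} serving as the bridge from the density of $U_\epsilon$ to the geometry of $\mu$.

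\textbf{Almost-homogeneity and $\beta$-control.} Fix $\delta = \delta(n,s,\tau,\Lambda_0,\Lambda_1,W)$ small enough to invoke Theorem \ref{thm Beta number estimate}, and for this $\delta$ extract $\gamma = \gamma(\delta)$ and $\mathbf{k}_1 \geq 1$ from Lemma \ref{Quantutative homogeneous}. Choose $\eta \leq \gamma$ and take $\mathbf{k}$ to dominate both $\mathbf{k}_1$ and the threshold in Theorem \ref{thm Beta number estimate}. The monotonicity formula (Lemma \ref{lem Allen-Cahn monotonicity formula}), together with $\mathbf{\Theta}^\epsilon_{U_\epsilon}(\eta r_p, p) \geq M - \eta$ and the global bound $\mathbf{\Theta}^\epsilon_{U_\epsilon}(2,\cdot) \leq M$, pins $s \mapsto \mathbf{\Theta}^\epsilon_{U_\epsilon}(s, p)$ inside $[M-\eta, M]$ for every $s \in [\eta r_p, 2]$, so every density drop across this window is at most $\eta$. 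Lemma \ref{Quantutative homogeneous} then forces $U_\epsilon$ to be $(0,\delta)$-symmetric in $\mathcal{B}^+_{8s}(\mathbf{p})$ for each $s \in [r_p, 1/8]$. Since every $p \in \mathrm{spt}\mu$ satisfies $|U_\epsilon(p,0)| \leq 1-\tau$, Theorem \ref{thm Beta number estimate} applies and yields
\begin{equation*}
\beta^{n-1}_{\mu,2}(p, s)^2 \leq \frac{c}{s^{n-1}} \int_{B_s(p)} \bigl[\mathbf{\Theta}^\epsilon_{U_\epsilon}(8s, z) - \mathbf{\Theta}^\epsilon_{U_\epsilon}(s, z)\bigr] d\mu(z)
\end{equation*}
for $s \in [r_p, 1/8]$, while for $s < r_p$ disjointness of $\{B_{2r_q}(q)\}$ forces $\mathrm{spt}\mu \cap B_s(p) = \{p\}$, hence $\beta^{n-1}_{\mu,2}(p, s) = 0$ by placing an affine hyperplane through $p$.

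\textbf{Verifying Reifenberg: the main obstacle.} Inserting the above $\beta$-bound into the Reifenberg integrand, swapping the order of integration by Fubini, and using that for each fixed $z$ the sum of dyadic drops $\sum_j[\mathbf{\Theta}^\epsilon_{U_\epsilon}(8\cdot 2^{-j}, z) - \mathbf{\Theta}^\epsilon_{U_\epsilon}(2^{-j}, z)]$ across scales in $[\eta r_z, 2]$ telescopes to at most $3[\mathbf{\Theta}^\epsilon_{U_\epsilon}(2, z) - \mathbf{\Theta}^\epsilon_{U_\epsilon}(\eta r_z, z)] \leq 3\eta$, one arrives at
\begin{equation*}
\int_0^{2R} \int_{B_R(x)} \frac{1}{s} \beta^{n-1}_{\mu,2}(y, s)^2 d\mu(y) ds \leq C(n)\,\eta\,\mu(B_{3R}(x))
\end{equation*}
for every $x \in B_1(0)$ and $R \in (0, 1]$. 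The technical heart, and the main obstacle, is that the right-hand side involves precisely the packing mass we are trying to bound, a circular dependence. This is resolved by the classical Naber--Valtorta bootstrap on scales: one applies Corollary \ref{correc1} inductively to the truncated packing measures $\mu_i := \sum_{r_p \leq 2^{-i}} r_p^{n-1} \delta_p$, starting from the smallest scale $r$ (at which the hypothesis $r_p \geq r$ and disjointness give $\mu(B_R(x)) \leq C(n) R^{n-1}$ for $R \gtrsim r$ essentially by volume counting) and propagating the bound upward. Lemmas \ref{lempac} and \ref{lempacc} are tailored to this step: they let us re-express $\beta$-sums for $\mu$ in terms of $\beta$-sums for the appropriate $\mu_i$, so the inductive hypothesis feeds cleanly into the next scale. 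Choosing $\eta$ so small that $C(n)\eta$ beats $\delta_{DR}(n)$ uniformly at every inductive step, the conclusion $\mu(B_1(0)) \leq c(n)$ drops out.
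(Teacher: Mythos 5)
Your proposal is correct and follows essentially the same route as the paper's proof: both pin $\mathbf{\Theta}^{\epsilon}_{U_\epsilon}(s,p)\in[M-\eta,M]$ by monotonicity, derive $(0,\delta)$-symmetry from Lemma \ref{Quantutative homogeneous}, feed this into Theorem \ref{thm Beta number estimate} for the $\beta$-number bound, and resolve the circular dependence on $\mu(B_R(x))$ by the upward induction on the truncated packing measures $\mu_i$ (via Lemmas \ref{lempac}--\ref{lempacc}) combined with Corollary \ref{correc1}. The only cosmetic difference is that the paper makes explicit a terminal step covering $B_1(0)$ by finitely many $2^{-4}$-balls to pass from the inductive estimate at small scales to the full sum $\mu_0(B_1(0))$.
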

\begin{proof}
\textbf{Step 1.}  The choice of  $\eta$ and $\mathbf{k}$.

      We choose  $\delta(n, s, \tau, \Lambda_{0}, \Lambda_{1},W)$ and $\mathbf{k}_{1}(n, s, \tau, \Lambda_{0}, \Lambda_{1}, W)$ satisfying Theorem \ref{key packing}. According to Lemma \ref{Quantutative homogeneous}, there exist   $\gamma(n, s, \tau, \Lambda_{0}, \Lambda_{1},W)$ and $\mathbf{k}_{2}(n, s, \tau, \Lambda_{0}, \Lambda_{1},W)$ such that if
\[\mathbf{\Theta}_{U_{\epsilon}}^{\epsilon}(8r_{p}, p)-\mathbf{\Theta}_{U_{\epsilon}}^{\epsilon}(\gamma r_{p}, p)<\gamma,\]
then $U_{\epsilon}$ is $(0, \delta)-$ symmetric in $\mathcal{B}^{+}_{8r_{p}}(\mathbf{p})$. Choose
\[\eta(n, s, \tau, \Lambda_{0}, \Lambda_{1},W)=\min\{\delta (n, s, \tau, \Lambda_{0}, \Lambda_{1},W), \gamma (n, s, \tau, \Lambda_{0}, \Lambda_{1},W)\}\]
and
\[\mathbf{k}(n, s, \tau, \Lambda_{0}, \Lambda_{1},W)=\max\{\mathbf{k}_{1}(n, s, \tau, \Lambda_{0}, \Lambda_{1},W), \mathbf{k}_{2}(n, s, \tau, \Lambda_{0}, \Lambda_{1},W)\}.\]

For each integer $i\in\mathbb{N}$, we define the packing measure
\begin{equation}\label{lem6.1.4}
\mu_{i}=\sum_{r_{p}\leq r_{i}}r_{p}^{n-1}\delta_{p}.
\end{equation}
Clearly, the required estimate \eqref{key packing2} is equivalent to
\begin{equation}\label{lem6.1.5}
\mu_{0}(B_{1}(0))\leq c(n).
\end{equation}
Notice that if $r_{p}\leq 2^{-2}$,
\[\mathbf{\Theta}_{U_{\epsilon}}^{\epsilon}(8r_{p}, p)-\mathbf{\Theta}_{U_{\epsilon}}^{\epsilon}(\eta r_{p},p)\leq M-\mathbf{\Theta}_{U_{\epsilon}}^{\epsilon}(\eta r_{p},p)\leq\eta,\]
By Lemma \ref{Quantutative homogeneous}, Theorem \ref{key packing} and the choice of $\eta$, we have
\begin{equation}\label{lem6.1.7}
\beta_{\mu_{i}, 2}^{n-1}(p, r_{p})^{2}\leq\frac{c(n, s, \tau,\Lambda_{0}, \Lambda_{1},W)}{r_{p}^{n-1}} \int_{B_{r_{p}}(p)}\big[\mathbf{\Theta}_{U_{\epsilon}}^{\epsilon}(8r_{p}, y)-\mathbf{\Theta}_{U_{\epsilon}}^{\epsilon}(r_{p}, y)\big]d\mu_{i}(y)
\end{equation}
whenever $r_{p}\leq 2^{-2}$.

\textbf{Step 2.} A packing estimate based on induction.

 For $r_{i}\leq 2^{-4}$, we shall inductively prove the estimate
\begin{equation}\label{lem6.1.8}
\mu_{i}(B_{r_{i}}(x))\leq C_{DR}(n)r_{i}^{n-1},\quad\forall x\in B_{1}(0).
\end{equation}
Here $C_{DR}(n)$ is the constant from Theorem \ref{theRec1}.

 If $i$ is large enough, then $r_{i}<r$. By \eqref{key packing1} and \eqref{lem6.1.4}, we know that $\mu_{i}=0$. Therefore, \eqref{lem6.1.8} holds trivially provided that $i$ is sufficiently large.

 Let us suppose the inductive hypothesis that \eqref{lem6.1.8} holds for all $j\geq i+1$. Fix a point $x\in B_{1}(0)$, we have
\begin{equation}
\mu_{j}(B_{4r_{j}}(x))=\mu_{j+2}(B_{4r_{j}}(x))+\sum r_{p}^{n-1},
\end{equation}
where the sum is over $p\in B_{4r_{j}}(x)$ with $r_{j+2}<r_{p}\leq r_{j}$. Recall that the balls  $B_{2r_{p}}(p)$ are disjoint, so we are summing over at most $c(n)$ points. Therefore,
\begin{equation}\label{lem6.1.9}
\mu_{j}(B_{4r_{j}}(x))\leq\Gamma(n)r_{j}^{n-1},\quad\forall j\geq i-2,\quad\forall x\in B_{1}(0),
\end{equation}
where $\Gamma=c(n)C_{DR}$.

For every $x\in B_{1}(0)$, we calculate,
\begin{align}
&\sum\limits_{r_{j}\leq 2r_{i}}\int_{B_{r_{i}}(x)}\beta_{\mu_{i}, 2}^{n-1}(z, r_{j})^{2}d\mu_{i}(z)\notag\\
\overset{\eqref{lempacc1}}{=}&\sum\limits_{r_{j}\leq 2r_{i}}\int_{B_{r_{i}}(x)}\beta_{\mu_{j}, 2}^{n-1}(z, r_{j})^{2}d\mu_{j}(z)\notag\\
\overset{\eqref{lem6.1.7}}{\leq}& c\sum\limits_{r_{j}\leq 2r_{i}}\frac{1}{r_{j}^{n-1}}\int_{B_{r_{i}}(x)}\int_{B_{r_{j}}(z)}\left[\mathbf{\Theta}_{U_{\epsilon}}^{\epsilon}(8r_{j}, y)-\mathbf{\Theta}_{U_{\epsilon}}^{\epsilon}(r_{j}, y)\right] d\mu_{j}(y)d\mu_{j}(z)\notag\\
\overset{\textrm{Fubini}}{\leq} &c\sum\limits_{r_{j}\leq 2r_{i}}\int_{B_{r_{i}+r_{j}}(x)}\frac{\mu_{j}(B_{r_{j}}(y))}{r_{j}^{n-1}}\left[\mathbf{\Theta}_{U_{\epsilon}}^{\epsilon}(8r_{j}, y)-\mathbf{\Theta}_{U_{\epsilon}}^{\epsilon}(r_{j}, y)\right]d\mu_{j}(y)\notag\\
\overset{\eqref{lem6.1.9}}{\leq}&c\Gamma\sum\limits_{r_{j}\leq 2r_{i}}\int_{B_{r_{i}+r_{j}}(x)}\left[\mathbf{\Theta}_{U_{\epsilon}}^{\epsilon}(8r_{j}, y)-\mathbf{\Theta}_{U_{\epsilon}}^{\epsilon}(r_{j}, y)\right]d\mu_{j}(y)\notag\\
\leq&c\Gamma\int_{B_{4r_{i}}(x)}\sum\limits_{r_{j}\leq 2r_{i}}\left[\mathbf{\Theta}_{U_{\epsilon}}^{\epsilon}(8r_{j}, y)-\mathbf{\Theta}_{U_{\epsilon}}^{\epsilon}(r_{j}, y)\right]d\mu_{j}(y)\notag\\
\leq & c\Gamma\Big\{\sum\limits_{p\in B_{4r_{i}}(x)\cap{\rm spt}\mu_{i}}r_{p}^{n-1}\left[\mathbf{\Theta}_{U_{\epsilon}}^{\epsilon}(16r_{i}, p)-\mathbf{\Theta}_{U_{\epsilon}}^{\epsilon}(r_{p}, p)\right]\Big\}\notag\\
\leq & c\Gamma\eta \mu_{i}(B_{4r_{i}}(x))\notag\\
\leq & c(n, s, \tau,\Lambda_{0}, \Lambda_{1},W)\Gamma^{2}\eta r_{i}^{n-1}.\notag
\end{align}
Ensuring $\eta=\eta (n, s, \tau,\Lambda_{0}, \Lambda_{1})$ sufficiently small, we deduce that
\[\sum\limits_{r_{j}\leq 2r_{i}}\int_{B_{r_{i}}(x)}\beta_{\mu_{i}, 2}^{n-1}(y, r_{j})^{2}d\mu_{i}(y)\leq 2^{-2-2n}\delta_{DR}(n)r_{i}^{n-1}.\]
Therefore, Corollary \ref{correc1} yields
\[\mu_{i}(B_{r_{i}}(x))\leq C_{DR}(n)r_{i}^{n-1}.\]
This proves \eqref{lem6.1.8}. By  induction, we know that \eqref{lem6.1.8} holds for all $r_{i}\leq 2^{-4}$.

\textbf{Step 3.} The proof of \eqref{key packing2}.

It is easy to see there exists a universal constant $N$ such that
$B_{1}(0)\subset \bigcup_{i=1}^{N} B_{r_{4}}(x_{i}),$
where $x_{i}, i=1, \cdots, N$  are some points in $B_{1}(0)$. Therefore,
\[
\begin{aligned}
\mu_{0}(B_{1}(0))&\leq \sum_{i=1}^{N}\mu_{0}(B_{r_{4}}(x_{i}))\\
&=\sum_{i=1}^{N}\Big[\mu_{4}(B_{r_{4}}(x_{i}))+\sum_{p\in B_{r_{4}}(x_{i}),r_{p}>r_{4}}r_{p}^{n-1}\Big]\\\
&\leq c(n),
\end{aligned}\]
here in the last inequality, we have applied Step 2 and the fact that the number of $p$ satisfying ${p\in B_{r_{4}}(x_{i}),r_{p}>r_{4}}$ can be controlled by a universal constant.
\end{proof}

\section{The key dichotomy}
\setcounter{equation}{0}
In this section, we prove a crucial dichotomy which is a restatement of effective dimension reduction in the vein of Federer and Almgren. Roughly speaking, this result says that either we have  small density drop on $\big\{|U_{\epsilon}(x, 0)|\leq 1-\tau\big\}$, or the high-density points are concentrated near an $(n-2)$-dimensional affine plane.
\begin{thm}\label{thm Allen-Cahn Dichotomy}
Given $\gamma>0, \eta'>0,\rho>0$,  there exist $\eta\in (0, 1/4)$ and $\mathbf{k}\geq 1$ depending only on $n, s, \tau, \gamma, \eta', \rho, \Lambda_{0}, \Lambda_{1},W$ such that the following holds.

Suppose $ r<1$ and $\mathbf{k}\epsilon\leq r$. For each $x_{0}\in B_{1}(0)$ such that $|U_{\epsilon}(x_{0}, 0)|\leq 1-\tau$, we set
\[M=\sup_{x\in B_{r}(x_{0})}\mathbf{\Theta}_{U_{\epsilon}}^{\epsilon}(2r, x).\]
Then at least one of the following two possibilities occurs:
\begin{itemize}
\item[(i)] we have
\[\mathbf{\Theta}_{U_{\epsilon}}^{\epsilon}(\gamma\rho r, x)\geq M-\eta',\quad\text{on}~\big\{|U_{\epsilon}(x, 0)|\leq 1-\tau\big\}\cap B_{r}(x_{0}),\]

\item[(ii)] there is an $n-2$ dimensional affine plane $L^{n-2}$ such that \[\{x:\mathbf{\Theta}_{U_{\epsilon}}^{\epsilon}(2\eta r, x)\geq M-\eta\}\cap B_{r}(x_{0})\subset\mathcal{J}_{\rho r}(L^{n-2}).\]
\end{itemize}
\end{thm}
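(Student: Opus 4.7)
The plan is to prove this Federer--Almgren style dichotomy by a blow-up argument combining the compactness of weak solutions (Theorem 2.2), the two monotonicity formulas (Lemma 2.4 and Lemma 3.3), and Federer's dimension reduction for nonlocal stationary cones (Lemma 3.6). If both alternatives fail along a sequence with $\tilde\epsilon_j=\epsilon_j/r_j\to 0$, then after rescaling the limiting cone $U_{\ast}$ will be homogeneous about $n$ affinely $\rho$-independent points, forcing it to be $(n-1)$-symmetric and hence the extension of $\mathrm{sgn}(x\cdot\nu)$; but the failure of~(i) will simultaneously produce a point on this hyperplane where the density drops by at least $\eta'$, contradicting translation invariance of $U_{\ast}$ along the interface.

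\textbf{Blow-up setup.} Negating the conclusion, I would extract sequences $\epsilon_j$, $r_j\in(0,1)$ with $j\epsilon_j\le r_j$, $x_j\in B_1(0)$ with $|U_{\epsilon_j}(x_j,0)|\le 1-\tau$, and $\eta_j\downarrow 0$ for which both~(i) and~(ii) fail. Setting $\tilde U_j(\mathbf{x}):=U_{\epsilon_j}(\mathbf{x}_j+r_j\mathbf{x})$ and $\tilde\epsilon_j:=\epsilon_j/r_j\to 0$, Lemma 5.1 together with monotonicity gives $M_j:=\sup_{y\in B_1(0)}\mathbf{\Theta}^{\tilde\epsilon_j}_{\tilde U_j}(2,y)\le\Lambda$. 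Pass to subsequences so that $M_j\to M_\infty$ and, by Theorem 2.2, $\tilde U_j\to U_{\ast}$ strongly in $H^1(\mathcal{B}_2^+(\mathbf 0)\cup B_2(0),z^a d\mathbf{x})$, with $U_{\ast}(\cdot,0)=\chi_{E_{\ast}}-\chi_{E_{\ast}^c}$ solving \eqref{eqn1}--\eqref{eqn2}.

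\textbf{Using the failure of (ii): an $(n-1)$-symmetric limit.} Since the set $A_j:=\{y\in B_1(0):\mathbf{\Theta}^{\tilde\epsilon_j}_{\tilde U_j}(2\eta_j,y)\ge M_j-\eta_j\}$ is not contained in any $\rho$-neighborhood of an $(n-2)$-plane, an inductive geometric selection produces points $p_0^j,\dots,p_{n-1}^j\in A_j$ with $\mathrm{dist}(p_k^j,\,p_0^j+\mathrm{span}\{p_i^j-p_0^j\}_{i<k})\ge\rho$, a property preserved by the limits $p_i^\infty$. For each fixed $r\in(0,2)$, monotonicity traps $\mathbf{\Theta}^{\tilde\epsilon_j}_{\tilde U_j}(r,p_i^j)\in[M_j-\eta_j,M_j]$ once $2\eta_j<r$, and the density continuity stated after Theorem 2.2 upgrades this to $\mathbf{\Theta}_{U_{\ast}}(r,p_i^\infty)\equiv M_\infty$ on $(0,2)$. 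The equality case of Lemma 3.3 then forces $U_{\ast}$ to be homogeneous about each $\mathbf{p}_i^\infty$. Shifting $p_0^\infty$ to the origin, $U_{\ast}$ becomes a nonlocal stationary cone with $p_i^\infty-p_0^\infty\in S(U_{\ast})$, and Lemma 3.6 yields $\dim S(U_{\ast})\ge n-1$, so $U_{\ast}$ is $(n-1)$-symmetric. Combined with Remark 3.4, the trace $U_{\ast}(\cdot,0)$ must be $\mathrm{sgn}(x\cdot\nu)$ and $\partial E_{\ast}$ is the hyperplane $\{x\cdot\nu=0\}$.

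\textbf{Using the failure of (i); main obstacle.} From the failure of~(i), extract $y_j\in B_1(0)$ with $|\tilde U_j(y_j,0)|\le 1-\tau$ and $\mathbf{\Theta}^{\tilde\epsilon_j}_{\tilde U_j}(\gamma\rho,y_j)<M_j-\eta'$, and let $y_j\to y_\infty$. The $C^0_{\mathrm{loc}}$-convergence in Theorem 2.2(iii) away from $\partial E_{\ast}$ forces $y_\infty\in\partial E_{\ast}$, i.e., onto the hyperplane just identified. Translation invariance of $U_{\ast}$ along $\partial E_{\ast}$ combined with $\mathbf{\Theta}_{U_{\ast}}(r,0)\equiv M_\infty$ on $(0,2)$ yields $\mathbf{\Theta}_{U_{\ast}}(\gamma\rho,y_\infty)=M_\infty$, while passing to the limit in the strict density inequality gives $\mathbf{\Theta}_{U_{\ast}}(\gamma\rho,y_\infty)\le M_\infty-\eta'$, the desired contradiction. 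The step I expect to require the most care is the coordinated passage to the limit of the three density sequences (the near-maximizers defining $M_j$, the high-density points $p_i^j$, and the transition point $y_j$): continuity at each positive radius follows from strong $H^1$ convergence, but reconciling this with Lemma 3.5 and the monotonicity formulas to consistently control the limits as $r\downarrow 0$, and verifying that $y_\infty$ indeed lies on $\partial E_{\ast}$ rather than in a constant region, are the delicate points.
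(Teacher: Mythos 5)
Your proposal is correct in substance, but it reorganizes the paper's proof into a single combined blow-up contradiction rather than following the paper's decomposition. The paper proves Theorem \ref{thm Allen-Cahn Dichotomy} by first establishing two quantitative sublemmas by compactness --- Lemma \ref{lem Allen-Cahn Dichotomy1} (density propagation: high density at $\rho$-independent points forces $\mathbf{\Theta}^{\epsilon}_{U_{\epsilon}}(\gamma\rho r,\cdot)\geq M-\eta'$ throughout the tube $\mathcal{J}_{\beta r}(L^{n-1})$) and Lemma \ref{lem Allen-Cahn Dichotomy2} (confinement: the transition set in $B_r(x_0)$ lies inside that tube) --- and then assembling the dichotomy via a deterministic iterative point selection (pick $y_0$, then $y_1\notin\mathcal{J}_{\rho r}(\langle y_0\rangle)$, etc.; if one cannot reach $n$ points, conclusion (ii) is immediate, and if one can, (i) follows from the two lemmas). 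You instead negate both alternatives along a sequence $\eta_j\downarrow 0$, $\mathbf{k}_j\uparrow\infty$ and drive all three pieces of data (the high-density points $p^j_i$ extracted from the failure of (ii), the sup-defining near-maximizers, and the low-density transition point $y_j$ from the failure of (i)) to a common limit cone, obtaining the contradiction directly at $y_\infty\in\partial E_*$. Both use exactly the same key inputs: Theorem \ref{convergence}, Lemmas \ref{lem Monotonicityformula}, \ref{Uppersemicontinuity}, \ref{Dimension reduction principle}, and the clearing-out argument. The paper's route has the practical advantage of producing a fixed scale parameter $\beta\ll\rho$ and a reusable tube-confinement statement; your route is more compressed and avoids the intermediate tube entirely, going straight to $y_\infty\in\partial E_*$.

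One small inaccuracy you should tighten: ``$U_*$ is $(n-1)$-symmetric, combined with Remark \ref{n-symmetric nonlocal minimal cone} gives $U_*(\cdot,0)=\mathrm{sgn}(x\cdot\nu)$'' does not follow by itself, since $\dim S(U_*)\geq n-1$ also allows the $n$-symmetric case $U_*\equiv\pm1$. To exclude the constant cone you must invoke the transition hypothesis --- e.g.\ $|\tilde U_{\tilde\epsilon_j}(\mathbf 0)|\leq 1-\tau$ together with Lemma \ref{Clear out} and Theorem \ref{convergence}(iii) --- before declaring $\partial E_*$ a hyperplane; otherwise the later step ``$y_\infty\in\partial E_*$'' risks referencing an empty set. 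You flag this as a delicate point, and it is indeed the place where the transition-set assumption does real work, so it needs to be invoked explicitly rather than after the fact.
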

Theorem \ref{thm Allen-Cahn Dichotomy} will be  an easy consequence of the following two lemmas.
\begin{lem}\label{lem Allen-Cahn Dichotomy1}
Given $\gamma>0, \eta'>0, \rho>0$,   there exist  $\beta\in (0, 1/4)$ and $\mathbf{k}\geq 1$ depending only on $n, s, \tau, \gamma, \eta', \rho, \Lambda_{0}, \Lambda_{1}, W$ such that the following holds.

Suppose $ r<1$ and $\mathbf{k}\epsilon\leq r$. For each $x_{0}\in B_{1}(0)$ satisfying $|U_{\epsilon}(x_{0}, 0)|\leq 1-\tau$, we set
\[M=\sup_{x\in B_{r}(x_{0})}\mathbf{\Theta}_{U_{\epsilon}}^{\epsilon}(2r, x).\]
If there are points $y_{0},\cdots, y_{n-1}\in B_{r}(x_{0})$ satisfying
\begin{equation}\label{rho independent}
y_{i}\not\in \mathcal{J}_{\rho r}(\langle y_{0},\cdots, y_{i-1}\rangle),\quad\text{for}~i=1,2,\cdots, n-1
\end{equation}
and
\begin{equation}\label{lem Allen-Cahn Dichotomy1.1}
\mathbf{\Theta}_{U_{\epsilon}}^{\epsilon}(2\beta r, y_{i})\geq M-\beta,\quad\text{for}~i=0,\cdots, n-1,
\end{equation}
then writing $L^{n-1}=\langle y_{0},\cdots, y_{n-1}\rangle$, we have
\begin{equation}\label{lem Allen-Cahn Dichotomy1.2}
\mathbf{\Theta}_{U_{\epsilon}}^{\epsilon}(\gamma\rho r, x)\geq M-\eta',\quad\text{on}~\mathcal{J}_{\beta r}(L^{n-1})\cap B_{r}(x_{0}).
\end{equation}
\begin{proof}
Assume that \eqref{lem Allen-Cahn Dichotomy1.2} fails. Then  there exist $\{\epsilon_{j}\}_{j\in\mathbb{N}}\subset (0, 1), \{\beta_{j}\}_{j\in\mathbb{N}},\{U_{\epsilon_{j}}\}$ satisfying \eqref{Extension equation},  $\{x_{j}\}_{j\in\mathbb{N}}\subset B_{1}(0)$, radii $\{r_{j}\}_{j\in\mathbb{N}}\subset (0, 1)$ such that $r_{j}/\epsilon_{j}\leq 2^{-j}$, $\beta_{j}\to 0$.
Moreover, there is
an collections $\{y_{0}^{(j)}, y_{1}^{(j)},\cdots, y_{n-1}^{(j)}\}\subset B_{r_{j}}(x_{j})$  such that
\begin{equation}\label{rho independentI}
y_{i}^{(j)}\not\in \mathcal{J}_{\rho r_{j}}(\langle y_{0}^{(j)},\cdots, y_{i-1}^{(j)}\rangle),\quad\text{for}~i=1,2,\cdots, n-1
\end{equation}
and
\begin{equation}\label{lem Allen-Cahn Dichotomy1.1I}
\mathbf{\Theta}_{U_{\epsilon_{j}}}^{\epsilon_{j}}(2\beta_{j}r_{j}, y_{i}^{(j)})\geq M_{j}-\beta_{j},\quad\text{for}~i=0,\cdots, n-1,
\end{equation}
but for each $j$, there exists a point $\tilde{x}_{j}\in\mathcal{J}_{\beta_{j}r_{j}}(L_{j}^{n-1})\cap B_{r_{j}}(x_{j})$ such that
\begin{equation}\label{lem3.3.1con}
\mathbf{\Theta}_{U_{\epsilon_{j}}}^{\epsilon_{j}}(\gamma\rho r_{j}, \tilde{x}_{j})<M_{j}-\eta',
\end{equation}
where $L_{j}^{n-1}=\langle y_{0}^{(j)}, y_{1}^{(j)},\cdots, y_{n-1}^{(j)} \rangle$ and
\begin{equation}\label{Mj}
M_{j}=\sup_{x\in B_{r_{j}}(x_{j})}\mathbf{\Theta}_{U_{\epsilon_{j}}}^{\epsilon_{j}}(2r_{j}, x_{j}).
\end{equation}

For each $j$, we set $\tilde{\epsilon}_{j}=\epsilon_{j}/r_{j}$ and $\tilde{U}_{\tilde{\epsilon}_{j}}(\mathbf{x})=U_{\epsilon_{j}}(\mathbf{x}_{j}+r_{j}\mathbf{x})$, where $\mathbf{x}_{j}=(x_{j}, 0)$.
 By Lemma \ref{lem apriori estimate} and Theorem \ref{convergence}, we can assume $\tilde{U}_{\tilde{\epsilon}_{j}}\to U_{\ast}$ strongly, where $U_{\ast}$ satisfies \eqref{eqn1} and \eqref{eqn2}. Moreover, we can assume
 \[M_{j}\to M_{\infty},\quad\text{as}~j\to\infty\]
 and
\[
\begin{aligned}
(y_{i}^{(j)}-x_{j})/r_{j}\to y_{i},\quad\text{for}~i=0, 1, 2, \cdots, n-1,\\
 -x_{j}/r_{j}+L_{j}^{n-1}\to L^{n-1}_{\infty}, \quad\text{for}~i=0, 1, 2, \cdots, n-1,
\end{aligned}\]
where $L_{\infty}^{n-1}=\langle y_{0}, y_{1}, \cdots, y_{n-1} \rangle.$ Since $\tilde{x}_{j}\in\mathcal{J}_{\beta_{j}r_{j}}(L_{j}^{n-1})\cap B_{r_{j}}(x_{j})$,
\[ (\tilde{x}_{j}-x_{j})/r_{j}\to x_{\infty}\in\overline{B_{1}(0)}\cap L_{\infty}^{n-1}.\]
By \eqref{Mj}, \eqref{lem3.3.1con} and the continuity of density, we have
\begin{equation}\label{lem3.3.1con1}
\mathop{\sup}\limits_{z\in B_{1}(0)}\mathbf{\Theta}_{U_{\ast}}(2,z)\leq M_{\infty}
\end{equation}
and
\[\mathbf{\Theta}_{U_{\ast}}(\gamma\rho, x_{\infty})\leq M_{\infty}-\eta'.\]
By \eqref{lem Allen-Cahn Dichotomy1.1} and the upper-semi-continuity, we get
\begin{equation}\label{lem3.3.1con2}
\mathbf{\Theta}_{U_{\ast}}(y_{i})\geq\limsup_{j\to 0} \mathbf{\Theta}_{U_{\ast}}(2\beta_{j}, y_{i})\geq M_{\infty},\quad\forall i=0,\cdots, n-1.
\end{equation}
Combining \eqref{lem3.3.1con1}, \eqref{lem3.3.1con2} with  Lemma \ref{Monotonicityformula}, we deduce that $U_{\ast}$ is homogeneous at $\mathbf{y}_{i}=(y_{i}, 0)$ in $\mathcal{B}^{+}_{2}(\mathbf{y}_{i})$ for each $i$. Since $\{y_{0}, y_{1}, \cdots, y_{n-1}\}\subset B_{1}(0)$,
there exists $\delta>0$ such that
\[\mathcal{B}^{+}_{1+\delta}(\mathbf{0})\subset\mathop{\cup}\limits_{i=0}^{k}\mathcal{B}^{+}_{2}(\mathbf{y}_{i}).\]
 Moreover, we know from Lemma \ref{Dimension reduction principle} that $U_{\ast}$ is translation invariance with respect to $L^{n-1}_{\infty}$ in $\mathcal{B}^{+}_{1+\delta}(\mathbf{0})$. In particular, we have
\[\mathbf{\Theta}_{U_{\ast}}(x_{\infty})=M_{\infty}>\mathbf{\Theta}_{U_{\ast}}(\gamma\rho; x_{\infty}),\]
contradicting the monotonicity formula.
\end{proof}
\end{lem}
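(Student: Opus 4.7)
I would prove Lemma \ref{lem Allen-Cahn Dichotomy1} by a contradiction-and-compactness argument that plays the monotonicity formula of Lemma \ref{lem Monotonicityformula} against the Federer-type dimension reduction of Lemma \ref{Dimension reduction principle}. Assuming the conclusion fails for every admissible $\beta$ and $\mathbf{k}$, one extracts sequences $\epsilon_{j}\downarrow 0$, radii $r_{j}<1$ with $\epsilon_{j}/r_{j}\le 2^{-j}$, tolerances $\beta_{j}\downarrow 0$, base points $x_{j}\in B_{1}(0)$ with $|U_{\epsilon_{j}}(x_{j},0)|\le 1-\tau$, tuples $\{y_{i}^{(j)}\}_{i=0}^{n-1}\subset B_{r_{j}}(x_{j})$ obeying \eqref{rho independent} and \eqref{lem Allen-Cahn Dichotomy1.1}, and offending points $\tilde{x}_{j}\in\mathcal{J}_{\beta_{j}r_{j}}(L_{j}^{n-1})\cap B_{r_{j}}(x_{j})$ with $\mathbf{\Theta}_{U_{\epsilon_{j}}}^{\epsilon_{j}}(\gamma\rho r_{j},\tilde{x}_{j})<M_{j}-\eta'$, where $L_{j}^{n-1}:=\langle y_{0}^{(j)},\dots,y_{n-1}^{(j)}\rangle$ and $M_{j}:=\sup_{B_{r_{j}}(x_{j})}\mathbf{\Theta}_{U_{\epsilon_{j}}}^{\epsilon_{j}}(2r_{j},\cdot)$. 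I would then rescale by $r_{j}$ at $\mathbf{x}_{j}=(x_{j},0)$, setting $\tilde{\epsilon}_{j}:=\epsilon_{j}/r_{j}\downarrow 0$ and $\tilde{U}_{\tilde{\epsilon}_{j}}(\mathbf{x}):=U_{\epsilon_{j}}(\mathbf{x}_{j}+r_{j}\mathbf{x})$, reducing the problem to the unit ball with vanishing diffusion parameter.

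The uniform density bound of Lemma \ref{lem apriori estimate} together with the compactness theorem (Theorem \ref{convergence}) then furnishes a subsequential strong limit $U_{\ast}$ solving \eqref{eqn1} and verifying the stationarity identity \eqref{eqn2}. Along subsequences, the rescaled points $(y_{i}^{(j)}-x_{j})/r_{j}$ converge to affinely $\rho$-independent points $y_{0},\dots,y_{n-1}$, $\tilde{x}_{j}$ converges to some $x_{\infty}\in\overline{B_{1}(0)}\cap L_{\infty}^{n-1}$ with $L_{\infty}^{n-1}:=\langle y_{0},\dots,y_{n-1}\rangle$, and $M_{j}\to M_{\infty}$. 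Continuity of the normalized energy (via item (iv) of Theorem \ref{convergence}) yields $\sup_{z\in\overline{B_{1}(0)}}\mathbf{\Theta}_{U_{\ast}}(2,z)\le M_{\infty}$ and $\mathbf{\Theta}_{U_{\ast}}(\gamma\rho,x_{\infty})\le M_{\infty}-\eta'$, while upper semi-continuity (Lemma \ref{Uppersemicontinuity}) applied as the scales $2\beta_{j}$ collapse to $0$ upgrades \eqref{lem Allen-Cahn Dichotomy1.1} to $\mathbf{\Theta}_{U_{\ast}}(y_{i})\ge M_{\infty}$ for each $i$. The monotonicity formula of Lemma \ref{lem Monotonicityformula} then pins $\mathbf{\Theta}_{U_{\ast}}(\cdot,y_{i})$ to be constant on $(0,2)$, so $U_{\ast}$ is homogeneous about each $\mathbf{y}_{i}=(y_{i},0)$ on $\mathcal{B}_{2}^{+}(\mathbf{y}_{i})$.

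Applying Lemma \ref{Dimension reduction principle} after recentring at $y_{0}$, the vectors $y_{1}-y_{0},\dots,y_{n-1}-y_{0}$ all lie in $S(U_{\ast})$ and, by the limiting form of \eqref{rho independent}, span $L_{\infty}^{n-1}-y_{0}$, forcing $U_{\ast}$ to be invariant under translations along $L_{\infty}^{n-1}$. Because $\{y_{0},\dots,y_{n-1}\}\subset B_{1}(0)$, the balls $\{\mathcal{B}_{2}^{+}(\mathbf{y}_{i})\}$ cover $\mathcal{B}_{1+\delta}^{+}(\mathbf{0})$ for some $\delta>0$, so this invariance holds on $\mathcal{B}_{1+\delta}^{+}(\mathbf{0})$; combined with homogeneity about $y_{0}$ it implies $U_{\ast}$ is also homogeneous about $x_{\infty}$ with density $M_{\infty}$, hence $\mathbf{\Theta}_{U_{\ast}}(\gamma\rho,x_{\infty})=M_{\infty}$, contradicting the strict drop inherited from $\tilde{x}_{j}$. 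The main technical obstacle I expect is the simultaneous passage to the limit of densities at the vanishing scales $2\beta_{j}r_{j}$ and at the fixed scale $2r_{j}$: the former requires upper semi-continuity in the precise sense of Lemma \ref{Uppersemicontinuity}, while the latter hinges on the strong $H^{1}(\mathcal{B}_{R}^{+}(\mathbf{0})\cup B_{R}(\mathbf{0}),z^{a}d\mathbf{x})$ convergence and the $L^{1}$-vanishing of $\tilde{\epsilon}_{j}^{-2s}W(\tilde{U}_{\tilde{\epsilon}_{j}})$ furnished by Theorem \ref{convergence}. Once these two compactness items are in hand, the geometric monotonicity-plus-dimension-reduction argument delivers the contradiction.
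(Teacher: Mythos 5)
Your proposal is correct and follows essentially the same compactness-by-contradiction argument as the paper: rescaling by $r_j$, passing to a strong limit $U_\ast$ via Lemma \ref{lem apriori estimate} and Theorem \ref{convergence}, upgrading the near-maximal densities at the collapsing scales $2\beta_j$ to $\mathbf{\Theta}_{U_\ast}(y_i) \geq M_\infty$ via Lemma \ref{Uppersemicontinuity}, concluding homogeneity about each $\mathbf{y}_i$ from Lemma \ref{lem Monotonicityformula}, propagating translation invariance along $L_\infty^{n-1}$ via Lemma \ref{Dimension reduction principle}, and deriving the contradiction at $x_\infty$. The only difference from the paper is a slightly more detailed articulation of the dimension-reduction step (recentring at $y_0$ and identifying $S(U_\ast)$), which is a helpful clarification but not a distinct strategy.
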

\begin{lem}\label{lem Allen-Cahn Dichotomy2}
Let $\beta=\beta(n, s, \tau, \gamma, \eta', \rho, \Lambda_{0},\Lambda_{1}, W)\ll \rho$ be a constant satisfying Lemma \ref{lem Allen-Cahn Dichotomy1}. There exist positive constants $\eta\ll\beta$ and
$\mathbf{k}>1$ depending only on $n, s, \tau, \gamma, \eta', \rho, \Lambda_{0},\Lambda_{1}, W$ such that the following holds.

Suppose $r<1$ and $\mathbf{k}\epsilon\leq r$. For each $x_{0}\in B_{1}(0)$ satisfying $|U_{\epsilon}(x_{0}, 0)|\leq 1-\tau$, if there are  points $y_{0},\cdots, y_{n-1}\in B_{r}(x_{0})$ satisfying \eqref{rho independent} and
\eqref{lem Allen-Cahn Dichotomy1.1},
then by writing $L^{n-1}=\langle y_{0},\cdots, y_{n-1}\rangle$, we have
\begin{equation}\label{lem Allen-Cahn Dichotomy2.2}
\big\{|U_{\epsilon}(x, 0)|\leq 1-\tau\big\}\cap B_{r}(x_{0})\subset \mathcal{J}_{\beta r}(L^{n-1}).
\end{equation}
\end{lem}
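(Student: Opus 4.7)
The plan is to prove Lemma 7.5 by a contradiction-and-compactness argument that parallels the one used for Lemma 7.4, but which exploits the extra information provided by Theorem 2.2 (iii), namely the locally uniform convergence of $U_{\epsilon_{j}}(\cdot,0)$ to $\pm 1$ away from the limit interface. Suppose the conclusion fails. Then, for any candidate $\eta,\mathbf{k}$, one extracts sequences $\epsilon_{j}\downarrow 0$, $r_{j}\in(0,1)$ with $\epsilon_{j}/r_{j}\to 0$, $x_{j}\in B_{1}(0)$ with $|U_{\epsilon_{j}}(x_{j},0)|\le 1-\tau$, points $y_{0}^{(j)},\ldots,y_{n-1}^{(j)}\in B_{r_{j}}(x_{j})$ satisfying \eqref{rho independent} and \eqref{lem Allen-Cahn Dichotomy1.1} with $M_{j}:=\sup_{x\in B_{r_{j}}(x_{j})}\mathbf{\Theta}_{U_{\epsilon_{j}}}^{\epsilon_{j}}(2r_{j},x)$, and an additional point $\tilde{x}_{j}\in B_{r_{j}}(x_{j})$ with $|U_{\epsilon_{j}}(\tilde{x}_{j},0)|\le 1-\tau$ and $\tilde{x}_{j}\notin \mathcal{J}_{\beta r_{j}}(L_{j}^{n-1})$, where $L_{j}^{n-1}:=\langle y_{0}^{(j)},\ldots,y_{n-1}^{(j)}\rangle$. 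Perform the same rescaling as in Lemma 7.4, $\tilde{U}_{\tilde{\epsilon}_{j}}(\mathbf{x}):=U_{\epsilon_{j}}(\mathbf{x}_{j}+r_{j}\mathbf{x})$, $\tilde{\epsilon}_{j}:=\epsilon_{j}/r_{j}$, and invoke Theorem 2.2 to extract a subsequence with $\tilde{U}_{\tilde{\epsilon}_{j}}\to U_{\ast}$ strongly in $H^{1}(\mathcal{B}_{2}^{+}(\mathbf{0})\cup B_{2}(0),z^{a}d\mathbf{x})$, $(y_{i}^{(j)}-x_{j})/r_{j}\to y_{i}$, $(\tilde{x}_{j}-x_{j})/r_{j}\to \tilde{x}_{\infty}$, $M_{j}\to M_{\infty}$, and the rescaled hyperplanes converging to $L_{\infty}^{n-1}:=\langle y_{0},\ldots,y_{n-1}\rangle$. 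The trace satisfies $U_{\ast}(\cdot,0)=\chi_{E_{\ast}}-\chi_{\mathbb{R}^{n}\setminus E_{\ast}}$.

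The next step is to pin down the structure of $U_{\ast}$ near $\mathbf{0}$. Exactly as in the proof of Lemma 7.4, combining the quantitative almost-homogeneity from \eqref{lem Allen-Cahn Dichotomy1.1} with the upper semicontinuity (Lemma 2.4), the monotonicity formula (Lemma 2.3), and the continuity of the rescaled density along strong limits shows that $U_{\ast}$ is homogeneous about every $\mathbf{y}_{i}=(y_{i},0)$ in $\mathcal{B}_{2}^{+}(\mathbf{y}_{i})$; the $\rho$-independence assumption \eqref{rho independent} passes to the limit, so $L_{\infty}^{n-1}$ is a genuine affine $(n-1)$-plane. The dimension reduction principle (Lemma 3.4) then promotes these $n$ homogeneity points to full translation invariance of $U_{\ast}$ along $L_{\infty}^{n-1}$ on a neighborhood of $\mathbf{0}$ large enough to contain $\tilde{x}_{\infty}$ (here one uses, as in Lemma 7.4, that the balls $\mathcal{B}_{2}^{+}(\mathbf{y}_{i})$ cover such a neighborhood). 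Combined with the boundary constraint $|U_{\ast}(\cdot,0)|=1$ from \eqref{eqn1}, translation invariance along $L_{\infty}^{n-1}$ plus homogeneity at a point of $L_{\infty}^{n-1}$ forces $U_{\ast}(\cdot,0)$ to be constant on each of the two open half-spaces cut out by $L_{\infty}^{n-1}$; hence $\partial E_{\ast}\cap B_{2}(0)\subset L_{\infty}^{n-1}$.

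To close the argument, note that $\tilde{x}_{j}\notin \mathcal{J}_{\beta r_{j}}(L_{j}^{n-1})$ implies $\operatorname{dist}(\tilde{x}_{\infty},L_{\infty}^{n-1})\ge \beta>0$, so $\tilde{x}_{\infty}\notin\partial E_{\ast}$. Theorem 2.2 (iii) then gives $\tilde{U}_{\tilde{\epsilon}_{j}}(\cdot,0)\to\pm 1$ uniformly on an open neighborhood of $\tilde{x}_{\infty}$ in $B_{2}(0)\setminus\partial E_{\ast}$, so $|U_{\epsilon_{j}}(\tilde{x}_{j},0)|=|\tilde{U}_{\tilde{\epsilon}_{j}}((\tilde{x}_{j}-x_{j})/r_{j},0)|\to 1$, contradicting $|U_{\epsilon_{j}}(\tilde{x}_{j},0)|\le 1-\tau$. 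The outputs $\eta$ and $\mathbf{k}$ are the ones implicit in this compactness extraction; since the construction produces a counterexample for every admissible pair, standard compactness quantification yields an explicit $\eta=\eta(n,s,\tau,\gamma,\eta',\rho,\Lambda_{0},\Lambda_{1},W)\ll\beta$ and $\mathbf{k}=\mathbf{k}(n,s,\tau,\gamma,\eta',\rho,\Lambda_{0},\Lambda_{1},W)$.

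The principal obstacle is the rigidity step in the second paragraph, namely verifying that $\partial E_{\ast}$ near $\mathbf{0}$ is exactly contained in $L_{\infty}^{n-1}$. This requires carefully combining the homogeneity statements at the $n$ points $\mathbf{y}_{i}$ with the dimension reduction principle and the trace condition $|U_{\ast}|=1$ to obtain one-dimensional symmetry transverse to $L_{\infty}^{n-1}$; one must ensure the neighborhood on which translation invariance is established actually reaches $\tilde{x}_{\infty}$, which is arranged by keeping $\rho$ fixed (so the $y_{i}$ remain uniformly spread) and choosing $\beta,\eta$ small relative to $\rho$. Once this rigidity is established, the uniform $C^{0}_{\mathrm{loc}}$ convergence away from $\partial E_{\ast}$ provided by Theorem 2.2 (iii) instantly supplies the desired contradiction.
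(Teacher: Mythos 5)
Your proposal is correct and follows the same overall strategy as the paper: contradiction and compactness, rescaling to $\tilde{U}_{\tilde{\epsilon}_j}$, passing to a strong limit $U_\ast$, and establishing that $U_\ast$ is homogeneous at the $n$ limiting points $\mathbf{y}_i$ and translation-invariant along $L^{n-1}_\infty$ on a ball $\mathcal{B}^+_{1+\delta}(\mathbf{0})$ containing $\tilde{x}_\infty$, exactly as in the proof of Lemma~\ref{lem Allen-Cahn Dichotomy1}.

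The only genuine difference is the closing step. The paper blows up $U_\ast$ at $\mathbf{x}_\infty$: since $\mathbf{x}_\infty\notin\mathcal{J}_\beta(L^{n-1}_\infty)$ and $U_\ast$ already carries $n-1$ translations plus homogeneity along $L^{n-1}_\infty$, the blow-up picks up one more invariance and is $n$-symmetric, hence $\pm 1$ by Remark~\ref{n-symmetric nonlocal minimal cone}; then the density at $\mathbf{x}_\infty$ vanishes and Lemma~\ref{Clear out} gives $\tilde{U}_{\tilde{\epsilon}_j}\to\pm 1$ uniformly near $\mathbf{x}_\infty$, contradicting $|\tilde{U}_{\tilde{\epsilon}_j}((\tilde{x}_j-x_j)/r_j,0)|\le 1-\tau$. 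You instead note that translation invariance along the codimension-one plane $L^{n-1}_\infty$ together with homogeneity at one of its points and the trace constraint $|U_\ast(\cdot,0)|=1$ forces the trace to be constant on each half-space determined by $L^{n-1}_\infty$, so $\partial E_\ast\subset L^{n-1}_\infty$ on $B_{1+\delta}(0)$, hence $\tilde{x}_\infty\notin\partial E_\ast$, and Theorem~\ref{convergence}(iii) directly supplies the uniform $C^0$ convergence near $\tilde{x}_\infty$ that yields the contradiction. Both routes are valid and rest on the same structural information about $U_\ast$; yours is marginally shorter because it reads off the global one-dimensional symmetry of the limiting set and invokes the $C^0_{\mathrm{loc}}$ convergence away from $\partial E_\ast$ in one step, bypassing the blow-up at $\mathbf{x}_\infty$ and the separate appeal to the clear-out lemma. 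One small caveat: the translation invariance and homogeneity are only established on $\mathcal{B}^+_{1+\delta}(\mathbf{0})$, so the containment $\partial E_\ast\subset L^{n-1}_\infty$ is only claimed there (rather than on all of $B_2(0)$), but this suffices since $\tilde{x}_\infty\in\overline{B_1(0)}$.
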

\begin{proof}
Suppose that \eqref{lem Allen-Cahn Dichotomy2.2} fails. Then  there exist  $\{\epsilon_{j}\}_{j\in\mathbb{N}}\subset (0, 1), \{U_{\epsilon_{j}}\}$, $\{x_{j}\}_{j\in\mathbb{N}}\subset B_{1}(0)$, radii $\{r_{j}\}_{j\in\mathbb{N}}\subset (0, 1)$, $\{y_{0}^{(j)}, y_{1}^{(j)},\cdots, y_{n-1}^{(j)}\}\subset B_{r_{j}}(x_{j})$ satisfying the assumptions in the proof of Lemma \ref{lem Allen-Cahn Dichotomy1} and $\{\eta_{j}\}_{j\in\mathbb{N}}$  such that $\eta_{j}\to 0$. But for each $j$, there exists a point  $\tilde{x}_{j}\in B_{r_{j}}(x_{j})\backslash\mathcal{J}_{\beta r_{j}}(L_{j}^{n-1})$ such that \[
|U_{\epsilon_{j}}(\tilde{x}_{j}, 0)|\leq 1-\tau.\]

For each $j$, we set $\tilde{\epsilon}_{j}=\epsilon_{j}/r_{j}$ and $\tilde{U}_{\tilde{\epsilon}_{j}}(\mathbf{x})=U_{\epsilon_{j}}(\mathbf{x}_{j}+r_{j}\mathbf{x})$, where $\mathbf{x}_{j}=(x_{j}, 0)$. Then
\begin{equation}\label{xjvalue}
\Big|\tilde{U}_{\tilde{\epsilon}_{j}}(\frac{\tilde{x}_{j}-x_{j}}{r_{j}}, 0)\Big|\leq 1-\tau.
\end{equation}
By Theorem \ref{convergence}, we can assume $\tilde{U}_{\tilde{\epsilon}_{j}}\to U_{\ast}$ strongly, where $U_{\ast}$ is a stationary solution of \eqref{eqn1}. As in the proof of Lemma \ref{lem Allen-Cahn Dichotomy1}, we   get a constant $\delta>0$ such that the resulting $U_{\ast}$ is translation invariance with respect to $L_{\infty}^{n-1}$ in $\mathcal{B}_{1+\delta}^{+}(\mathbf{0})$. Moreover, for $i=0, 1,2, \cdots, n-1$, $U_{\ast}$ is homogeneous with respect to $\mathbf{y}_{i}=(y_{i}, 0)$ in $\mathcal{B}^{+}_{2}(\mathbf{y}_{i})$. Since $x_{\infty}\in\overline{B_{1}(0)}\backslash \mathcal{J}_{\beta}(L^{n-1}_{\infty})$, any blow up of $U_{\ast}$ at $\mathbf{x}_{\infty}=(x_{\infty}, 0)$ will be $n$-symmetric in $\mathcal{B}_{r_{0}}^{+}(\mathbf{x}_{\infty})$, where $r_{0}$ is a fixed positive constant. By Remark \ref{n-symmetric nonlocal minimal cone} and Lemma \ref{Clear out}, there exists a constant $s_{0}$ such that $U_{\ast}=1$ in $\mathcal{B}^{+}_{s_{0}}(\mathbf{x}_{\infty})$ or $U_{\ast}=1$ in $\mathcal{B}^{+}_{s_{0}}(\mathbf{x}_{\infty})$. Without loss of generality, we may assume $U_{\ast}=1$ in $\mathcal{B}^{+}_{s_{0}}(\mathbf{x}_{\infty})$. Now Theorem \ref{convergence} yields that $\tilde{U}_{\tilde{\epsilon}_{j}}\to 1$ uniformly on $\mathcal{B}^{+}_{s}(\mathbf{0})$ for any $s\in (0, s_{0})$, which contradicts our assumption \eqref{xjvalue}.
\end{proof}
\begin{proof}[Proof of Theorem \ref{thm Allen-Cahn Dichotomy}]
Let $\beta=\beta(n, s, \tau, \gamma, \eta', \rho, \Lambda_{0},\Lambda_{1}, W)\ll \rho$ be a constant satisfying Lemma \ref{lem Allen-Cahn Dichotomy1} and $\eta=\eta(n, s, \tau, \gamma, \eta', \rho, \Lambda_{0},\Lambda_{1}, W)\ll\beta(n, s, \tau, \gamma, \eta', \rho, \Lambda_{0},\Lambda_{1}, W)$ be a positive constant satisfying Lemma \ref{lem Allen-Cahn Dichotomy2}. Let $\mathbf{k}(n, s, \tau, \gamma, \eta', \rho, \Lambda_{0},\Lambda_{1},W)\geq 1$ be a  constant satisfying both Lemma \ref{lem Allen-Cahn Dichotomy1} and   Lemma \ref{lem Allen-Cahn Dichotomy2}.

Let $ r\in (0, 1),\mathbf{k}\epsilon\leq r, x_{0}\in B_{1}(0)$ and
\[M:=\sup_{x\in B_{2r}(x_{0})}\mathbf{\Theta}_{U_{\epsilon}}^{\epsilon}(2r, x).\]
If
\[
\{x:\mathbf{\Theta}_{U_{\epsilon}}^{\epsilon}(2\eta r, x)\geq M-\eta\}\cap B_{r}(x_{0})=\emptyset,\]
it is trivial that (ii) in Theorem \ref{thm Allen-Cahn Dichotomy} holds. Therefore, we may assume that
\[\{x:\mathbf{\Theta}_{U_{\epsilon}}^{\epsilon}(2\eta r, x)\geq M-\eta\}\cap B_{r}(x_{0})\neq \emptyset.\]
Let $y_{0}$ be a point such that $y_{0}\in \{x:\mathbf{\Theta}_{U_{\epsilon}}^{\epsilon}(2\eta r, x)\geq M-\eta\}\cap B_{r}(x_{0})$. If
\[\{x:\mathbf{\Theta}_{U_{\epsilon}}^{\epsilon}(2\eta r, x)\geq M-\eta\}\cap B_{r}(x_{0})\subset B_{\rho r}(y_{0}),\]
then (ii) in Theorem \ref{thm Allen-Cahn Dichotomy} holds. Otherwise, we can choose a point $y_{1}\not\in B_{\rho r}(y_{0})$ such that $y_{1}\in B_{r}(x_{0})$ and $\mathbf{\Theta}_{U_{\epsilon}}^{\epsilon}(2\eta r, y_{1})\geq M-\eta$.

Continuing this procedure, if we can find $y_{0},\cdots, y_{k}$ as in Lemma \ref{lem Allen-Cahn Dichotomy2}, then conclusion Theorem \ref{thm Allen-Cahn Dichotomy} (i) is immediate. Otherwise, there is some $(n-2)-$dimensional affine plane $L^{n-2}$ such that
\[\mathbf{\Theta}_{U_{\epsilon}}^{\epsilon}(2\eta r, y)<M-\eta\]
for each $y\in B_{r}(x_{0})\backslash \mathcal{J}_{\rho r}(L^{n-2})$, which is Theorem \ref{thm Allen-Cahn Dichotomy}(ii).
\end{proof}

\section{A corona-type decomposition}
\setcounter{equation}{0}
In this section, we build a crucial covering theorem using the results  from the previous sections.
\begin{thm}\label{the4.1}
There exist $\eta\in (0, 1/4), c$ and $\mathbf{k}\geq 1$ depending only on $n, s, \tau, \Lambda_{0}$, $\Lambda_{1},W$ such that the following holds.

Assume
\begin{equation}\label{B1 upperbound}
\sup\limits_{y\in B_{2}(0)}\mathbf{\Theta}_{U_{\epsilon}}^{\epsilon}(2, y)= M
\end{equation}
and $\mathbf{k}\epsilon\leq r<1$. We set
\[\mathcal{V}^{\epsilon}:=B_{1}(0)\cap \big\{|U_{\epsilon}(x, 0)|\leq 1-\tau\big\}.\]
Then there is a collection of balls $\{B_{r_{x}}(x)\}_{x\in\mathcal{U}}$, with $r_{x}\leq 1/10$ and  $x\in \mathcal{V}^{\epsilon}$, which satisfies the following properties:

(A)~Covering:
\begin{equation*}
\mathcal{V}^{\epsilon}\subset\mathop{\cup}_{x\in\mathcal{U}}B_{r_{x}}(x).
\end{equation*}

(B)~Packing:
\[\sum\limits_{x\in\mathcal{U}}r_{x}^{n-1}\leq c(n, s, \tau, \Lambda_{0}, \Lambda_{1},W).\]

(C)~Energy drop: for every $x\in\mathcal{U}$ we have $r_{x}\geq r$, and either $r_{x}=r$ or
\[\sup_{y\in B_{2r_{x}}(x)}\mathbf{\Theta}_{U_{\epsilon}}^{\epsilon}(2r_{x},y)\leq M-\frac{\eta}{2}.\]
\end{thm}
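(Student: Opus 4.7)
My strategy is a Naber--Valtorta style stopping-time refinement: I build $\mathcal{U}$ iteratively, finalising a ball the moment the prescribed $\eta/2$ energy drop is observed (or the scale $r$ is reached), and otherwise subdividing it via Theorem~\ref{thm Allen-Cahn Dichotomy} using the $(n{-}2)$-dimensional concentration of the high-density set. The total packing will be controlled from two complementary sources: Lemma~\ref{key packing} (discrete Reifenberg) absorbs the portion where the density remains high at every scale, while a geometric series on refinement levels handles the tube-concentrated children.

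\textbf{Construction.} Fix a parameter hierarchy $\eta\ll\eta'\ll\gamma\rho\ll\rho<1/10$ adapted to Theorem~\ref{thm Allen-Cahn Dichotomy} and Lemma~\ref{key packing}, and choose $\mathbf{k}$ so large that every rescaled solution $\tilde U_{\tilde\epsilon}(\mathbf{y})=U_{\epsilon}(\mathbf{x}_0+s\mathbf{y})$ still satisfies the hypotheses of both results whenever $s\geq r$. Maintain a pending family $\mathcal{P}:=\{B_1(0)\}$ and a final family $\mathcal{F}:=\emptyset$. Process each $B_s(x)\in\mathcal{P}$ as follows. If $s=r$ or
\[\sup_{y\in B_{2s}(x)}\mathbf{\Theta}^{\epsilon}_{U_{\epsilon}}(2s,y)\leq M-\eta/2,\]
move $B_s(x)$ into $\mathcal{F}$. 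Otherwise invoke Theorem~\ref{thm Allen-Cahn Dichotomy} on $B_s(x)$. In case~(i) of the dichotomy, every $y\in\mathcal{V}^{\epsilon}\cap B_s(x)$ satisfies $\mathbf{\Theta}^{\epsilon}_{U_{\epsilon}}(\gamma\rho s,y)\geq M-\eta'$; selecting a maximal $\gamma\rho s/5$-separated subset $\{p_i\}$ of $\mathcal{V}^{\epsilon}\cap B_s(x)$, the disjoint balls $\{B_{\gamma\rho s/5}(p_i)\}$ satisfy (after rescaling the parent to unit size and noting $\eta'\leq\eta$, $\gamma\rho=\eta r_p$) the hypothesis of Lemma~\ref{key packing}, and hence can be transferred directly into $\mathcal{F}$ (truncating radii to $r$ if $\gamma\rho s<r$). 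In case~(ii), the set $\{y:\mathbf{\Theta}^{\epsilon}(2\eta s,y)\geq M-\eta\}\cap B_s(x)$ lies in a tube $\mathcal{J}_{\rho s}(L^{n-2})$; cover this tube by $\leq C(n)\rho^{-(n-2)}$ balls of radius $2\rho s$ and append them to $\mathcal{P}$, while the complementary low-density points already satisfy the terminal $\eta/2$-drop criterion on a ball of radius $2\eta s$ and are absorbed into $\mathcal{F}$.

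\textbf{Packing and main obstacle.} Because each case-(ii) refinement strictly decreases the radius by a factor $\rho<1/10$, the iteration terminates after at most $\lesssim\log_{1/\rho}(1/r)$ levels, so every terminal ball has $r_x\geq r$ with the required drop, yielding (A) and (C). For (B), the case-(ii) children contribute
\[C(n)\rho^{-(n-2)}(2\rho s)^{n-1}=2^{n-1}C(n)\rho\cdot s^{n-1}\]
to the packing measure, which is $\leq\tfrac12 s^{n-1}$ once $\rho$ is chosen small; summing over refinement levels yields a convergent geometric series bounded by a dimensional constant. The case-(i) families are packed by Lemma~\ref{key packing} at each parent scale, contributing $\leq c(n)s^{n-1}$ per parent, and incorporating these into the geometric tree preserves summability. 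The principal obstacle is the simultaneous calibration of $(\eta,\eta',\gamma,\rho,\mathbf{k})$: the case-(i) conclusion must rescale exactly to the hypothesis $\mathbf{\Theta}^{\epsilon}(\eta r_p,p)\geq M-\eta$ of Lemma~\ref{key packing}, the dichotomy thresholds must align with the terminal drop $\eta/2$, and the dimensional gain $2^{n-1}C(n)\rho<1/2$ must survive all other constraints. Once the hierarchy is fixed, the remaining verifications (covering, measurability, and the bound on the geometric series) are routine consequences of the monotonicity formula and the upper semicontinuity of density.
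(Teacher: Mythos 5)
There is a genuine gap in your treatment of case~(i) of the dichotomy, and it is structural rather than a matter of parameter calibration. In case~(i) you cover $\mathcal V^\epsilon\cap B_s(x)$ by a maximal net of balls of radius $\gamma\rho s/5$ and ``transfer them directly into $\mathcal F$.'' But the balls in $\mathcal U=\mathcal F$ must satisfy property~(C): either $r_x=r$, or $\sup_{y\in B_{2r_x}(x)}\mathbf\Theta^\epsilon_{U_\epsilon}(2r_x,y)\le M-\eta/2$. The case~(i) conclusion of Theorem~\ref{thm Allen-Cahn Dichotomy} says precisely the opposite: the density is \emph{high}, $\mathbf\Theta^\epsilon_{U_\epsilon}(\gamma\rho s,y)\ge M-\eta'$, for every $y\in\mathcal V^\epsilon\cap B_s(x)$, and the parent was processed rather than finalised exactly because the $\eta/2$-drop failed at scale $s$. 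Since $\mathbf\Theta^\epsilon_{U_\epsilon}(\cdot,y)$ is monotone increasing, the small-scale density at the centres $p_i$ can be as large as $M$, and there is no way to deduce the required drop at scale $\gamma\rho s/5$. So whenever $\gamma\rho s>r$ your finalised balls violate~(C). Lemma~\ref{key packing} gives a \emph{packing bound}, not an energy drop, so invoking it at the parent scale does not rescue the step. You implicitly acknowledge a tension here (``the case-(i) conclusion must rescale exactly to the hypothesis of Lemma~\ref{key packing}''), but the problem is not that the parameters refuse to line up: it is that case~(i) balls are inherently the \emph{wrong kind} of ball to finalise.

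The paper resolves this by never finalising high-density balls: good balls are recursively subdivided (the ``good tree'') and only two kinds of balls ever leave the recursion, namely balls at scale $\approx r$ (automatically fine for~(C)) and, inside bad trees, stop balls lying \emph{outside} the $(n-2)$-tube, which do satisfy the drop because the tube inclusion in Remark~\ref{new upperbound} (applied with $M+\eta/2$ and $\rho/2$ so that a whole $\rho$-neighbourhood of a stop centre sits in the low-density region, combined with $\eta<\rho/2$) controls all of $B_{2r_d}(d)$, not just the centre. Lemma~\ref{key packing} enters only to bound the aggregate packing of the disjoint bad/stop balls produced across all scales of a good tree (Theorem~\ref{thegoodtree}), while the $\rho$-geometric decay lives in the bad trees (Theorem~\ref{thebadtree}), and the two are interleaved via the alternation of Theorem~\ref{thealttree}. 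Your one-level ``good = finalise via Lemma~\ref{key packing}'' shortcut collapses this multi-scale alternation and destroys the energy-drop invariant. A second, smaller issue, repairable by the same $M+\eta/2$ and $2\rho$-versus-$\rho$ buffering trick the paper uses, is your claim that points outside the tube ``already satisfy the terminal $\eta/2$-drop criterion on a ball of radius $2\eta s$'': the raw dichotomy controls the density only at the single point $y$, and without the buffer you cannot pass to a supremum over $B_{2\eta s}(y)$.
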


The rest of this section is devoted to the proof of Theorem \ref{the4.1}. The crucial observation that makes it works is the dichotomy Theorem \ref{thm Allen-Cahn Dichotomy}. Throughout this section, we mainly follow the arguments in \cite{Edelen-Engelstein2019}, although many suitable modifications are needed.

Let $0<\rho<1/10$ be a positive constant which will be determined later. For each integer $i\geq 0$, we write $r_{i}=\rho^{i}$.
\begin{defi}[Good/Bad ball]\label{defgoodball}
Fix a positive constant $\mathbf{k}$ satisfying all the results in the previous sections. Let $\gamma$ be a positive constant satisfying Lemma \ref{Quantutative homogeneous}. Take $x\in B_{1}(0)$ and a constant $t$ such that $\mathbf{k}\epsilon\leq r<t\leq 1$. We say a ball $B_{t}(x)$ is good if $|U_{\epsilon}(x, 0)|\leq 1-\tau$ and
\[\mathbf{\Theta}_{U_{\epsilon}}^{\epsilon}(\gamma\rho t, y)\geq M-\eta',\quad\text{on}~\big\{|U_{\epsilon}(x, 0)|\leq 1-\tau\big\}\cap B_{t}(x).\]
Otherwise, we say $B_{t}(x)$ is a bad ball.
\end{defi}
\begin{rmk}\label{new upperbound}
Using $M+\eta/2$ instead of $M$, $\rho/2$ instead of $\rho$ in Theorem \ref{thm Allen-Cahn Dichotomy}, we deduce that if $B_{t}(x)$ is a bad ball, then there exists a $(n-2)$dimensional-affine plane $L^{n-2}$ such that
\begin{equation}\label{bad ball property}
\big\{y:\mathbf{\Theta}_{U_{\epsilon}}^{\epsilon}(2\eta t, y)\geq M-\frac{\eta}{2}\big\}\cap B_{2t}(x)\subset \mathcal{J}_{\rho t}(L^{n-2}).
\end{equation}
\end{rmk}
\subsection{Good tree construction}
Suppose $B_{r_{A}}(a)$ is a good ball at scale $A\geq 0$, where $r_{A}>r$ and $a\in B_{1}(0)$. We inductively define, for each $i\geq A$, a family of good balls $\{B_{r_{i}}(g)\}_{g\in\mathcal{G}_{i}}$,   bad balls $\{B_{r_{i}}(b)\}_{b\in\mathcal{B}_{i}}$ and stop balls $\{B_{r_{i}}(d)\}_{d\in\mathcal{D}_{i}}$.

For $i=A$, we set $\mathcal{G}_{A}=\{a\}$ and $\mathcal{B}_{A}=\mathcal{D}_{A}=\emptyset$. Let $J_{A+1}$ be a maximal $2r_{A+1}/5$-net of $\mathcal{V}^{\epsilon}\cap B_{r_{A}}(a).$

If $r_{A+1}\leq r$, we set \[\mathcal{G}_{A+1}=\mathcal{B}_{A+1}=\emptyset,\quad\mathcal{D}_{A+1}=J_{A+1}.\]
In other words, we stop building the tree.

If $r_{A+1}>r$, we set
\[\mathcal{G}_{A+1}=\{y\in J_{A+1}: B_{r_{A+1}}(y)~\text{is a good ball}\},\]
\[\mathcal{B}_{A+1}=\{y\in J_{A+1}: B_{r_{A+1}}(y)~\text{is a bad ball}\}\]
and $\mathcal{D}_{A+1}=\emptyset$.

Suppose we have constructed the good/bad/stop balls down through scale $i-1$. Let $J_{i}$ be a maximal $2r_{i}/5$-net of
\[\Big(\mathcal{V}^{\epsilon}\cap B_{r_{A}}(a)\cap\big(\mathop{\cup}_{g\in\mathcal{G}_{i-1}} B_{r_{i-1}}(g)\big)\Big)\backslash\big(\mathop{\cup}\limits_{\ell=A}^{i-1}\mathop{\cup}\limits_{b\in\mathcal{B}_{l}}B_{r_{\ell}}(b)\big).\]

If $r_{i}\leq r$, we let $\mathcal{D}_{i}=J_{i}$ and $\mathcal{G}_{i}=\mathcal{B}_{i}=\emptyset$.  Otherwise, we divide the balls $\{B_{r_{i}}(y)\}_{y\in J_{i}}$ into good balls and bad balls according to definition \ref{defgoodball} and set $\mathcal{D}_{i}=\emptyset$.
 \begin{defi}\label{defgoodtree}
 The construction above is called the good tree rooted at $B_{r_{A}}(a)$, and may be written as $\mathcal{T}_{\mathcal{G}}(B_{r_{A}}(a))$. Given a good tree, we define the tree leaves
 \[\mathcal{F}(\mathcal{T}_{\mathcal{G}}(B_{r_{A}}(a))):=\{\text{the collection of bad ball centers across scales}\}\]
 and
 \[\mathcal{D}(\mathcal{T}_{\mathcal{G}}(B_{r_{A}}(a))):=\{\text{the collection of stop ball centers}\}.\]
 We use $\{r_{f}\}_{f\in \mathcal{F}(\mathcal{T}_{\mathcal{G}}(B_{r_{A}}(a)))}$  and $\{r_{d}\}_{d\in \mathcal{D}(\mathcal{T}_{\mathcal{G}}(B_{r_{A}}(a)))}$ to denote the associated radius functions for the leaves $\mathcal{F}(\mathcal{T}_{\mathcal{G}}(B_{r_{A}}(a)))$, stop ball leaves $\mathcal{D}(\mathcal{T}_{\mathcal{G}}(B_{r_{A}}(a)))$ respectively.
 \end{defi}
 \begin{lem}\label{good tree property}
Let $\mathcal{T}_{\mathcal{G}}(B_{r_{A}}(a))$ be a good tree, then
\begin{itemize}
\item[(i)]the collection of stop/ bad $r_{i}/5$- balls
\[\left\{B_{r_{b}/5}(b):b\in\mathop{\cup}\limits_{i=A}^{\infty}\mathcal{B}_{i}\right\}\bigcup \left\{B_{r_{d}/5}(d):d\in\mathop{\cup}\limits_{i=A}^{\infty}\mathcal{D}_{i}\right\}\]
are pairwise disjoint;
\item[(ii)] for each $y\in\mathcal{F}(\mathcal{T}_{\mathcal{G}}(B_{r_{A}}(a)))\cup \mathcal{D}(\mathcal{T}_{\mathcal{G}}(B_{r_{A}}(a)))$, we have
\begin{equation}\label{small energy drop}
\mathbf{\Theta}_{U_{\epsilon}}^{\epsilon}(\gamma r_{y}, y)\geq M-\eta'.
\end{equation}
\end{itemize}
 \end{lem}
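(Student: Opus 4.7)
My plan is to prove both parts directly from the inductive tree construction together with the defining property of a good ball (Definition~\ref{defgoodball}); no further PDE input is required, and the argument is essentially combinatorial bookkeeping.

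For part (i), I would first handle the same-scale case: any two distinct centers at scale $i$ lie in the $(2r_{i}/5)$-separated net $J_{i}$, so their $(r_{i}/5)$-balls are disjoint. For the cross-scale case, suppose $b\in\mathcal{B}_{i}$ and $y\in\mathcal{B}_{i'}\cup\mathcal{D}_{i'}$ with $A\leq i<i'$. By construction, $J_{i'}$ is chosen inside the set
\[
\bigl(\mathcal{V}^{\epsilon}\cap B_{r_{A}}(a)\cap\bigcup_{g\in\mathcal{G}_{i'-1}} B_{r_{i'-1}}(g)\bigr)\setminus\bigcup_{\ell=A}^{i'-1}\bigcup_{b'\in\mathcal{B}_{\ell}}B_{r_{\ell}}(b'),
\]
so $y\notin B_{r_{i}}(b)$ and $|y-b|\geq r_{i}$. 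Since $r_{i'}\leq \rho r_{i}<r_{i}/10$, one gets $r_{i}/5+r_{i'}/5<r_{i}\leq|y-b|$, so $B_{r_{i}/5}(b)\cap B_{r_{i'}/5}(y)=\emptyset$. The stop/stop and bad/stop cross-scale situations are covered by the same estimate, noting that stop balls appear at a single scale only (the first $i$ with $r_{i}\leq r$, after which the construction terminates).

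For part (ii), fix a leaf $y\in \mathcal{F}(\mathcal{T}_{\mathcal{G}}(B_{r_{A}}(a)))\cup \mathcal{D}(\mathcal{T}_{\mathcal{G}}(B_{r_{A}}(a)))$. Then $y\in J_{i}$ for some $i\geq A+1$ (since $\mathcal{B}_{A}=\mathcal{D}_{A}=\emptyset$), so $y\in\mathcal{V}^{\epsilon}$, and moreover $y\in B_{r_{i-1}}(g)$ for some $g\in\mathcal{G}_{i-1}$ (where for $i=A+1$ we take $g=a$). Because $B_{r_{i-1}}(g)$ is a good ball, Definition~\ref{defgoodball} applied with $t=r_{i-1}$ yields
\[
\mathbf{\Theta}_{U_{\epsilon}}^{\epsilon}(\gamma\rho r_{i-1},y)\geq M-\eta'.
\]
Since $\gamma\rho r_{i-1}=\gamma r_{i}=\gamma r_{y}$, this is precisely \eqref{small energy drop}.

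I do not expect any genuine obstacle here; the only delicate point is making sure the cross-scale disjointness argument accounts correctly for which balls are subtracted at each scale in the construction of $J_{i'}$, and that the choice $\rho<1/10$ leaves room in the triangle inequality $r_{i}/5+r_{i'}/5<r_{i}$. Neither the monotonicity formula of Section~2 nor the $\beta$-number estimates of Section~5 enter; this lemma purely records book-keeping properties of the tree that will be combined with the Reifenberg-type packing of Section~6 in the subsequent covering argument.
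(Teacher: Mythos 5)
Your proof is correct and follows essentially the same route as the paper's: same-scale disjointness comes from the $2r_i/5$-net property, cross-scale disjointness from the fact that $J_{i'}$ excludes all earlier bad balls $B_{r_\ell}(b')$, and the energy drop (ii) follows by applying Definition~\ref{defgoodball} to the good parent ball $B_{r_{i-1}}(g)$ and using $\gamma\rho r_{i-1}=\gamma r_i=\gamma r_y$. The only cosmetic difference is that you spell out the cross-scale triangle-inequality bound and the observation that stop balls occur at a single terminal scale somewhat more explicitly than the paper does.
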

 \begin{proof}
 Let $y, y'$ be two points in $\mathcal{F}(\mathcal{T}_{\mathcal{G}}(B_{r_{A}}(a)))\cup \mathcal{D}(\mathcal{T}_{\mathcal{G}}(B_{r_{A}}(a)))$. First, suppose $y\in\mathcal{B}_{i}, y' \in\mathcal{B}_{i'}$.

 If $i'=i$, we get from the construction and the definition of $2r_{i}/5-$ net that $|y-y'|\geq 2r_{i}/5$. Therefore,
 $B_{r_{i}/5}(y)\cap B_{r_{i}/5}(y')=\emptyset.$

 If $i'>i$, we know from the construction that \[y'\not\in\mathop{\cup}\limits_{\ell=A}^{i'-1}\mathop{\cup}\limits_{b\in\mathcal{B}_{\ell}}B_{r_{\ell}}(b).\]
 In particular, we have $|y-y'|\geq r_{i}$. Since $r_{i}/5+r_{i'}/5<r_{i}$,
$B_{r_{i}/5}(y)\cap B_{r_{i}/5}(y')=\emptyset.$
 The other cases can be discussed similarly, hence we have proved (i).

If $y\in\mathcal{B}_{i}\subset \mathcal{F}(\mathcal{T}_{\mathcal{G}}(B_{r_{A}}(a)))$, then there exist an integer $i\geq A+1$  and a point $g\in\mathcal{G}_{i-1}$ such that $y\in B_{r_{i-1}}(g)$. Since $B_{r_{i-1}}(g)$ is a good ball, then we get from Definition \ref{defgoodball} that
\[
\mathbf{\Theta}_{U_{\epsilon}}^{\epsilon}(\gamma\rho r_{i-1}, y)=\mathbf{\Theta}_{U_{\epsilon}}^{\epsilon}(\gamma r_{i}, y)\geq M-\eta'. \]
Because $r_{y}=r_{i}$,  we  get \eqref{small energy drop}. Similarly, we can prove that if $y\in\mathcal{D}(\mathcal{T}_{\mathcal{G}}(B_{r_{A}}(a)))$, then \eqref{small energy drop} holds.
 \end{proof}
 \begin{thm}\label{thegoodtree}
Let $\mathcal{T}_{\mathcal{G}}(B_{r_{A}}(a))$ be a good tree. We have

(A) Tree-leaf packing:
\[\sum\limits_{f\in \mathcal{F}(\mathcal{T}_{\mathcal{G}}(B_{r_{A}}(a)))}r_{f}^{n-1}\leq c_{1}(n) r_{A}^{n-1}.\]

(B) Stop ball packing:
\[\sum\limits_{d\in \mathcal{D}(\mathcal{T}_{\mathcal{G}}(B_{r_{A}}(a)))}r_{d}^{n-1}\leq c(n)r_{A}^{n-1}.\]

(C) Covering:
\[\mathcal{V}^{\epsilon}\cap B_{r_{A}}(a)\subset\big(\mathop{\cup}\limits_{d\in \mathcal{D}(\mathcal{T}_{\mathcal{G}}(B_{r_{A}}(a)))}B_{r_{d}}(d)\big)\bigcup\big(\mathop{\cup}\limits_{f\in \mathcal{F}(\mathcal{T}_{\mathcal{G}}(B_{r_{A}}(a)))}B_{r_{f}}(f)\big).\]

(D) Stop ball structure: for any $d\in \mathcal{D}(\mathcal{T}_{\mathcal{G}}(B_{r_{A}}(a)))$, we have $\rho r<r_{d}\leq r.$
\end{thm}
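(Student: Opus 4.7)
The plan is to reduce parts (A) and (B) to a single application of the key packing Lemma \ref{key packing}, using the combined disjoint collection of balls $\{B_{r_y/5}(y)\}$ with $y$ ranging over $\mathcal{F}\cup\mathcal{D}$, which is provided by Lemma \ref{good tree property}(i); parts (C) and (D) will be read off the construction. Before starting, the free constants in Definition \ref{defgoodball} of a good ball must be fixed compatibly with Lemma \ref{key packing}: letting $\eta_{0}=\eta(n,s,\tau,\Lambda_{0},\Lambda_{1},W)$ denote the constant supplied by Lemma \ref{key packing}, I take $\gamma\leq\eta_{0}/10$ and $\eta'\leq\eta_{0}$, and enlarge the overall threshold $\mathbf{k}$ so as to dominate the thresholds from all lemmas invoked below.

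To prove (A) and (B), rescale by $r_{A}$ around the root: put $\tilde{U}_{\tilde{\epsilon}}(\mathbf{x}):=U_{\epsilon}(\mathbf{a}+r_{A}\mathbf{x})$ with $\tilde{\epsilon}:=\epsilon/r_{A}$, and write $\tilde{y}:=(y-a)/r_{A}$, $\tilde{r}_{y}:=r_{y}/(10 r_{A})$ for each leaf $y\in\mathcal{F}\cup\mathcal{D}$. Lemma \ref{good tree property}(i) rescales to disjointness of the balls $\{B_{2\tilde{r}_{y}}(\tilde{y})\}$, whose centers all lie in $\{|\tilde{U}_{\tilde{\epsilon}}(\cdot,0)|\leq 1-\tau\}\cap B_{1}(0)$. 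Assumption \eqref{B1 upperbound} together with Lemma \ref{lem Allen-Cahn monotonicity formula} gives $\sup_{z\in B_{1}(0)}\mathbf{\Theta}^{\tilde{\epsilon}}_{\tilde{U}_{\tilde{\epsilon}}}(2,z)\leq M$. The density lower bound \eqref{small energy drop}, the monotonicity formula, and the choice $\gamma\leq\eta_{0}/10$, $\eta'\leq\eta_{0}$ together yield $\mathbf{\Theta}^{\tilde{\epsilon}}_{\tilde{U}_{\tilde{\epsilon}}}(\eta_{0}\tilde{r}_{y},\tilde{y})\geq M-\eta_{0}$ at every leaf. Lemma \ref{key packing} then produces $\sum_{y\in\mathcal{F}\cup\mathcal{D}}\tilde{r}_{y}^{\,n-1}\leq c(n)$; undoing the rescaling and multiplying by $(10r_{A})^{n-1}$ gives both (A) and (B) at once.

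For (C), maximality of the net $J_{i}$ forces any point of $\mathcal{V}^{\epsilon}\cap B_{r_{A}}(a)\cap\bigcup_{g\in\mathcal{G}_{i-1}}B_{r_{i-1}}(g)$ not already lying in some earlier bad ball to be within distance $2r_{i}/5$ of a net point, hence in $B_{r_{i}}(y)$ for some $y\in J_{i}$. Iterating, the point either enters a bad ball at some scale (and is covered by an element of $\mathcal{F}$) or persists down to the first index $i^{\ast}$ with $r_{i^{\ast}}\leq r$, at which point the entire net becomes $\mathcal{D}_{i^{\ast}}$ and the point is covered by a stop ball. For (D), the scale $r_{d}=r_{i^{\ast}}$ satisfies $r_{d}\leq r$ by construction and $r_{d}=\rho r_{i^{\ast}-1}>\rho r$ since $r_{i^{\ast}-1}>r$.

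The main obstacle is not analytical but organizational: the constants $\gamma,\eta',\eta_{0},\mathbf{k},\rho$ appear in several interlocking results (Lemma \ref{Quantutative homogeneous}, Lemma \ref{key packing}, Theorem \ref{thm Allen-Cahn Dichotomy} together with Remark \ref{new upperbound}, and Definition \ref{defgoodball}), and one must fix them in the correct order — first $\eta_{0}$ from Lemma \ref{key packing}, then $\gamma,\eta'$ subordinate to $\eta_{0}$, and finally $\rho$ small enough for the dichotomy to provide the bad-ball structure \eqref{bad ball property} — so that every invocation in the construction is legitimate. Once this hierarchy is pinned down, the entire packing output is a clean consequence of Lemma \ref{key packing}.
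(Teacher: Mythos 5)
Your proposal is correct and follows essentially the same route as the paper: parts (A) and (B) are obtained by feeding the disjoint $r_y/5$-balls (Lemma \ref{good tree property}(i)), whose centers carry the density lower bound \eqref{small energy drop}, into Lemma \ref{key packing} at scale $B_{r_A}(a)$, while (C) follows from maximality of the nets by induction and (D) from the stopping rule. The only stylistic differences are that you carry out the rescaling $\tilde r_y=r_y/(10 r_A)$ explicitly rather than invoking Lemma \ref{key packing} ``at scale $B_{r_A}(a)$,'' and that you spell out the constant hierarchy ($\gamma\leq\eta_0/10$, $\eta'\leq\eta_0$) more carefully than the paper does.
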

\begin{proof}
Since $a\in B_{1}(0)$ and $r_{A}\leq 1$, we know from the monotonicity formula that
\[\sup_{y\in B_{r_{A}}(a)}\mathbf{\Theta}_{U_{\epsilon}}^{\epsilon}(2r_{a}, y)\leq \sup_{y\in B_{2}(0)}\mathbf{\Theta}_{U_{\epsilon}}^{\epsilon}(2, y)\leq M.\]
Since the collections of stop/ bad $r_{i}/5$-balls
\[\left\{B_{r_{i}/5}(b):b\in\mathop{\cup}\limits_{i=A}^{\infty}\mathcal{B}_{i}\right\}\bigcup \left\{B_{r_{i}/5}(d):d\in\mathop{\cup}\limits_{i=A}^{\infty}\mathcal{D}_{i}\right\}\]
are  pairwise disjoint and centered in $\mathcal{V}^{\epsilon}$, we can apply Lemma \ref{key packing} at scale $ B_{r_{A}}(a)$ to obtain (A) and (B).

 Conclusion (C) follows from an elementary induction argument. Indeed, for each $i\geq A$, we claim that
\begin{equation}\label{thegoodtree1.1}
\mathcal{V}^{\epsilon}\cap B_{r_{A}}(a)\subset \big(\mathop{\cup}\limits_{g\in\mathcal{G}_{i}}B_{r_{i}}(g)\big)\bigcup\big(\mathop{\cup}\limits_{l=A}^{i}\mathop{\cup}\limits_{y\in\mathcal{B}_{l}\cup\mathcal{D}_{l}}B_{r_{l}}(y)\big).
\end{equation}
When $i=A$, \eqref{thegoodtree1.1} trivially holds. Assume that \eqref{thegoodtree1.1} holds for $i-1$.  By the construction of good tree, we have
\begin{equation}\label{thegoodtree1.2}
\Big(\mathcal{V}^{\epsilon}\cap B_{r_{A}} (a)\cap \big(\mathop{\cup}\limits_{g\in\mathcal{G}_{i-1}}B_{r_{i-1}}(g)\big)\Big)\backslash\big(\mathop{\cup}\limits_{l=A}^{i-1}\mathop{\cup}\limits_{b\in\mathcal{B}_{l}}B_{r_{l}}(b)\big)\subset \big(\mathop{\cup}\limits_{y\in \mathcal{B}_{i}\cup \mathcal{G}_{i}\cup \mathcal{D}_{i}} B_{r_{i}}(y)\big).
\end{equation}
Hence
\[\begin{aligned}
\mathcal{V}^{\epsilon}\cap B_{r_{A}}(a)\subset& \Big(\mathcal{V}^{\epsilon}\cap B_{r_{A}}(a)\cap\big(\mathop{\cup}\limits_{g\in\mathcal{G}_{i-1}}B_{r_{i-1}}(g)\big)\Big)\bigcup\big(\mathop{\cup}\limits_{l=A}^{i-1}\mathop{\cup}\limits_{y\in\mathcal{B}_{l}\cup\mathcal{D}_{l}}B_{r_{l}}(y)\big)\\
\subset&\big(\mathop{\cup}\limits_{l=A}^{i-1}\mathop{\cup}\limits_{y\in\mathcal{B}_{l}\cup\mathcal{D}_{l}}B_{r_{l}}(y)\big)\mathop{\bigcup}\big(\mathop{\cup}\limits_{y\in \mathcal{G}_{i}\cup\mathcal{B}_{i}\cup\mathcal{D}_{i}}B_{r_{i}}(y)\big)\\
\subset&\big(\mathop{\cup}\limits_{g\in\mathcal{G}_{i}}B_{r_{i}}(g)\big)\bigcup\big(\mathop{\cup}\limits_{l=A}^{i}\mathop{\cup}\limits_{y\in\mathcal{B}_{l}\cup\mathcal{D}_{l}}B_{r_{l}}(y)\big).
\end{aligned}\]
This proves \eqref{thegoodtree1.1} at stage $i$. When $r_{i}\leq r$, we have $\mathcal{G}_{i}=\mathcal{B}_{i}=\emptyset$, so \eqref{thegoodtree1.1} implies
\[\mathcal{V}^{\epsilon}\cap B_{r_{A}}(a)\subset \mathop{\cup}\limits_{l=A}^{i}\mathop{\cup}\limits_{y\in\mathcal{B}_{l}\cup\mathcal{D}_{l}}B_{r_{l}}(y).\]
This is exactly (C).

Notice that $\mathcal{D}_{i}\neq \emptyset$ only if $r_{i}\leq r$, in which case $r_{i-1}>r$. Then conclusion (D) follows immediately from the definition of $r_{i}$.
\end{proof}

\subsection{Bad tree construction}
Suppose $B_{r_{A}}(a)$ is a bad ball at scale $A\geq 0$, where $r_{A}>r$ and $a\in B_{1}(0)$. We define the bad tree rooted at $B_{r_{A}}(a)$. This is almost the same with the construction of good trees.

We inductively define, for each $i\geq A$,  a family of good balls $\{B_{r_{i}}(g)\}_{g\in\mathcal{G}_{i}}$, bad balls $\{B_{r_{i}}(b)\}_{b\in\mathcal{B}_{i}}$, and stop balls $\{B_{r_{i}}(d)\}_{d\in\mathcal{D}_{i}}$.

 For $i=A$, we let $\mathcal{B}_{A}=\{a\}$ and $\mathcal{G}_{A}=\mathcal{S}_{A}=\emptyset$.

 Since $B_{r_{A}}(a)$ is a bad ball, we have a $(n-2)-$dimensional affine plane $L_{a}^{n-2}$ such that
\[\big\{y:\mathbf{\Theta}_{U_{\epsilon}}^{\epsilon}(2\eta r_{A}, y)\geq M-\frac{\eta}{2}\big\}\cap B_{2r_{A}}(a)\subset\mathcal{J}_{\rho r_{A}}(L_{a}^{n-2}).\]

If $r_{A+1}\leq r$, then take $\mathcal{G}_{A+1}=\mathcal{B}_{A+1}=\emptyset$, and $\mathcal{D}_{A+1}$ to be a maximal $2\eta r_{A}/5$-net in
$\mathcal{V}^{\epsilon}\cap B_{r_{A}}(a).$
 That is to say, we stop the construction of the tree.
 Remember that $\eta\ll\rho$ according to Lemma \ref{lem Allen-Cahn Dichotomy2}.

 If $r_{A+1}>r$, we define $\mathcal{D}_{A+1}$ to be a maximal $2\eta r_{A}/5$-net in
\[\big(\mathcal{V}^{\epsilon}\cap B_{r_{A}}(a)\big)\backslash \mathcal{J}_{2\rho r_{A}}(L_{a}^{n-2})\]
and we let $\{g:g\in\mathcal{G}_{A+1}\}\cup\{b:b\in\mathcal{B}_{A+1}\}$ be a maximal $2r_{A+1}/5$-net in
\[\big(\mathcal{V}^{\epsilon}\cap B_{r_{A}}(a)\big)\cap \mathcal{J}_{2\rho r_{A}}(L_{a}^{n-2}).\]

Suppose we have constructed the good/bad/stop balls down through scale $i-1$. For each $b\in \mathcal{B}_{i-1}$, we have a $(n-2)$-dimensional affine plane $L_{b}^{n-2}$ such that
\[\big\{y:\mathbf{\Theta}_{U_{\epsilon}}^{\epsilon}(2\eta r_{i-1}, y)\geq M-\frac{\eta}{2}\big\}\cap B_{2r_{i-1}}(b)\subset\mathcal{J}_{\rho r_{i-1}}(L_{b}^{n-2}).\]

If $r_{i}\leq r$, then take $\mathcal{G}_{i}=\mathcal{B}_{i}=\emptyset$, and $\mathcal{D}_{i}$ to be a maximal $2\eta r_{i-1}/5$-net in
\[\mathcal{V}^{\epsilon}\cap B_{r_{A}}(a)\cap \big(\mathop{\cup}\limits_{b\in \mathcal{B}_{i-1}}B_{r_{i}-1}(b)\big).\]

 If $r_{i}>r$, we define $\mathcal{D}_{i}$ to be a maximal $2\eta r_{i-1}/5$-net in
\[\mathcal{V}^{\epsilon}\cap B_{r_{A}}(a)\cap\Big(\bigcup\limits_{b\in\mathcal{B}_{i-1}}\big(B_{r_{i-1}}(b)\backslash\mathcal{J}_{2\rho r_{i-1}}(L_{b}^{n-2})\big)\Big)\]
and we let $\{g:g\in\mathcal{G}_{i}\}\cup\{b:b\in\mathcal{B}_{i}\}$ be a maximal $2r_{i}/5$-net in
\[\mathcal{V}^{\epsilon}\cap B_{r_{A}}(a)\cap\Big(\bigcup\limits_{b\in\mathcal{B}_{i-1}}\big(B_{r_{i-1}}(b)\cap\mathcal{J}_{2\rho r_{i-1}}(L_{b}^{n-2})\big)\Big).\]
This completes the construction of the bad tree.
\begin{defi}\label{defbadtree}
The construction above is called the bad tree rooted at $B_{r_{A}}(a)$, and may be written as $\mathcal{T}_{\mathcal{B}}(B_{r_{A}}(a))$. Given  a bad tree, we define the tree leaves
\[\mathcal{F}(\mathcal{T}_{\mathcal{B}}(B_{r_{A}}(a))):=\{\text{the collections of good ball centers}\}\] and
\[\mathcal{D}(\mathcal{T}_{\mathcal{B}}(B_{r_{A}}(a))):=\{\text{the collection of stop ball centers}\}.\]
We write $r_{f}, r_{d}$ for the associated radius function respectively.
\end{defi}
\begin{rmk} It is easy to see that if $d\in\mathcal{D}_{i}\subset \mathcal{D}(\mathcal{T}_{\mathcal{B}}(B_{r_{A}}(a)))$, then $r_{d}=\eta r_{i-1}$.
\end{rmk}
 \begin{thm}\label{thebadtree}
Let $\mathcal{T}_{\mathcal{B}}(B_{r_{A}}(a))$ be a bad tree. We have

(A) Tree-leaf packing:
\[\sum\limits_{f\in \mathcal{F}(\mathcal{T}_{\mathcal{B}}(B_{r_{A}}(a)))}r_{f}^{n-1}\leq 2c_{2}(n)\rho r_{A}^{n-1}.\]

(B) Stop ball packing:
\[\sum\limits_{d\in \mathcal{D}(\mathcal{T}_{\mathcal{B}}(B_{r_{A}}(a)))}r_{d}^{n-1}\leq c(n, \eta)r_{A}^{n-1}.\]

(C) Covering control:
\[\mathcal{V}^{\epsilon}\cap B_{r_{A}}(a)\subset\big(\mathop{\cup}\limits_{d\in \mathcal{D}(\mathcal{T}_{\mathcal{B}}(B_{r_{A}}(a)))}B_{r_{d}}(d)\big)\bigcup\big(\mathop{\cup}\limits_{f\in \mathcal{F}(\mathcal{T}_{\mathcal{B}}(B_{r_{A}}(a)))}B_{r_{f}}(f)\big).\]

(D) Stop ball structure: for any $d\in \mathcal{D}(\mathcal{T}_{\mathcal{B}}(B_{r_{A}}(a)))$, we have $\eta r< r_{d}<r$ or
\[\sup_{y\in B_{2r_{d}}(d)}\mathbf{\Theta}_{U_{\epsilon}}^{\epsilon}(2r_{d}, y)\leq M-\frac{\eta}{2}.\]
\end{thm}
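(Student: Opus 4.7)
The plan is to establish all four conclusions simultaneously by a downward induction on the tree depth, exploiting the defining feature of a bad ball: by Remark~\ref{new upperbound}, at any $b \in \mathcal{B}_{i-1}$ the high-density subset of $B_{2r_{i-1}}(b)$ is trapped in a tubular neighborhood $\mathcal{J}_{\rho r_{i-1}}(L_b^{n-2})$ of a codimension-two affine plane. The crucial quantitative input will be the contraction inequality
\[
G_i + B_i \leq c_2(n)\,\rho\, B_{i-1}, \qquad B_i := \sum_{b\in\mathcal{B}_i} r_i^{n-1}, \quad G_i := \sum_{g\in\mathcal{G}_i} r_i^{n-1},
\]
from which both (A) and (B) will follow by summing a geometric series.

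The contraction is a pure volume count. At each $b\in\mathcal{B}_{i-1}$, every new center in $\mathcal{G}_i\cup\mathcal{B}_i$ lies inside $\mathcal{J}_{2\rho r_{i-1}}(L_b)\cap B_{r_{i-1}}(b)$, a tube of volume $\lesssim \rho^{2}r_{i-1}^{n}$; since their $2r_i/5$-ball neighborhoods are pairwise disjoint and fit in a slight thickening of this tube, their number is at most $c(n)\rho^{2-n}$, and each contributes $r_i^{n-1}=\rho^{n-1}r_{i-1}^{n-1}$, yielding the factor of $\rho$. Choosing $\rho$ so small that $c_2(n)\rho\leq 1/2$, one obtains $B_i\leq 2^{-(i-A)}r_A^{n-1}$, hence $\sum_{f\in\mathcal{F}}r_f^{n-1}=\sum_{i>A}G_i\leq c_2(n)\rho\sum_{i>A}B_{i-1}\leq 2c_2(n)\rho\,r_A^{n-1}$, proving (A). For (B), a disjointness argument inside $B_{r_{i-1}}(b)$ bounds the number of stop balls at scale $i$ from a single parent by $c(n)\eta^{-n}$, so $\sum_{d\in\mathcal{D}_i}r_d^{n-1}\leq c(n)\eta^{-1}B_{i-1}$, and summing the same geometric series gives $\sum_d r_d^{n-1}\leq c(n,\eta)r_A^{n-1}$.

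The covering assertion (C) proceeds by induction on $i$ identical in spirit to Theorem~\ref{thegoodtree}(C): one shows that at each stage $\mathcal{V}^\epsilon\cap B_{r_A}(a)\subset \bigl(\bigcup_{y\in\mathcal{G}_i\cup\mathcal{B}_i}B_{r_i}(y)\bigr)\cup\bigl(\bigcup_{\ell\leq i}\bigcup_{d\in\mathcal{D}_\ell}B_{r_d}(d)\bigr)$, splitting each $B_{r_{i-1}}(b)\cap\mathcal{V}^\epsilon$ along the tube $\mathcal{J}_{2\rho r_{i-1}}(L_b)$: the in-tube part is captured by the new good/bad balls by maximality of the $2r_i/5$-net, while the out-of-tube part is captured by the new stop balls by maximality of the $2\eta r_{i-1}/5$-net; termination at the scale $r_i\leq r$ absorbs everything remaining into $\mathcal{D}_i$.

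For (D), I would separate interior stop balls from terminal ones. Interior stop balls, created while $r_i>r$, sit in $B_{r_{i-1}}(b)\setminus\mathcal{J}_{2\rho r_{i-1}}(L_b)$ for some parent $b\in\mathcal{B}_{i-1}$; since $\eta\ll\rho$ by Lemma~\ref{lem Allen-Cahn Dichotomy2}, the enlarged ball $B_{2r_d}(d)=B_{2\eta r_{i-1}}(d)$ still lies outside $\mathcal{J}_{\rho r_{i-1}}(L_b)$, and Remark~\ref{new upperbound} forces $\mathbf{\Theta}_{U_\epsilon}^\epsilon(2r_d,y)<M-\eta/2$ for every $y\in B_{2r_d}(d)$. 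Terminal stop balls, created at the scale where $\rho r_{i-1}\leq r<r_{i-1}$, have $r_d=\eta r_{i-1}$, so $r_d>\eta r$ and $r_d\leq(\eta/\rho)r\leq r$ by $\eta\leq\rho$. The main obstacle is the joint calibration of the small parameters: $\rho$ must be small enough to close the geometric series in (A), while $\eta$ must be much smaller than $\rho$ so that the tube-avoidance estimate upgrades from a pointwise to a $B_{2r_d}(d)$-wise density drop in (D). This chain $\eta\ll\rho\ll 1/c_2(n)$, already forced by the dichotomy lemmas, is precisely what makes the volumetric contraction above effective.
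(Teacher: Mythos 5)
Your argument is correct and follows the same route as the paper's own proof: a volumetric tube count inside $\mathcal{J}_{2\rho r_{i-1}}(L_b^{n-2})\cap B_{r_{i-1}}(b)$ gives the $c_2(n)\rho$ contraction for (A), a disjointness count bounds the stop balls per parent by $c(n)\eta^{-n}$ for (B), (C) is the same induction as Theorem~\ref{thegoodtree}(C), and (D) splits into interior stop balls (where $B_{2r_d}(d)$ avoids $\mathcal{J}_{\rho r_{i-1}}(L_b)$ and Remark~\ref{new upperbound} applies) and terminal ones (where $\rho r_{i-1}\leq r<r_{i-1}$ pins $r_d=\eta r_{i-1}$ in $(\eta r,r)$). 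If anything, your bookkeeping with the exponent $n-1$ throughout the contraction inequality is a touch cleaner than the paper's intermediate use of $r_i^{n-2}$, but the content is identical.
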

\begin{proof}
Take $r_{i}>r$ and $b\in\mathcal{B}_{i-1}$. The good/ bad ball centers
\[(\mathcal{G}_{i}\cup\mathcal{B}_{i})\cap B_{r_{i-1}}(b)\subset \mathcal{J}_{2\rho r_{i-1}}(L^{n-2}_{b})\]
and the $r_{i}/5$-balls are disjoint. Therefore,
\[\#\{(\mathcal{G}_{i}\cup\mathcal{B}_{i})\cap B_{r_{i-1}}(b)\}\leq \frac{\omega_{n-2}\omega_{2}(3\rho)^{2}}{\omega_{n}(\rho/5)^{n}}\leq c_{2}(n)\rho^{2-n},\]
where $\#\{(\mathcal{G}_{i}\cup\mathcal{B}_{i})\cap B_{r_{i-1}}(b)\}$ is the number of points in $(\mathcal{G}_{i}\cup\mathcal{B}_{i})\cap B_{r_{i-1}}(b)$. We deduce
\[\#\{\mathcal{G}_{i}\cup\mathcal{B}_{i}\}r_{i}^{n-2}\leq c_{2}\rho\#\{\mathcal{B}_{i-1}\}r_{i-1}^{n-2}\leq c_{2}\rho \#\{\mathcal{G}_{i-1}\cup\mathcal{B}_{i-1}\}r_{i-1}^{n-2}\leq\cdots\leq(c_{2}\rho)^{i-A}r_{A}^{n-2}.\]
Up to now, the constant $\rho$ has not been determined. At this stage, we fix a positive constant $\rho$ such that  $c_{2}\rho<1/2$. Then
\begin{equation}\label{good leaves packing}
\sum\limits_{i=A+1}^{\infty}\#\{\mathcal{G}_{i}\cup\mathcal{B}_{i}\}r_{i}^{n-2}\leq\sum\limits_{i=A+1}^{\infty}(c_{2}\rho)^{i-A}r_{A}^{n-2}\leq 2c_{2}\rho r_{A}^{n-2}.
\end{equation}
For each $f\in\mathcal{F}(\mathcal{T}_{\mathcal{B}}(B_{r_{A}}(a)))$, we have $r<f_{f}\leq r_{A+1}$. Thus (A) follows from \eqref{good leaves packing}.

Given $i\geq A+1$, the  balls $\{B_{\frac{r_{d}}{5}}(d)\}_{d\in\mathcal{D}_{i}}$ are disjoint.  Since each $d$ belongs to a bad ball, this implies that
\[\#\{\mathcal{D}_{i}\}\leq\frac{10^{n}}{\eta^{n}}\#\{\mathcal{B}_{i-1}\}.\]
Since there is no stop ball at scale $r_{A}$, we deduce
\[\sum\limits_{i=A+1}^{\infty}\#\{\mathcal{D}_{i}\}(\eta r_{i-1})^{n-1}\leq 10^{n}\eta^{-1}\sum\limits_{i=A}^{\infty}\#\{B_{i}\}r_{i}^{n-1}\leq c(n, \eta)r_{A}^{n-1}.\]
This proves estimate (B).

 For each $i\geq A$, we claim that
\begin{equation}\label{thebadtree1.1}
\mathcal{V}^{\epsilon}\cap B_{r_{A}}(a)\subset \big(\mathop{\cup}\limits_{b\in\mathcal{B}_{i}}B_{r_{i}}(b)\big)\cup\big(\mathop{\cup}\limits_{l=A}^{i}\mathop{\cup}\limits_{y\in \mathcal{G}_{l}\cup\mathcal{D}_{l}}B_{r_{y}}(y)\big).
\end{equation}
When $i=A$, \eqref {thebadtree1.1} holds trivially. Assume  \eqref{thebadtree1.1} holds for $i-1$. Then by construction,
\begin{equation}\label{thebadtree1.2}
\mathcal{V}^{\epsilon}\cap B_{r_{A}} (a)\cap \big(\mathop{\cup}\limits_{b\in\mathcal{B}_{i-1}}B_{r_{i-1}}(b)\big)\subset \big(\mathop{\cup}\limits_{y\in \mathcal{B}_{i}\cup \mathcal{G}_{i}\cup \mathcal{D}_{i}} B_{r_{y}}(y)\big).
\end{equation}
Thus
\[\begin{aligned}
\mathcal{V}^{\epsilon}\cap B_{r_{A}}(a)\subset& \Big(\mathcal{V}^{\epsilon}\cap B_{r_{A}}(a)\cap\big(\mathop{\cup}\limits_{b\in\mathcal{B}_{i-1}}B_{r_{i-1}}(b)\big)\Big)\bigcup\big(\mathop{\cup}\limits_{l=A}^{i-1}\mathop{\cup}\limits_{y\in\mathcal{G}_{l}\cup\mathcal{D}_{l}}B_{r_{y}}(y)\big)\\
\subset&\big(\mathop{\cup}\limits_{l=A}^{i-1}\mathop{\cup}\limits_{y\in\mathcal{G}_{l}\cup\mathcal{D}_{l}}B_{r_{y}}(y)\big)\mathop{\bigcup}\big(\mathop{\cup}\limits_{y\in \mathcal{G}_{i}\cup\mathcal{B}_{i}\cup\mathcal{D}_{i}}B_{r_{i}}(y)\big)\\
\subset&\big(\mathop{\cup}\limits_{b\in\mathcal{B}_{i}}B_{r_{i}}(b)\big)\bigcup\big(\mathop{\cup}\limits_{l=A}^{i}\mathop{\cup}\limits_{y\in\mathcal{G}_{l}\cup\mathcal{D}_{l}}B_{r_{y}}(y)\big).
\end{aligned}\]
This proves \eqref{thebadtree1.1} at stage $i$. When $r_{i}\leq r$, there is no  bad ball, so  (C) follows from \eqref{thebadtree1.1}.

Finally, we prove conclusion (D). Take a stop ball center $d\in\mathcal{D}_{i}$. If $r_{d}>r$, then necessarily $d\in B_{r_{i-1}}(b)\backslash B_{2\rho r_{i-1}}(L_{b}^{n-2})$ for some bad center $b\in \mathcal{B}_{i-1}$. By  Definition \ref{defgoodball} and the choice that $\eta<\rho/2$, we have
\[\sup\limits_{y\in B_{2r_{d}}(d)}\mathbf{\Theta}_{U_{\epsilon}}^{\epsilon}(2r_{d},y)\leq \sup\limits_{y\in B_{\rho r_{i-1}}(d)}\mathbf{\Theta}_{U_{\epsilon}}^{\epsilon}(2\eta r_{i-1}, y)\leq M-\frac{\eta}{2}.\]
Conversely, the only way $r_{i}\leq r$ occur is that $r_{i-1}> r$. In this case, we have
\[r\geq  \rho r_{i-1}> \eta r_{i-1}=r_{d}> \eta r. \qedhere\]
\end{proof}
\subsection{Alternating the trees}
In this subsection, we fix a positive constant $\rho$ such that
\[c_{1}(n)c_{2}(n)\rho\leq \frac{1}{4},\]
where $c_{1}(n)$ is the positive constant in Theorem \ref{thegoodtree} and  $c_{2}(n)$ is the positive constant in Theorem \ref{thebadtree}.

We inductively define for each $i=0, 1,2,\cdots$ a family of tree leaves $\{B_{r_{f}}(f)\}_{f\in\mathcal{F}_{i}}$ and stop balls $\{B_{r_{d}}(d)\}_{d\in D_{i}}$. Here $r_{f}, r_{d}$ are the associated radius functions.

Let $\mathcal{F}_{0}=\{0\}$, and define the associated radius function $r_{f=0}=1$, so the ball $B_{1}(0)$ is the only leaf at stage $0$. We let $\mathcal{D}_{0}=\emptyset$, so there are no stop balls at stage $0$. Trivially, the leaf $\mathcal{F}_{0}$ is either a good ball or a bad ball.

Suppose we have defined the  leaves and stop balls up to stage $i-1$. The leaves in $\mathcal{F}_{i-1}$ are all good ball centers or bad ball centers.  If they are good, let us define for each $f\in\mathcal{F}_{i-1}$ a good tree $\mathcal{T}_{\mathcal{G}, f}=\mathcal{T}_{\mathcal{G}}(B_{r_{f}}(f))$, with parameters $\rho$ and $\eta$ defined above. Then we set
\[\mathcal{F}_{i}=\mathop{\cup}\limits_{f\in\mathcal{F}_{i-1}}\mathcal{F}(\mathcal{T}_{\mathcal{G}, f})\]
and
\[\mathcal{D}_{i}=\mathcal{D}_{i-1}\cup\mathop{\cup} \limits_{f\in\mathcal{D}_{i-1}}\mathcal{D}(\mathcal{T}_{\mathcal{G}, f}).\]
By the construction, the leaves $\mathcal{F}_{i}$ are bad ball centers.

On the other hand, if all the leaves $\mathcal{F}_{i-1}$ are bad ball centers, let us define for each $f\in\mathcal{F}_{i-1}$ a bad tree $\mathcal{T}_{\mathcal{B}, f}=\mathcal{T}_{\mathcal{B}}(B_{r_{f}}(f))$. Then we set
\[\mathcal{F}_{i}=\mathop{\cup}\limits_{f\in\mathcal{F}_{i-1}}\mathcal{F}(\mathcal{T}_{\mathcal{B}, f})\]
and
\[\mathcal{D}_{i}=\mathcal{D}_{i-1}\cup\mathop{\cup} \limits_{f\in\mathcal{D}_{i-1}}\mathcal{D}(\mathcal{T}_{\mathcal{B}, f}).\]
In this case, all the leaves $\mathcal{F}_{i}$ are good ball centers.
 \begin{thm}\label{thealttree}
There is some integer $N$ such that $\mathcal{F}_{N}=\emptyset$, and we have:

(A) Tree-leaf packing:
\[\sum\limits_{i=0}^{N-1}\sum\limits_{f\in \mathcal{F}_{i}}r_{f}^{n-1}\leq c(n).\]

(B) Stop ball packing:
\[\sum\limits_{d\in \mathcal{D}_{N}}r_{d}^{n-1}\leq c(n, s, \tau, \Lambda_{0}, \Lambda_{1}, W).\]

(C) Covering control:
\[\mathcal{V}^{\epsilon}\subset\mathop{\cup}\limits_{d\in \mathcal{D}_{N}} B_{r_{d}}(d).\]

(D) Stop ball structure: for any $d\in \mathcal{D}_{N}$, we have $\eta r\leq r_{d}\leq r$ or
\[\sup_{y\in B_{2r_{d}}(d)}\mathbf{\Theta}_{U_{\epsilon}}^{\epsilon}(2r_{d},y)\leq M-\frac{\eta}{2}.\]
\end{thm}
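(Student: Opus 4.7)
The plan is to iterate the single-tree packing and covering results (Theorems \ref{thegoodtree} and \ref{thebadtree}) across stages and exploit the forced alternation between good-tree and bad-tree stages to obtain a geometric decay of the tree-leaf packing measure. Termination is the first order of business: by construction every $f\in \mathcal{F}(\mathcal{T}_{\mathcal{G},f'})\cup\mathcal{F}(\mathcal{T}_{\mathcal{B},f'})$ rooted at a parent of radius $r_{f'}$ satisfies $r<r_f\le\rho\,r_{f'}$ (otherwise the construction would have created a stop ball instead of a tree leaf), so inductively every $f\in\mathcal{F}_i$ has $r<r_f\le \rho^i$. Choosing $N$ to be the smallest integer with $\rho^N\le r$ forces $\mathcal{F}_N=\emptyset$.

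For conclusion (A), set $P_i:=\sum_{f\in\mathcal{F}_i}r_f^{n-1}$. Theorem \ref{thegoodtree}(A) gives $P_i\le c_1(n)P_{i-1}$ at a good step, while Theorem \ref{thebadtree}(A) gives $P_i\le 2c_2(n)\rho\,P_{i-1}$ at a bad step. Since the two types alternate by construction and $\rho$ has been fixed so that $c_1(n)c_2(n)\rho\le 1/4$, any pair of consecutive steps contracts $P$ by at least a factor $1/2$; in particular $P_{2k}\le 2^{-k}$, whence $\sum_{i=0}^{N-1}P_i$ is bounded by a convergent geometric series depending only on $n$. For conclusion (B), each $d\in\mathcal{D}_N$ is a stop ball of exactly one tree rooted at some $f\in\mathcal{F}_i$, and the stop-ball packing of that tree is bounded by $c(n,\eta)\,r_f^{n-1}$ thanks to Theorem \ref{thegoodtree}(B) and Theorem \ref{thebadtree}(B). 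Summing over all such $f$ and invoking (A) gives
\begin{equation*}
\sum_{d\in\mathcal{D}_N}r_d^{n-1}\ \le\ c(n,\eta)\sum_{i=0}^{N-1}P_i\ \le\ c(n,s,\tau,\Lambda_0,\Lambda_1,W).
\end{equation*}

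Conclusion (C) follows from iterating the single-tree covering statements: at each stage $i$, $\mathcal{V}^\epsilon$ is contained in $\bigcup_{d\in\mathcal{D}_i}B_{r_d}(d)\cup\bigcup_{f\in\mathcal{F}_i}B_{r_f}(f)$, so once $\mathcal{F}_N=\emptyset$ only the stop balls remain. Conclusion (D) is inherited verbatim from Theorem \ref{thegoodtree}(D) and Theorem \ref{thebadtree}(D), using $\eta<\rho$ to ensure $\eta r\le r_d$ in the good-tree case. The main obstacle, already absorbed in the setup of the present subsection, is securing contraction in the alternation: a pure good-tree iteration would only yield the uniform factor $c_1(n)$ per step, which can blow up; the bad tree supplies the indispensable factor $\rho$ coming from the $(n-2)$-dimensional concentration of high-density points provided by Theorem \ref{thm Allen-Cahn Dichotomy}(ii), and pairing with a good step turns this into the required geometric decay. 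Everything else is routine bookkeeping.
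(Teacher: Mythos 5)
Your proof is correct and follows essentially the same route as the paper: termination via the radius decay $r_f\le\rho^i$, the alternating contraction $c_1(n)\cdot 2c_2(n)\rho\le 1/2$ for the tree-leaf packing, the stop-ball packing by summing per-tree bounds against (A), and the covering and stop-ball structure inherited from Theorems \ref{thegoodtree} and \ref{thebadtree}.
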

\begin{proof}
From the construction, in any given good tree or bad tree, we have
\[\max\limits_{f\in\mathcal{F}_{i}}r_{f}\leq \rho\max\limits_{f\in\mathcal{F}_{i-1}}r_{f}\leq \rho^{i}.\]
For $i$ sufficiently large, we would have $\rho^{i}<r$. It follows easily that $\mathcal{F}_{N}=\emptyset$ provided  $N$ is large enough.

Let us prove (A). Suppose $\mathcal{F}_{i}$ are good ball centers. The collection of centers $\{f\in\mathcal{F}_{i}\}$ are precisely the leaves of bad trees rooted at $\{f'\in\mathcal{F}_{i-1}\}$. Therefore, using Theorem \ref{thebadtree}, we have
\[\sum\limits_{f\in\mathcal{F}_{i}}r_{f}^{n-1}\leq 2c_{2}\rho \sum\limits_{f\in\mathcal{F}_{i-1}}r_{f}^{n-1}.\]
Conversely, if  $\mathcal{F}_{i}$ are bad ball centers, then the points in $\mathcal{F}_{i}$ are all leaves of good trees rooted at $\mathcal{F}_{i-1}$. By Theorem \ref{thegoodtree},
\[\sum\limits_{f\in\mathcal{F}_{i}}r_{f}^{n-1}\leq c_{1} \sum\limits_{f\in\mathcal{F}_{i-1}}r_{f}^{n-1}.\]
 We deduce that
\[\sum\limits_{f\in\mathcal{F}_{i}}r_{f}^{n-1}\leq c(n)(2c_{1}c_{2}\rho)^{\frac{i}{2}}\leq c(n)2^{-\frac{i}{2}}.\]
The packing estimate (A) follows directly.

We prove (B). Each stop ball $d\in D_{N}$ arises from  a good or bad tree rooted in some $f\in\mathcal{F}_{i}$, for some $i<N$. Using Theorem \ref{thegoodtree}, Theorem \ref{thebadtree} and (A), we have
\[\sum\limits_{d\in\mathcal{D}_{N}}r_{d}^{n-1}\leq c(n, s, \tau, \Lambda_{0}, \Lambda_{1}, W)\sum\limits_{i=0}^{N}\sum\limits_{f\in\mathcal{F}_{i}}r_{f}^{n-1}\leq c(n, s, \tau, \Lambda_{0}, \Lambda_{1}, W).\]
Applying Theorem \ref{thegoodtree} and Theorem \ref{thebadtree} to each tree constructed at $f\in\mathcal{F}_{i-1}$, we get
\[\mathop{\cup}\limits_{f\in\mathcal{F}_{i-1}}\big(\mathcal{V}^{\epsilon}\cap B_{r_{f}}(f)\big)\subset\mathop{\cup}\limits_{f\in\mathcal{F}_{i}}\big(\mathcal{V}^{\epsilon}\cap B_{r_{f}}(f)\big)\cup\mathop{\cup}_{d\in\mathcal{D}_{i}}B_{r_{d}}(d).\]
Since $\mathcal{V}^{\epsilon}\subset B_{1}(0)$, it follows by induction that
\[\mathcal{V}^{\epsilon}\subset\big(\mathop{\cup}\limits_{f\in\mathcal{F}_{i}}B_{r_{f}}(f)\big)\cup \big(\mathop{\cup}\limits_{d\in\mathcal{D}_{i}}B_{r_{d}}(d)\big)\]
for each $i$. Therefore, we can obtain (C) by setting $i=N$.

Finally, (D) is a  direct consequence of Theorem \ref{thegoodtree} and Theorem \ref{thebadtree}.
\end{proof}

With the help of the above construction, now we can prove Theorem \ref{the4.1}.
\begin{proof}[Proof of Theorem \ref{the4.1}]
 Let $\mathcal{U}=\mathcal{D}_{N}$, and given $x=d\in\mathcal{U}$, define the radius function
 \[r_{x}=\max\{r, r_{d}\}.\]
 By Theorem \ref{thealttree} (C), it is easy to see that
 \[\mathcal{V}^{\epsilon}\subset\mathop{\cup}_{x\in\mathcal{U}}B_{r_{x}}(x).\]
 By Theorem \ref{thealttree} (B), (D) and the definition of $r_{x}$, we have
 \[\sum_{x\in\mathcal{U}}r_{x}^{n-1}=\sum_{\{d\in\mathcal{U}:r_{d}\leq r\}}r^{n-1}+\sum_{\{d\in\mathcal{U}:r_{d}> r\}}r_{d}^{n-1}\leq c(n, s, \tau,\Lambda_{0}, \Lambda_{1}).\]
 Using Theorem \ref{thealttree} once again, we know that if $x\in\mathcal{U}$ and $r_{x}>r$, then
 \[\sup_{y\in B_{2r_{x}}(x)}\mathbf{\Theta}_{U_{\epsilon}}^{\epsilon}(2r_{x},y)\leq M-\frac{\eta}{2}.\]
 Therefore,  $\{B_{r_{x}}(x)\}_{x\in \mathcal{U}}$ is a cover of $\mathcal{V}^{\epsilon}$ satisfying Theorem \ref{the4.1}.
\end{proof}

\section{The proof of of Theorem \ref{themain1}}
\setcounter{equation}{0}
In this section, we use Theorem \ref{the4.1} to prove Theorem \ref{themain1}.
\begin{proof}[Proof of Theorem \ref{themain1}]
Fix two constants
$\eta, \mathbf{k}$  so that they satisfy Theorem \ref{the4.1}.

If $r\geq \max\{10^{-[(10^{n}\Lambda_{1})/\eta]-1}, \mathbf{k}\epsilon\}$,
where $[(10^{n}\Lambda_{1})/\eta]$ is the integer part of $(10^{n}\Lambda_{1})/\eta$, then we can always find a cover of $B_{1}(0)$ by balls of radius $r$. By choosing a subcovering if necessary, we can make the number of the balls to be controlled by a universal constant. In this case,  Theorem \ref{themain1} holds trivially.

If $r\in (\mathbf{k}\epsilon, 10^{-[(10^{n}\Lambda_{1})/\eta]-1})$, we can use Theorem \ref{the4.1} to build a cover $\mathcal{U}_{1}$ of $\big\{|U_{\epsilon}(x, 0)|\leq 1-\tau\big\}\cap B_{1}(0)$. There are two cases we need to discuss separately.

\textbf{Case 1.} For each $x\in\mathcal{U}_{1}$, we have $r_{x}=r$.

The packing and covering estimates of Theorem \ref{the4.1} (A), (B) imply
\[\big\{|U_{\epsilon}(x, 0)|\leq 1-\tau\big\}\cap B_{1}(0)\subset \mathop{\cup}\limits_{x\in\mathcal{U}_{1}}B_{r}(x)\]
and
\[\#\{x\in \mathcal{U}_{1}\}r^{n-1}\leq c(n, s, \tau, \Lambda_{0}, \Lambda_{1},W).\]
Then
 \begin{equation}\label{prothemain1.1}
 \mathscr{L}^{n}\Big(\mathcal{J}_{r}\big(\{|U_{\epsilon}(x, 0)|\leq 1-\tau\}\cap B_{1}(0)\big)\Big)\leq\#\{x\in \mathcal{U}_{1}\}\mathscr{L}^{n}(B_{2r}(0))\leq cr.
 \end{equation}
 Hence the proof of Theorem \ref{themain1} is finished.

\textbf{Case 2.} There exists some $x\in\mathcal{U}_{1}$ such that  $r_{x}>r$.

We will use Theorem \ref{the4.1} inductively to build a (finite) sequence of refinements $\mathcal{U}_{1},\mathcal{U}_{2},\mathcal{U}_{3},\cdots$, which satisfy for each $i$ the following properties:

($\text{A}_{i}$)~Covering:
\begin{equation*}
\big\{|U_{\epsilon}(x, 0)|\leq 1-\tau\big\}\cap B_{1}(0)\subset\mathop{\cup}_{x\in\mathcal{U}_{i}}B_{r_{x}}(x).
\end{equation*}

($\text{B}_{i}$)~Packing
\[\sum\limits_{x\in\mathcal{U}_{i}}r_{x}^{n-1}\leq c(1+\sum\limits_{x\in\mathcal{U}_{i-1}}r_{x}^{n-1}).\]

($\text{C}_{i}$)~Energy drop: for every $x\in\mathcal{U}_{i}$ we have $r_{x}\geq r$, and either $r_{x}=r$ or
\[\sup_{y\in B_{2r_{x}}(x)}\mathbf{\Theta}_{U_{\epsilon}}^{\epsilon}(2r_{x}, y)\leq M-\frac{i\eta}{2}.\]

($\text{D}_{i}$)~Radius control: we have
\[\sup\limits_{x\in\mathcal{U}_{i}}r_{x}\leq 10^{-i}\quad\text{and}\quad \mathcal{U}_{i}\subset B_{1+10^{-i}}(0)\cap \big\{|U_{\epsilon}(x, 0)|\leq 1-\tau\big\}.\]

Notice that $\mathbf{\Theta}_{U_{\epsilon}}^{\epsilon}(\cdot, \cdot)$ is nonnegative. If $i\geq (2M/\eta)+1$, then each $x\in \mathcal{U}_{i}$ will necessarily satisfy
$r_{x}=r$. Arguing as in Case 1, we can obtain a bound like
\[\mathscr{L}^{n}\Big(\mathcal{J}_{r}\big(\{|U_{\epsilon}(x, 0)|\leq 1-\tau\}\cap B_{1}(0)\big)\Big)\leq cr.\]
Hence $\mathcal{U}_{i}$ is exactly the cover we need.

Therefore, it remains to build  a (finite) sequence of refinements $\mathcal{U}_{1},\mathcal{U}_{2},\mathcal{U}_{3},\cdots$
such that for each $i$, $\mathcal{U}_{i}$ satisfies ($\text{A}_{i}$), ($\text{B}_{i}$), ($\text{C}_{i}$),($\text{D}_{i}$).
We have already constructed $\mathcal{U}_{1}$ which satisfies ($\text{A}_{1}$), ($\text{B}_{1}$), ($\text{C}_{1}$), ($\text{D}_{1}$) from Theorem \ref{the4.1}. Suppose we have constructed $\mathcal{U}_{i-1}$ satisfying ($\text{A}_{i-1}$), ($\text{B}_{i-1}$), ($\text{C}_{i-1}$), ($\text{D}_{i-1}$). For each $x\in\mathcal{U}_{i-1}$ with $r_{x}>r$, we wish to apply Theorem \ref{the4.1} at scale $B_{r_{x}}(x)$ to obtain a new collection $\mathcal{U}_{i, x}$. Since $x\in B_{1+9^{-1}}(0)$ and $r_{x}<1/10$, we see that $U_{\epsilon}$ satisfies Theorem \ref{the4.1} at scale $B_{r_{x}}(x)$. However, from ($\text{C}_{i-1}$) we now have
\[\mathop{\sup}\limits_{y\in B_{2r_{x}}(x)} \mathbf{\Theta}_{U_{\epsilon}}^{\epsilon}(2r_{x}, y)\leq M-\frac{(i-1)\eta}{2}.\]
Therefore, Theorem \ref{the4.1} (C) for  $\mathcal{U}_{i, x}$ becomes
\begin{equation}\label{prothemain1.2}
\sup_{z\in B_{2r_{y}}(y)}\mathbf{\Theta}_{U_{\epsilon}}^{\epsilon}(2r_{y},z)\leq M-\frac{i\eta}{2}
\end{equation}
for each $y\in\mathcal{U}_{i, x}$ with $r_{x}>r$. Theorem \ref{the4.1} (A) yields
\begin{equation}\label{prothemain1.3}
\big\{|U_{\epsilon}(x,0)|\leq 1-\tau\big\}\cap B_{r_{x}}(x)\subset\mathop{\cup}_{y\in\mathcal{U}_{i, x}}B_{r_{y}}(y)
\end{equation}
and Theorem \ref{the4.1} (B) implies
\begin{equation}\label{prothemain1.4}
\sum\limits_{y\in\mathcal{U}_{i, x}}r_{y}^{n-1}\leq c(n, s, \tau, \Lambda_{0}, \Lambda_{1}, W)r_{x}^{n-1}.
\end{equation}
Moreover, from the construction of Theorem \ref{the4.1} and ($\text{D}_{i-1}$), we have
\begin{equation}\label{prothemain1.5}
\sup\limits_{y\in\mathcal{U}_{i, x}}r_{y}\leq 10^{-1}r_{x}\leq 10^{-i}.
\end{equation}
We then set
\[\mathcal{U}_{i}=\{x\in\mathcal{U}_{i-1}:r_{x}=r\}\cup\mathop{\bigcup}_{\{x\in\mathcal{U}_{i-1}:r_{x}>r\}}\mathcal{U}_{i, x}.\]
From \eqref{prothemain1.2}, \eqref{prothemain1.3}, \eqref{prothemain1.4} and \eqref{prothemain1.5}, the set $\mathcal{U}_{i}$ satisfies the hypotheses ($\text{A}_{i}$), ($\text{B}_{i}$), ($\text{C}_{i}$) and ($\text{D}_{i}$). This completes the construction of the covering  and finishes the proof of Theorem \ref{themain1}.
\end{proof}
\begin{proof}[Proof of Theorem \ref{the perimeter estimate}]
The proof of Theorem \ref{the perimeter estimate} relies on Lemma \ref{lem Monotonicityformula}, \cite[Theorem 6.7]{Millot-Sire-Wang2019} and Theorem \ref{theRec2}. Since the
strategy of proof Theorem \ref{the perimeter estimate} is the same with the proof of Theorem \ref{thm volume estimate}, we will omit the details.
\end{proof}
\section{Proof of Proposition \ref{pro potential estimate} }
In this section, we give the proof of Proposition \ref{pro potential estimate}.
\begin{proof}[Proof of Proposition \ref{pro potential estimate}]
Choose a structural constant $\mathbf{\delta}_{W}\in (0, 1/2]$ such that
\[W''(t)\geq \frac{1}{2}\min\{W''(1), W''(-1)\}>0,\quad\text{for}~\big|1-|t|\big|\leq \mathbf{\delta}_{W}.\]
There are two cases we need to consider separately.

\textbf{Case 1.} $\big|1-|u_{\epsilon}|\big|\leq \mathbf{\delta}_{W}$ on $B_{2}(0).$

By \cite[Lemma 4.11]{Millot-Sire-Wang2019}, we have
\[W(u_{\epsilon})\leq c(n, s,\tau, \Lambda_{0}, \Lambda_{1}, W)\epsilon^{4s},\quad\text{on}~B_{1}(0).\]
Then it follows easily that  Proposition \ref{pro potential estimate} holds.

\textbf{Case 2.} $\big\{\big|1-|u_{\epsilon}|\big|>\mathbf{\delta}_{W}\big\}\cap B_{2}(0)\neq\emptyset.$

Take two positive constants $\mathbf{k}=\mathbf{k}(n, s, \delta_{W},\Lambda_{0}, \Lambda_{1},W), c=c(n, s, \delta_{W},\Lambda_{0}, \Lambda_{1},W)$ such that Theorem \ref{thm volume estimate} holds for $\tau=1-\delta_{W}$. Note that if $\mathbf{k}\epsilon\geq 1$, then it is trivial that Proposition \ref{pro potential estimate} holds. Therefore, we need only to consider the case $\mathbf{k}\epsilon<1$.
We get from Theorem \ref{thm volume estimate}  that if $r\in (\mathbf{k}\epsilon, 2)$, then
\begin{equation}\label{volume estimate on B2}
\mathscr{L}^{n}\Big(\mathcal{J}_{r}\big(\{|u_{\epsilon}|<1-\mathbf{\delta}_{W}\}\cap B_{2}(0)\big)\Big)\leq cr.
\end{equation}

Fix a positive constant $\nu<1$. For each $\epsilon$ such that $\mathbf{k}\epsilon<1$, choose an integer $p_{1}(\epsilon)$ such that $\nu^{p_{1}(\epsilon)-1}\in [\mathbf{k}\epsilon, \nu^{-1}\mathbf{k}\epsilon]$. Then
\[
\begin{aligned}
\int_{B_{1}(0)}W(u_{\epsilon})dx\leq &\int_{\mathcal{J}_{\mathbf{k}\epsilon}(\{|u_{\epsilon}|<1-\mathbf{\delta}_{W}\}\cap B_{1}(0))}W(u_{\epsilon})dx\\
&+\sum_{j=0}^{p_{1}(\epsilon)-2}\int_{\mathcal{A}_{j}}W(u_{\epsilon})dx+\int_{B_{1}(0)\backslash \mathcal{J}_{1}(\{|u_{\epsilon}|<1-\mathbf{\delta}_{W}\})}W(u_{\epsilon})dx.
\end{aligned}\]
where
\[\mathcal{A}_{j}:=\mathcal{J}_{\tau^{j}}\big(\{|u_{\epsilon}|<1-\mathbf{\delta}_{W}\}\cap B_{1}(0)\big)\backslash \mathcal{J}_{\tau^{j+1}}\big(\{|u_{\epsilon}|<1-\mathbf{\delta}_{W}\}\cap B_{1}(0)\big).\]
For $j=0, 1, \cdots, p_{1}(\epsilon)-2$ , we know from \cite[Lemma 4.11]{Millot-Sire-Wang2019} that
\[W(u_{\epsilon})\leq c\epsilon^{4s}\nu^{-4(j+1)s},\quad\text{on}~\mathcal{A}_{j},\]
where $c=c(n, s, \delta_{W},\Lambda_{0}, \Lambda_{1},W)$. Plugging this into \eqref{volume estimate on B2}, we obtain
\[\int_{B_{1}(0)}W(u_{\epsilon})dx\leq c\big(\epsilon+\sum_{j=0}^{p_{1}(\epsilon)-2}\epsilon^{4s}\nu^{(j+1)(1-4s)}+\epsilon^{4s}\big).\]
\begin{itemize}
\item If $s<1/4$, then
\[\sum_{j=0}^{p_{1}(\epsilon)-2}\nu^{(j+1)(1-4s)}<\infty.\]
Hence $\int_{B_{1}(0)}W(u_{\epsilon})dx\leq c\epsilon^{4s}.$
\item If $s=1/4$, we know from the definition of $p_{1}(\epsilon)$ that
\[p_{1}(\epsilon)\leq c|\log\epsilon|,\]
then \[\int_{B_{1}(0)}W(u_{\epsilon})dx\leq c\epsilon|\log\epsilon|.\]
\item If $s>1/4$, we have
\[\sum_{j=0}^{p_{1}(\epsilon)-2}\nu^{(j+1)(1-4s)}\leq c\nu^{p_{1}(\epsilon)(1-4s)}\leq c\epsilon^{1-4s}.\]
In this case,  we conclude that \[\int_{B_{1}(0)}W(u_{\epsilon})dx\leq c\epsilon.\]
\end{itemize}
Combining the above discussions together, we can get the desired estimate.
\end{proof}
\bibliography{Stratification}
\bibliographystyle{plain}

\end{document}